\newtheorem{Proposition}{Proposition}
  \newtheorem{Lemma}[Proposition]{Lemma}
   \newtheorem{Theorem}[Proposition]{Theorem}
\newtheorem{Note}[Proposition]{Note}
\newtheorem{Definition}[Proposition]{Definition}
\newtheorem*{Corollary*}{Corollary}
\title{Uniformization and Constructive Analytic Continuation of Taylor Series}
\author{Ovidiu Costin and Gerald V. Dunne \\~\\
{\small{\it Department of Mathematics, The Ohio State University, Columbus, OH 43210-1174, USA}}\\
\small{\it Department of Physics, University of Connecticut, Storrs, CT 06269-3046, USA}}
\def\hhref#1{\href{http://arxiv.org/abs/#1}{arXiv:#1}} 
\newcounter{minisection}
\newcounter{minisection1}
\def\z{\noindent}
\definecolor{clemsonorange}{HTML}{EA6A20}
\def\RR{\mathbb{R}}
\def\CC{\mathbb{C}}
\def\NN{\mathbb{N}}
\def\ZZ{\mathbb{Z}}
\def\TT{\mathbb{T}}
\def\DD{\mathbb{D}}
\def\<{\langle}
\def\({\left (}
 \def\){\right)}
\def\epsilon{\varepsilon}
\def\phi{\varphi}
\def\le{\leqslant}
\def\ge{\geqslant}
\def\>{\rangle}
\def\0{\emptyset}
\date{}
\begin{document}
\maketitle
\begin{abstract}

We analyze the problem of global reconstruction  of functions as accurately as possible,  based on {\it partial} information in the form of a truncated power series at some point, and additional analyticity properties. This situation occurs frequently in applications. 
  The question of the optimal procedure was open, and we formulate it as a well-posed mathematical problem.   Its solution leads to a  practical method which provides  dramatic accuracy improvements over existing techniques. Our procedure  is based on uniformization of Riemann surfaces. As an application, we show that our procedure can be implemented for solutions of a wide class of nonlinear ODEs.
We find a new uniformization method, which we use to construct the uniformizing maps needed for special functions, including solution of the Painlev\'e equations $P_{\rm I}$--$P_{\rm V}$.

We also introduce a new  rigorous and constructive method of regularization, {\it elimination of singularities} whose position and type are known. If these are unknown, the same procedure enables a highly sensitive resonance method to determine the  position and type of a singularity.

In applications where less explicit information is available about the Riemann surface, our approach and techniques  lead to new approximate, but  still much more precise reconstruction methods than existing ones, especially in the vicinity of singularities, which are the points of greatest interest.

\vspace{0.2cm}

\z{\bf MSC} codes:  30F20, 30B30, 30B10, 40A25, 41A58
\end{abstract}

\section{Introduction}
\label{sec:intro}

In problems of high complexity in mathematics and physics it is often the case that a solution can only be generated as a {\it finite} number $n$  of terms of a perturbation series at certain special points, convergent or, more often, divergent but generalized Borel summable.
Under these circumstances, we ask what is the {\it optimal} strategy to approximate the underlying function, and if optimality cannot be achieved in practice, what are the most efficient near-optimal methods?  The mathematical question of optimality was open, and here we prove a result that is of practical interest as well as being highly accurate. On a practical level, this reconstruction question has been encountered in many problems in the literature, and dealt with in various problem-specific ways \cite{fisher,baker,guttmann,ZinnJustin:2002ru,caliceti,caprini}, not necessarily with optimal accuracy and without a rigorous mathematical foundation.

Clearly, without any further information about a function $F$, the question of optimally reconstructing it from its truncated series is ill posed. However, we show here that the question is well-posed within  generic classes of functions analytic on a given Riemann surface $\Omega$, and with a given rate of growth on $\Omega$.  

The type of information we need to achieve optimality ($\Omega$ and the rate of growth) is known a priori in the {\em Borel plane}  of solutions of generic linear or nonlinear meromorphic ODEs or difference equations, of classes of PDEs including the one-particle time-independent or time-periodic Schr\"odinger equation and many more.  This is a result of  \'Ecalle's pioneering theory of resurgent functions \cite{Ecalle}. Furthermore, resurgence  theory provides a wealth of a priori information about the Borel plane singularity structure \cite{Ecalle,Berry,costin-book,Sauzin,Garoufalidis:2010ya,kontsevich,gokce}.

The information required by our methods is also known or conjectured in many physics models such as in quantum mechanics, random matrix theory, quantum field theory, string theory  \cite{voros,delabaere,Delabaere,Berry,ZinnJustin:2004ib,marino-matrix,Gaiotto:2009hg,Garoufalidis:2010ya,Aniceto:2011nu,Dunne:2012ae,Dunne:2016nmc,Dorigoni:2015dha,Aniceto:2015rua,Gukov:2016njj}. The new analysis tools introduced in this paper can be used  to corroborate or refine such conjectural information  with high precision.

Finally, if the information is unknown, then it can be found empirically (nonrigorously) to very high precision by the methods we introduce.

Relative to this information, in this paper we find:
 \begin{itemize}
 	\item[--]
 	The explicit optimal reconstruction formula of a function, given  $n$ coefficients of its Maclaurin series (Theorem \ref{T1},  in \S \ref{sec:optimal}). This optimal reconstruction  is based on uniformization maps of Riemann surfaces.
The accuracy of the optimal procedure is often dramatically better than currently used methods. Some examples are discussed in \S\ref{Saccsing}, \S\ref{S:ExampleP1} and \S\ref{S:Applic}.

 	\item[--] Uniformization formulas for Riemann surfaces commonly encountered in applications (\S\ref{sec:borel-odes}).
 	
 	\item[--] A singularity elimination method transforming singularities whose position and type are known into regular points. After a singularity is eliminated, local Taylor series map back into the singular expansions of interest (\S\ref{sec:elim}).  If the singularity is unknown, the same procedure gives rise to a highly sensitive resonance method to determine the position and type of a singularity; this will be discussed in a separate paper.
 	
 	\item[--] Refinements of classical techniques such as Pad\'e approximants (\S\ref{S:empirical}).
 \end{itemize}
In mathematics, some of the major questions of interest where our methods bring substantial improvement are  in: 

\begin{itemize}

\item[--] Reconstructing functions globally from their asymptotic expansions at a point.

\item[--] Determining the singularities on higher Riemann sheets.
	
\item[--] Finding local expansions at singularities, including on higher Riemann sheets to determine precise connection formulae.
	
\item[--] Relatedly,  accurately calculating Stokes constants.
	
\item[--] Extrapolating additional Maclaurin coefficients, beyond the given number $n$.
	
\end{itemize}

In physics applications, important types of questions to be answered are:
\begin{enumerate}
\item
Where are the singularities of $F$, if the Riemann surface is unknown?

\item
What is the local behavior
at these singularities?
\item
How far can one explore the full Riemann surface of $F$ as a function of $n$?
\item
Can one quantify the expected precision locally, especially near the singularities, as a function of how much input data (and of what precision) is given?  What is the possible accuracy of reconstruction, especially near singularities, again as a function of $n$?
\end{enumerate}
The answers to these questions have both theoretical and practical consequences.
Question 1 corresponds to identifying critical points or saddle points (e.g. for asymptotics and phase transitions). Question 2 refers to determining whether these singularities are algebraic or logarithmic branch  points (or in special simple cases, poles), or essential singularities. A common application in statistical physics and quantum field theory is the accurate determination of critical exponents \cite{ZinnJustin:2002ru}. Questions 2 and 3 involve for example the numerical determination of Stokes constants, or wall-crossing formulas \cite{Gaiotto:2009hg}, or generally the fluctuations about a given critical point. Question 4 is of particular practical value, since in nontrivial applications it is often difficult to generate many terms of the original series.

 {\em Overview of the paper.} In \S \ref{sec:optimal} we prove the optimality theorem, Theorem \ref{T1}. In \S \ref{sec:borel-odes} we construct the explicit uniformization of the Riemann surfaces of the Borel plane of ODEs, both linear and non-linear. 
We also find a new and constructive uniformization procedure, which is geometric in nature. Theorem \ref{Th:compo} (in \S\ref{sec:composition}) expresses this uniformization in terms of an infinite composition of elementary maps, whose truncations provide accurate {\it approximate} uniformization  by elementary conformal maps. 
In \S \ref{sec:P1-unif} we give the uniformization of the 
Borel plane of the Riemann surface of {\it tronqu\'ee} solutions of the Painlev\'e equations $P_{\rm I}$--$P_{\rm V}$.  This procedure extends to more general nonlinear ODEs with sufficient symmetry properties. We  illustrate the dramatic gains in precision with some examples: see \S \ref{sec:t1discussion} and \S \ref{Scoefextrap}.

The use of uniformizing maps is a practical tool (and perhaps the only one that does not require full knowledge about the function) to eventually access all Riemann sheets. This kind of information is difficult to obtain numerically in other ways (cf. \S\ref{sec:odes} and \S\ref{sec:P1-unif}). 

Constructing {\it exact} uniformization maps for more complicated Riemann surfaces can be a challenging task, but we show that approximate maps also lead to dramatic improvements in the precision of the reconstruction of the function $F$.
In such cases 
the procedure in Theorem \ref{T1} can be adapted to extract information  using simpler maps with near-optimal precision in restricted regions of $\Omega$, for example near the singularities or boundaries.
Discrete  singularities and natural boundaries are usually the most important regions to study.

In \S \ref{sec:elim} we prove a {\it Singularity Elimination} theorem (Theorem \ref{thelim}), a rigorous construction of invertible linear operators that regularize a chosen singularity, transforming it into a point of analyticity. 
New approximate and exploratory numerical methods that can be used to probe the singularity structure of the function to be reconstructed are the object of \S\ref{S:empirical}.
We combine our methods with Stahl's fundamental analysis and results on Pad\'e approximation \cite{Stahl}, to propose methods for {\em approximate uniformization}, resulting in significantly improved analytic continuation methods. 
In applications, the methods we present can be used as precise ``discovery tools'' to explore empirically Riemann surfaces even when they are not known {\it a priori}, or are conjectured and need numerical validation.  \S\ref{S:Applic} contains some applications of our new methods, and the Appendix \S \ref{sec:app} lists some special conformal maps which are used in our analysis.

{ \em Illustrative examples.} Throughout the paper we provide examples showing the power of these optimal (and almost-optimal) methods.
In \S\ref{Scoefextrap} we consider functions analytic on the Riemann surface  in Borel plane of special functions solving linear ODEs  for which a Maclaurin polynomial  is given, truncated at degree 8, $P_9$ ($9$ possibly nonzero coefficients). From this truncated polynomial $P_9$,  one can extrapolate to the polynomial $P_{481}$ with relative errors in the coefficients of at most $\sim 0.1\%$. And from the same $P_9$  the value of the function at points on a circle of radius $3R$ (where $R$ is the radius of convergence of the Maclaurin series) is calculated with errors $<10^{-6}$, whereas Pad\'e approximants give 100\% errors at about $1.2R$. Pad\'e approximants, usually quite suboptimal, prove an important phenomenon: the mere existence of a larger domain of analyticity, without a priori knowledge about it, allows for series extrapolation, cf. \S\ref{Scoefextrap}. The improvement is proportional to the  ``size'' of $\Omega$ (for Pad\'e $\Omega$ is of course always a domain in $\CC$). 

The efficacy of analytic continuation is seen in  \S\ref{sec:odes} where the polynomial $P_{200}$ of an elliptic integral gives access to singularities located on many tens of sheets of its Riemann surface, cf. Fig. \ref{fig:Ksurface}. We are not aware of any other method that can achieve that.

The global reconstruction efficacy is illustrated in  \S\ref{S:ExampleP1} on the tritronqu\'ee solution of Painlev\'e $P_{\rm I}$ where the pole sector centered on $\RR^-$ is reconstructed from 200 terms of the asymptotic expansion at the opposite end, $+\infty$, accurately recovering the 66 poles closest to the origin, the first one with more than 60 digits of accuracy. 

In  \S\ref{Saccsing} it is seen that the power of the optimal method is even more dramatic near singularities, i.e., near $\partial\Omega$. As seen in Fig. \ref{fig:Ksurface-loops}, optimally used, $P_{200}$ gives access to a neighborhood of size $\sim 10^{-52}$ of a singular point, to find the type of the singularity and the local behavior. For the same neighborhood without any optimization one would need $P_{10^{52}}$, which is clearly impossible to obtain and/or use.

Note \ref{unifp1}.\ref{allsheets} shows that in a precise sense, with the optimal method one can see the influence of infinitely deep/high sheets of $\Omega$. Using $P_{200}$ for the Borel transform of the tronqu\'ee solutions of $P_{\rm I}$, we detect exponential behavior on $\Omega$, which is known to exist, indeed, only on infinitely deep/high sheets of $\Omega$.

There is a wealth of information that can be decoded from a given Maclaurin polynomial. Further illustrative examples of the precision achieved by our methods are given in  \S\ref{sec:odes}, \S\ref{sec:t1discussion}, \S\ref{Scoefextrap} and \S\ref{S:Applic}.

\subsection{Settings and Notations of the Paper}
\label{sec:summary}

In the following, $\DD_r(a)$ denotes the open disk of radius $r$, centered at $a$. The unit disk centered at the origin appears often in the discussion, and is simply denoted $\DD=\DD_1(0)$, with boundary the unit circle: $\TT=\partial\DD$. The Riemann sphere is written as $\hat{\CC}=\CC\cup\{\infty\}$.

We consider functions defined on  $\Omega$,  a simply connected Riemann surface.\footnote{Non-simply connected Riemann surfaces are uniformized on $\CC/\Gamma$ or $\DD/\Gamma$, where $\Gamma$ is a discrete group of automorphisms, see e.g. \cite{Schlag}. At this stage it is unclear to us how to take advantage of the factorization and in such a case we take instead $\Omega$ to be the universal covering.\label{ftn4}} An important special case in applications is $\Omega$ being a simply connected domain strictly contained in $\CC$, in which case  the uniformization map is its usual Riemann conformal map to $\DD$.
\begin{Definition}\label{LD1}
  {\rm
      We denote by $\psi$ the conformal map of $\Omega$ onto $\DD$, uniquely specified by the normalization $\psi(0)=0,\psi'(0)>0$,  and we write the inverse map $\psi^{-1}=\phi$, the covering map of $\Omega$.  See Figure \ref{fig:maps}.

}
\end{Definition}

 \begin{figure}\centering 
	\includegraphics[scale=0.75]{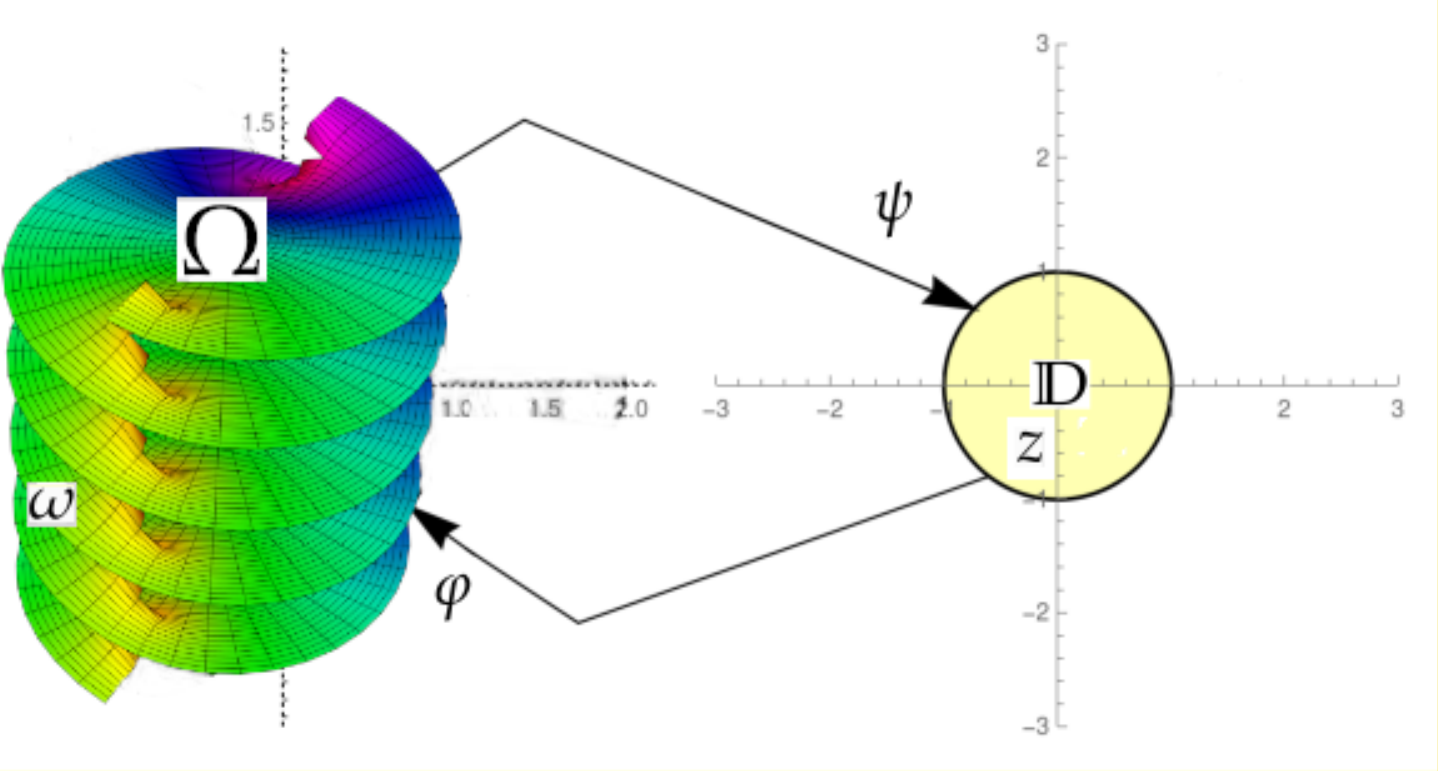}
	\caption{
		The map $z=\psi(\omega)$ from the simply connected Riemann surface $\Omega$ (here the universal cover of $\hat\CC\setminus\{\pm i,\infty\}$) to the unit disk $\DD$, and its inverse $\omega=\phi(z)$.} 
		\label{fig:maps}
	\end{figure}

\begin{Note}\label{NoteN1}{\rm 
	\begin{enumerate}
		\item\label{N2i} 	 For many functions of interest, the point of expansion, say zero, may become singular on other Riemann sheets.  
		In this special case,  $\Omega$ is described by equivalence classes of curves {\bf originating at zero, modulo homotopies in $\hat \CC\setminus S$},\footnote{Using $\hat{\CC}$ is a standard convention, since it makes a counting difference for the analyzed functions if infinity is singular or not. For instance, $\ln[(1-\omega)/(1+\omega)]$ is analytic at infinity and its Riemann surface is uniformized on the plane, after a M\"obius change of variable.}  where $S$ is a discrete set  that may contain zero. We shall call such Riemann surfaces coverings with fixed origin. The disk of analyticity at zero will be normalized to $\DD$.
		
	\item In view of \ref{N2i}, $ \Omega$ will be assumed to {\bf contain $\DD$  strictly}. More precisely, if $\Omega$ is uniformized to $\DD$ by $\psi$, then $\psi$ is analytic in $\DD$ and $\psi(0)=0$.
	\end{enumerate}

  }\end{Note}
  \begin{Definition}\label{DOmega()}
    { \rm We denote by $\Omega\left(\hat\CC\setminus P\right)$  Riemann surfaces described by homotopy classes over $\hat\CC\setminus P$, where $P$ is a discrete set of punctures. When the curves have fixed origin as in Note \ref{NoteN1}, say at $0$, the Riemann surfaces are denoted by $\Omega\left(\hat\CC\setminus\{(0),P\}\right)$.

  As an example, an elementary function that lives on $\Omega(\hat\CC\setminus\{(0),1,\infty\})$ is $\log(\omega^{-1}\log(1-\omega))$:  on the first sheet it has two branch points, $\{1, \infty\}$,  and three branch points,  $\{0, 1, \infty\}$, on all other sheets. This is also the Riemann surface of the complete elliptic integral of the first kind $F(\omega)=\mathbb K(\omega)$,  which is a test case analyzed in Sections \ref{sec:odes}, \ref{sec:t1discussion}, \ref{Scoefextrap}, \ref{sec:sing-elim-ex}.}

  \end{Definition}

By the uniformization theorem, $\Omega$ is biholomorphically equivalent to exactly one of the following: $\DD, \CC$ or $\hat{\CC}$ (see, e.g. \cite{Abikoff,Ablowitz,Schlag}). Here, we mostly focus on Riemann surfaces uniformized on $\DD$, as the latter two cases are too special,  occurring only in the simplest cases  \cite{Abikoff}, and also because their analysis would follow similar steps.

It will at times be convenient to consider  shifts of sets in $\CC$, in which case we write 
$S+\zeta=\{\omega+\zeta:\omega\in S \subset \CC\}$, and also to work with an inverted variable, changing the expansion point from $\omega=0$ to $\omega=\infty$,  in which case we write $1/S=\{1/\omega:\omega\in S \subset \CC\}\subset \hat{\CC}$.

In our discussion of resurgent functions, the singularities assume a simple form (cf. \cite{Duke})
\begin{equation}
 (\omega-\omega_0)^{\alpha} A(\omega)+B(\omega);\ \alpha\in \CC\setminus \ZZ;\ \ \text{or, for } \alpha \in \ZZ,\ \ \  \frac{d^k}{d\omega^k}[(\omega-\omega_0)^{\alpha}\ln (\omega-\omega_0) A(\omega)]+B(\omega)
   \label{eq:res-sing}
\end{equation}
 for some $\alpha \in\CC$, $k\in\NN$, and where  $A,B$ are analytic at $\omega_0$ and $A(\omega_0)\ne 0$. In this paper we refer to singularities of the type in (\ref{eq:res-sing}) as {\it elementary singularities}. With this notation, important goals of our analysis are to learn as much as possible about the singularity locations $\omega_0$, the singularity exponent $\alpha$, and the associated local functions $A(\omega)$ and $B(\omega)$.

\section{Optimal Reconstruction}
\label{sec:optimal}
We start with a discussion of the underlying question and the various ideas involved in optimal reconstruction. We place ourselves in a frequently encountered setting in which a Maclaurin polynomial $P_n$  is given, together with the underlying type of Riemann surface $\Omega$, combined with some a priori weighted bounds:
  \begin{equation}
    \label{eq:space1}
    \mathcal{F}_{P_n,W}=\{F \text{ analytic in } \Omega|F_n=P_n+o(z^n);\ \|F\|_W:=\|F W\|_{\infty}<\infty\} 
  \end{equation}
  Here we consider weights that may allow {\em growth} of the functions involved; thus, $W:\Omega\to (0,1]$. Furthermore, it is natural to consider weights that depend on the {\em conformal distance to the boundary},  $\rho(\omega):=1-|\psi(\omega)|$. Then, $W(\omega)=W(\rho(\omega))$.
  \begin{Definition}
  \label{def:ck}
    {\rm  We denote $P_n(z)= \sum_{k=0}^{n-1}p_k z^k$, and $\phi^{(k)}(0)/k!=c_k$.} 
  \end{Definition}
  \begin{Note}\label{Nlocal}
 {\rm   Any approximant based on $P_n$ at a point $\omega_0\in \Omega$ for a class of functions $\mathcal{F}_{P_n,W}$, is a function
$R_n=R_n\left(\omega_0\right)$
   and the {\bf best approximant} $\hat{R}_n(\omega_0)$,  in the class $\mathcal{F}_{P_n,W}$, is defined  by minimizing the scaled quantity
  \begin{equation}
    \label{eq:defoptimal}
 \sup_{F\in\mathcal{F}_{P_n,W}} \frac{|F(\omega_0)-R_n(\omega_0)|}{\|F\|_{W}} 
  \end{equation}
  over the set of all possible $R_n$s.  
 } \end{Note}

The question is thus to reconstruct $F$ at any $\omega_0\in \Omega$ with optimal accuracy  in the class $\mathcal{F}_{P_n,W}$. Relatedly, the same question is important in contexts when only partial information about $\Omega$ and bounds is available. The optimality questions are relative to the {\bf whole} class $\mathcal{F}_{P_n,W}$.

As a mathematical question, this is one of {\it inverse approximation theory}, in the sense that here the approximation is fixed, in the form of a number $n$ of terms of a series, and the underlying function $F$ is to be reconstructed as accurately as possible. 

Theorem \ref{T1} shows that the best approximant is given, using the notations of \S\ref{sec:summary} (and recall Figure \ref{fig:maps}), by the composition
  \begin{equation}
    \label{eq:optimalappr}
    \hat{R}_n=(P_n\circ\phi)_n\circ\psi
  \end{equation}

 It {\em may seem paradoxical} that, in order to extract the most information from $P_n$, Theorem \ref{T1} shows that some information needs to be {\bf discarded} (by the truncation $P_n\circ\phi\mapsto (P_n\circ\phi)_n$); at a heuristic level, this is explained in Note \ref{NExpGrowth}, below.

Assume for the moment that our weight is $W=1$, and let us analyze the ball of a given radius $A$ compatible with $P_n$, say  $\mathcal{F}_{A}=\{F\in\mathcal{F}_{P_n,\infty}|\|F\|_{\infty}\le A\}$. Then, for $F\in \mathcal{F}_A$, the maximal error at a point $\omega_0\in \DD\subset \Omega$ obtained using $P_n$ as an approximant is (using Cauchy estimates) bounded by
  \begin{equation}
    \label{eq:origerr}
|F(\omega_0)-P_n(\omega_0)|\le    |A|\frac{|\omega_0|^n}{1-|\omega_0|}
\end{equation}
while  Theorem \ref{T1} shows that for all $\omega_0\in \Omega$
\begin{equation}
    \label{eq:origerrcomposed}
|F(\omega_0)-(P_n\circ\phi)_n\circ\psi(\omega_0)|\le    |A|\frac{|\psi(\omega_0)|^n}{1-|\psi(\omega_0)|}
\end{equation}

This provides provides exponential improvement of accuracy:
\begin{Note}\label{LSchwarzLemma}
  {\rm   By Note \ref{NoteN1} \ref{N2i}., $\DD\subsetneq \Omega$ and hence $\psi(\DD)\subsetneq\DD$. By Schwarz's lemma, $|\psi(\omega)|< |\omega|$ for all $\omega\in\DD$; the factor  $|\psi(\omega)/\omega|$ (extended by $\psi'(0)$ at zero), plays the role of an {\em accuracy} {\bf acceleration modulus}. It is a crucial quantity throughout the analysis.

 Furthermore, while $P_n$ diverges outside $\DD$, we have instead $|F(\omega_0)-(P_n\circ\phi)_n\circ\psi(\omega_0)|\to 0$ as $n\to\infty$ {\bf throughout $\Omega$} (since $\psi(\Omega)=\DD$)}.
 	
\end{Note}
 
\begin{Note} [Monotonicity of $\phi'(0)$] \label{monot}{\rm 
When uniformization of the whole Riemann surface $\Omega$ is impractical, one should map to $\DD$ as much of $\Omega$ as possible. Indeed, if $\Omega_2\subset \Omega_1$, then $\displaystyle{ \DD\mathop{\to}^{\phi_2}\Omega_2\mathop{\to}^{\phi_1^{-1}}S\subset \DD}$. By Schwarz's lemma $\phi_2'(0)/\phi_1'(0)\le 1$ and hence $\phi'(0)$ is increasing in the size of $\Omega$.
			This also follows from the Optimality Theorem  \ref{T1} below.
}\end{Note}

\subsection{The Optimal  Reconstruction Theorem on a Riemann Surface} 
\label{sec:thm1}

Let $\Omega$ be a Riemann surface as in \S\ref{sec:summary} and $\omega_0\in \Omega$. The following result constructs and characterizes $\hat{R}_n(\omega_0)$, the best approximant, in the sense stated in  Theorem \ref{T1}, at $\omega_0$ within the class of functions $\mathcal{F}$ analytic on the same Riemann surface  $\Omega$ and a common Maclaurin polynomial $P_n$, which is our input data.
Recall the maps $z=\psi(\omega)$ and its inverse $\omega=\phi(z)$ in Figure \ref{fig:maps}. 

Theorem \ref{T1} shows that the best approximants are given, using the notations of \S\ref{sec:summary}, by the composition $\hat{R}_n=(P_n\circ\phi_n)_n\circ\psi$.    Part 2 of Theorem \ref{T1}  shows that with optimality even the constant in the optimal bound becomes sharp, if the functions are already ``well approximated by this procedure''. Part 3 allows for weighted bounds, that is for growth towards $\partial\Omega$.

\begin{Theorem}[{\bf The optimality theorem}]\label{T1} 
Let $\Omega$ be a Riemann surface as in \S\ref{sec:summary}, $\omega_0\in \Omega$, and  $P_n$ an $(n-1)$-order truncation of a Maclaurin series and let $\hat{R}_n= (P_n \circ \phi)_n\circ\psi$. We denote by ${\mathcal{F}}_{P}$ the set of bounded functions on $\Omega$ to which $P_n$ converges:
$$ \mathcal{F}_{P}=\left\{ F\in{\mathcal{F}}:  {|| F\|_{\infty}}<\infty, \text{ and }F(\omega)-P_n(\omega)=O(\omega^{n}) \text{ as }\omega\to 0\right\}$$
Let $\omega_0\in \Omega$. Then,
\begin{enumerate}
     \item For $F\in \mathcal{F}_P$ we have
    \begin{equation}
      \label{eq:CauchyEst}
     \frac{ |F(\omega_0)-\hat{R}_n(\omega_0)|}{  \|F\|_{\infty}}\le \frac{|\psi(\omega_0)|^n}{1-|\psi(\omega_0)|}
      \end{equation}

 For every $R\in\CC$ and $\delta>0$ there exists $F_\delta\in \mathcal{F}_{P}$ so that 
      \begin{equation}
    \label{eq:optimal-extrapolation}
   \frac{|F_\delta(\omega_0)-R|}{\|F\|_{\infty}}\ge |\psi(\omega_0)|^n(1-\delta)
  \end{equation}
In this sense the reconstruction $\hat{R}_n$ is optimal.
\item Furthermore, for $\epsilon>0$ let
$$\mathcal{F}_\epsilon= \{F\in \mathcal{F}_{P}: F(\omega_0)\ne 0 \quad \text{and}\quad |F(\omega_0)|^{-1}|F(\omega_0)-\hat{R}_n(\omega_0)|\le \epsilon \}$$ 
We have
\begin{equation}\label{isepsi}
  \sup_{F\in \mathcal{F}_\epsilon} \frac{|F(\omega_0)-\hat{R}_n(\omega_0)|}{|F(\omega_0)|}=\epsilon
  \end{equation}
  
  Assume $n$ is large enough so that $(1-|\psi(\omega)|)^n<\epsilon$. Then for every $R\in \CC$ and every $\delta>0$ there exists an 
  $F_\delta\in \mathcal{F}_\epsilon$ so that 
  \begin{equation}
    \label{eq:optimal-extrapolation2}
   \frac{|F_\delta(\omega_0)-R|}{|F_\delta(\omega_0)|}\ge \frac{1-\delta}{1+2\epsilon} \, \frac{|F_\delta(\omega_0)-\hat{R}_n(\omega_0)|}{|F_\delta(\omega_0)|}
  \end{equation}
In this sense the reconstruction $\hat{R}_n$ is optimal, also including constants.
\item Let $W=W_1\circ\psi$, where $W_1:[0,1)\to \RR^+$ (a weight depending on the natural metric distance ``to the boundary''). 
With  $\|\cdot\|_W$ defined as in (\ref{eq:space1}), 
let $\mathcal{F}_{W}$ be the family of functions $F$ analytic in $\Omega$ and such that $\|F\|_{W}<\infty$. 
Then,
  \begin{equation}
    \label{eq:generalestimate}
  \frac{ |F(\omega_0)-\hat{R}_n(\omega_0)|}{  \|F\|_{W}}\le |\psi(\omega_0)|^n \inf_{r\in(|\psi(\omega_0)|,1)} \frac{1}{W(r)r^{n-1}(r-|\psi(\omega_0)|))}
\end{equation}
and  for every $R\in\CC$ and $\delta>0$ there exists $F_\delta\in\mathcal{F}_{P_n,W}$
such that 
      \begin{equation}
        \label{eq:optimal-extrapolation3}
   \frac{|F_\delta(\omega_0)-R|}{\|F\|_{W}}\ge |\psi(\omega_0)|^n\inf_{r\in(|\psi(\omega_0)|,1)} \frac{(1-\delta)}{W(r)r^{n}}
  \end{equation}
   \end{enumerate}
 \end{Theorem}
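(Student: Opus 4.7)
The plan is to transport the problem to the unit disk via the uniformization map. Setting $G := F\circ\phi$, we obtain an analytic function on $\DD$ with $\|G\|_\infty = \|F\|_\infty$ since $\phi:\DD\to\Omega$ is a biholomorphism. The hypothesis $F - P_n = O(\omega^n)$ at $\omega=0$ together with $\phi(0)=0$ gives $G - P_n\circ\phi = O(z^n)$, so the degree-$(n-1)$ Taylor truncations agree: $G_n = (P_n\circ\phi)_n$. Writing $z_0 := \psi(\omega_0)$ and noting $G(z_0) = F(\omega_0)$, we have $\hat{R}_n(\omega_0) = G_n(z_0)$, so the entire estimation problem reduces to bounding $|G(z_0) - G_n(z_0)|$ on $\DD$, which is classical.

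For the upper bound in Part~1, Cauchy's estimate gives $|G^{(k)}(0)/k!| \le \|G\|_\infty$ (sending $r\to 1^-$ on $|z|=r$), and summing the geometric tail $\|G\|_\infty\sum_{k\ge n}|z_0|^k$ yields (\ref{eq:CauchyEst}). For the optimality claim, given $R\in\CC$ I would construct the extremal family $F_M(\omega) := \hat{R}_n(\omega) + Me^{i\theta}\psi(\omega)^n$, with $\theta$ chosen so that $Me^{i\theta}z_0^n$ has the same argument as $\hat{R}_n(\omega_0)-R$ (arbitrary phase if they coincide). Then $F_M\in\mathcal{F}_P$: analytic on $\Omega$, bounded since $|\psi|<1$, and with first $n$ Taylor coefficients at $0$ matching $P_n$ because $\psi(\omega)^n = O(\omega^n)$. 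By the maximum principle, $\|F_M\|_\infty = \sup_{|z|=1}|(P_n\circ\phi)_n(z) + Me^{i\theta}z^n| = M + O(1)$ as $M\to\infty$, while the alignment gives $|F_M(\omega_0) - R| = M|z_0|^n + |\hat{R}_n(\omega_0)-R|$. The ratio therefore tends to $|z_0|^n$, and any sufficiently large $M$ secures the factor $(1-\delta)$ in (\ref{eq:optimal-extrapolation}).

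Part~3 follows the same pattern with a weighted Cauchy estimate on the circle $|z|=r$: since $W$ depends only on $|\psi|$, we have $|G(z)| \le \|F\|_W/W(r)$ there, hence $|G^{(k)}(0)/k!| \le \|F\|_W/(W(r)r^k)$, and summing then taking $\inf_r$ produces (\ref{eq:generalestimate}). The matching lower bound (\ref{eq:optimal-extrapolation3}) comes from the same family $F_M$, using $\|F_M\|_W = M\sup_{r\in(0,1)} r^n W(r) + O(1)$ as $M\to\infty$ together with the identity $\inf_r 1/(W(r)r^n) = 1/\sup_r W(r)r^n$.

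For Part~2, (\ref{isepsi}) splits in two: the direction $\le$ is the definition of $\mathcal{F}_\epsilon$, while $\ge$ is realized by $F = \hat{R}_n + t\psi^n$ with $|t|$ tuned so that $|tz_0^n|/|\hat{R}_n(\omega_0)+tz_0^n|=\epsilon$ exactly, which exists by intermediate value (the ratio runs from $0$ to $1$ as $|t|$ grows, assuming the non-degenerate case $\hat{R}_n(\omega_0)\ne 0$). The harder piece (\ref{eq:optimal-extrapolation2}) is where I expect the main obstacle: the $\epsilon$-constraint confines $F(\omega_0)$ to an Apollonius disk around $\hat{R}_n(\omega_0)$, so one cannot send $M\to\infty$ as in Part~1. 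The plan is to again take $F_\delta = \hat{R}_n + t\psi^n$ saturating the $\epsilon$-constraint and to choose the phase of $t$ so that $\hat{R}_n(\omega_0)$ lies between $F_\delta(\omega_0)$ and $R$ along the relevant ray; the triangle inequality $|F_\delta(\omega_0)-R| \ge |F_\delta(\omega_0)-\hat{R}_n(\omega_0)| - |\hat{R}_n(\omega_0)-R|$ combined with $|F_\delta-\hat{R}_n|\le\epsilon|F_\delta|$ then produces the advertised factor $(1-\delta)/(1+2\epsilon)$, after a short computation that uses the hypothesis $(1-|\psi(\omega_0)|)^n<\epsilon$ to control the residual term.
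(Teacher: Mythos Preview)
Your approach is essentially the paper's: transport to $\DD$ via $\phi$, use Cauchy bounds for the upper estimates, and the extremal family $\hat{R}_n + \lambda\psi^n$ for the lower ones. For the upper bound in Part~1 you sum termwise coefficient bounds into a geometric tail, while the paper applies the Cauchy integral formula
\[
f(z_0)-\hat{R}_n(z_0)=\frac{z_0^n}{2\pi i}\oint \frac{s^{-n}f(s)}{s-z_0}\,ds
\]
on $|s|=r\to 1^-$; these are equivalent. Parts~1 and~3 are fine.

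Your handling of (\ref{eq:optimal-extrapolation2}) has a wobble. You correctly take $F_\delta = \hat{R}_n + t\psi^n$ with the phase aligned, but then invoke the reverse triangle inequality $|F_\delta - R| \ge |F_\delta - \hat{R}_n| - |\hat{R}_n - R|$, which is useless: $R$ is arbitrary, so $|\hat{R}_n - R|$ can swallow everything. What your own phase-alignment remark actually gives is the \emph{equality} $|F_\delta(\omega_0) - R| = |\hat{R}_n(\omega_0) - R| + |tz_0^n| \ge |tz_0^n|$ when $\arg(tz_0^n) = \arg(\hat{R}_n(\omega_0) - R)$, and this is precisely what the paper uses. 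With $|tz_0^n| = \epsilon|\hat{R}_n(\omega_0)|/(1+\epsilon)$ one checks $F_\delta\in\mathcal{F}_\epsilon$ and $|F_\delta(\omega_0)| \le |\hat{R}_n(\omega_0)|(1+2\epsilon)/(1+\epsilon)$, giving $|F_\delta - R|/|F_\delta| \ge \epsilon/(1+2\epsilon)$ directly. The hypothesis $(1-|\psi(\omega_0)|)^n < \epsilon$ is not used in the paper's argument, and your attempt to invoke it ``to control the residual term'' is a red herring---drop it.
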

\begin{Note}{\rm 
    \begin{enumerate}
    \item Observe that the map $F\mapsto F\circ\phi$  is an isometric isomorphism taking the space of analytic functions in  $\Omega$ onto the space of analytic functions in $\mathbb{D},$\footnote{Analyticity follows from the fact that $\phi(\DD)=\Omega$ and $F$ is analytic in $\Omega$.}  $L^\infty(\Omega)$ onto $L^\infty(\mathcal{\DD})$, and $\mathcal{F}_W$ to $\mathcal{F}'_W$,  the space of functions in $\DD$ for which $\sup_{z\in\DD}|F\circ\phi(z)/W(|z|)|$ is finite.
      
    \item An immediate calculation shows that there is an explicit  bijection between the polynomials $F_n$ and $(F\circ\phi)_n$. Since polynomials are bounded in $\DD$, there is a simpler characterization of $\mathcal{F}_{P_n,W}$ when mapped to $\DD$:
      \begin{equation}
    \label{eq:space}
    \mathcal{F}_{P_n,W}=\{F|F\circ\phi=(P_n\circ\phi)_n+z^n g;\ g\text{\ analytic in }\DD; \ \|gW\|<\infty\} 
  \end{equation}
    
    \end{enumerate}
   }
\end{Note}

  \begin{proof}
    We take  $F_{\lambda}=(P_n\circ\phi)_n\circ\psi+\lambda \psi^n$. It is straightforward to check that the Maclaurin polynomial of $F_\lambda$ starts with $P_n$ and that $F_\lambda$ is bounded, implying $F_\lambda\in\mathcal{F}_{P_n,W}$. Taking $\omega\to \partial \Omega$ we have $|\psi(\omega)|\to 1$ and we see that $\lambda^{-1}\|F\|_{\infty}\to 1$. Hence,  for any approximant $R$ based on this data and any $\omega_0$ we have
  \begin{equation}
    \label{eq:no-furter-info}
    \|F_\lambda\|_{\infty}^{-1}|F_\lambda(\omega_0)-R|= \frac{|\lambda^{-1} P_n(\omega_0)+ \psi(\omega_0)^n-\lambda^{-1}R|}{\| \lambda^{-1}F\|_{\infty}} \mathop{\to}_{\lambda\to \infty} |\psi(\omega_0)|^n
  \end{equation}
  showing that, in the limit $\lambda\to \infty$, $|\psi(\omega_0)|^n$ is a lower bound of the approximation achievable by any $R$, proving \eqref{eq:optimal-extrapolation}.

  In fact, due to the biholomorphic bijection between $\Omega$ and $\DD$ we can map all needed inequalities back and forth between $\Omega$ and $\DD$. The domain $\DD$ is however simpler to handle, and we will map our questions to $\DD$  from this point on. We denote $\psi(\omega_0)=z_0$ and $F\circ \psi=f$. Mapped to $\DD$, our space is
$$\{f:f\circ\phi\in\mathcal{F}_{P_n,W}\}$$

      The inequality  \eqref{eq:CauchyEst} is simply obtained by taking the sup in the Cauchy formula
    \begin{equation}
      \label{eq:CauchyFormula}
      f(z_0)-\hat{R}(z_0)=\frac{z_0^n}{2\pi i}\oint\frac{s^{-n} f(s)}{s-z_0}ds
    \end{equation}
where the contour of integration is $\partial \mathbb{D}_{r}(0)$ with $r\in(|z_0|,1)$ and  letting $r\to 1$. 
 
To prove \eqref{isepsi}, it suffices to take $F(\omega)=\hat{R}_n(\omega)+c \psi(\omega)^n$, with $c=\epsilon \hat{R}(\omega_0)\omega_0^{-n}/(1-\epsilon)$. We again map the question to $\DD$.
 
  For \eqref{eq:optimal-extrapolation2}, consider the subfamily of $\mathcal{F}_\epsilon$ of functions of the form $f(z)=\hat{R}(z)+\alpha \tau z^n$, with $|\alpha|=1$ and $\tau=\epsilon \hat{R}(z_0)z_0^{-n}/(1+\epsilon)$ (included in $\mathcal{F}_\epsilon$ for small enough $\epsilon$), for which we have
 
 $$\frac{|f(z_0)-R|}{|f(z_0)|} = \frac{|\hat{R}(z_0)+\alpha\tau z_0^n-R|}{|\hat{R}(z_0)+\alpha \tau z_0^n|} \ge  \frac{|\hat{R}(z_0)+\alpha\tau z_0^n-R|}{|\hat{R}(z_0)|+| \tau z_0^n|}$$
 which, for $\alpha$ such that $\arg (\alpha\tau z_0^n)=\arg (\hat{R}(z_0)-R)$ is greater than
 $$ \frac{|\hat{R}(z_0)-R|+|\tau z_0^n|}{|\hat{R}(z_0)|+| \tau z_0^n|} \ge \frac{|\tau z_0^n|}{|\hat{R}(z_0)|+| \tau z_0^n|}=\frac{\displaystyle \frac{\epsilon\hat{R}(z_0)}{1+\epsilon}}{|\hat{R}(z_0)|\left(
     1+\frac{\epsilon}{1+\epsilon}  \right)} =  \frac \epsilon{1+2\epsilon}$$
 Combined with \eqref{isepsi}, this implies \eqref{eq:optimal-extrapolation2}.

For Part 3., we proceed in the same way as in Part 1., again mapping the questions to $\DD$. Using \eqref{eq:CauchyFormula} we get
\begin{equation}\label{eq:estiweight}
\frac{1}{\|f\|_W}\frac1{2\pi}\left|\oint \frac{s^{-n}f(s)ds}{s-z_0}\right| \le \inf_{r\in(|z_0|,1)}\frac{1}{r^{n-1}(r-|z_0|)W(r)}
\end{equation}
   where the contour of integration is $\partial \mathbb{D}_{r}(0)$ with $r\in (|z_0|,1)$. This proves  \eqref{eq:generalestimate}.

For \eqref{eq:optimal-extrapolation3} we proceed as in the proof of \eqref{eq:optimal-extrapolation} and  in the limit $\lambda\to \infty$ we obtain 
 $$ \frac{|z_0|^n}{\|z^n\|_{W}}=\frac{|z_0|^n}{\sup_{r\in [0,1)} r^{n} W(r)^{-1}}=|z_0|^n\inf_{r\in [0,1)}\frac{1}{W(r)r^{n}}$$

\end{proof} 
This is the optimal approximation that can be obtained based on this general information at some point $\omega_0\in\Omega$, as a function of $\omega_0$, and of the number $n$ of input Maclaurin coefficients.

\begin{Note}[All useful information in $P_n\circ\phi$ is contained in $(P_n\circ\phi)_n$.]\label{NExpGrowth}  
  {\rm  
    Restricting the analysis to a neighborhood of zero, we explain why,  in order to achieve optimality,  it is essential to discard information (by truncating  $P_n\circ\phi$ to $(P_n\circ\phi)_n$). The general case  follows from the proof of Theorem \ref{T1}. 

      We claim that any further terms of the Maclaurin series, as {\bf calculated} from $P_n\circ\phi$, lead to {\bf loss} of accuracy, in fact at a rate growing exponentially in $n$.
Indeed, assume $f\in \mathcal{F}_{P_n,W}$ and $\frac{1}{n!}f^{(n)}(0)=a_n$. A straightforward calculation shows that 
$$\frac{1}{n!}\left[(f\circ\phi)^{(n)}(0)-(f\circ\phi)_n^{(n)}(0)\right]=a_n c_1^n$$
     where we recall from Definition \ref{def:ck} that $c_1=\phi^\prime(0)$.
 As a consequence of Note \ref{LSchwarzLemma} and the normalization in Definition \ref{LD1} we have $c_1>1$ and hence, generically, $(f\circ\phi)_n^{(n)}(0)$ is ({\bf exponentially} in $n$) inaccurate. In other words anything beyond $(P_n\circ\phi)_n$ must, in general, be {\bf discarded}.

 In a deeper sense, the gain in accuracy is explained by the fact that the re-expansion is a representation in terms of functions for which the Cauchy-like contour {\em can be usefully pushed to the boundary of the whole Riemann surface}.
}\end{Note}

\begin{Note} Series coefficients extrapolation
\label{NCoeffExtrap}
{\rm In fact, this truncation also enables high accuracy extrapolation of the series expansion coefficients: see Section \ref{Scoefextrap}.}

  \end{Note}

\begin{Note}\label{N:02}
	{\rm    Optimality in Theorem \ref{T1} is relative to all functions analytic on $\Omega$ in a given class of bounds $W$.
		More detailed knowledge on the function to be reconstructed, as is available for resurgent functions ({\em e.g.} the concrete nature of their singularities, their behavior towards infinity along special curves in $\Omega$), can lead to significant further improvements: see Sections  \ref{sec:elim} and  \ref{S:empirical}. 
		To use the additional structure of the \'Ecalle alien calculus, one could use for instance his ``alien Taylor'' expansions described in \cite{Ecalle}, and the resurgence polynomials described in \cite{ETwisted}.}

\end{Note}

\begin{Note} Dependence on the class of bounds.
 \label{NBounds}
 {\rm  In some cases, {\em e.g.  the tritronqu\'ees of $P_{\rm I}$}, $W$ is significantly worse for the whole $\Omega$ than for a finite subcollection of its sheets. For $P_{\rm I}$ , $W$ is algebraic on finitely many sheets  whereas at infinity in $\Omega$ there is exponential growth, see Note \ref{unifp1}, \ref{N23ii}. Not far from zero, or if $n$ is large enough, complete uniformization is always optimal; indeed at any given $\omega_0$ the $W$ can influence at worst through a constant as seen on the left side of \eqref{eq:estiweight}. {\em If, however, $|z_0|$ is close to 1 and the degree of $P_n$ is not large enough}, then $W$  starts to matter, as seen in Theorem \ref{T1}.
 Thus, close to $\partial\DD$ with a "small" $n$, it may be beneficial to cut out whole regions of $\Omega$ from the analysis, if they are not needed, to improve $W$. One can use the formulas in the theorem to optimize over $n, z_0,W$ as well. 
			
			A specific way to truncate the Riemann surface is {\em partial uniformization} explained in \S\ref{S:partialU}.
		
}\end{Note}

\begin{Note} \label{logcap} Connection with logarithmic capacity. {\rm As mentioned in Note \ref{LSchwarzLemma}, the acceleration modulus $|\psi(\omega)/\omega|<1$ if $\Omega \supsetneq\DD$, and there is always improvement of accuracy. It clearly also follows that $|\psi'(0)|$, the acceleration modulus near zero is also $<1$. If $\Omega$ is a domain in $\CC$, this latter quantity has an interesting geometric interpretation:
		
Let	 $\omega_0\not\in\Omega$ and $K=1/(\Omega-\omega_0)$ the  reciprocal of the shifted domain (as defined in \S\ref{sec:summary}). If $K$ is compact, then the constant $|\psi'(0)|$ is the {\em logarithmic capacity (or trans-finite diameter) of $K$}: see \cite{Landkof,Ransford,Saff} and \S\ref{S51}-\S\ref{S52}.
}\end{Note}

\section{Uniformization of Riemann surfaces of the Borel plane  for ODEs}
\label{sec:borel-odes}

Uniformizing Riemann surfaces is a non-trivial problem, and  is generally not explicit. However, it turns out that the Riemann surface $\Omega$  can be uniformized in closed-form for the Borel plane of the solution to a large class of second order ODEs, both linear and nonlinear. This class includes the Painlev\'e equations P$_{\rm I}$-P$_{\rm V}$. Closed-form uniformization can also be achieved for higher-order ODEs having sufficient symmetry.

\subsection{Uniformization of the Riemann Surface for Linear Second Order ODEs}
\label{sec:odes}

 We begin by describing a particular uniformizing  conformal map (\ref{eq:Nome1})--(\ref{eq:inome0}) expressed in terms of the elliptic nome function.\footnote{Some properties of this map, other than its uniformizing features, are described in \cite{Nehari}, p. 323.} This  map is important for our analysis because it is used to construct the new uniformization of the Borel plane for Painlev\'e solutions, as discussed in section, \S\ref{sec:P1-unif}.
 In the next section, \S\ref{sec:composition}, we also present a new explicit construction of this map in terms of a rapidly convergent iterative composition of elementary conformal maps.

Consider  $\Omega(\hat\CC\setminus \{(0),1,\infty\})$ (see Note \ref{NoteN1} and Definition \ref{DOmega()}).
Then the conformal map $\psi:\Omega\to\DD$ is the {\em elliptic nome} function $q$:
\begin{equation}
  \label{eq:Nome1}
  z= \psi(\omega)=q(\omega)=e^{-\pi\frac{\mathbb{K}(1-\omega)}{\mathbb K(\omega)}}
\end{equation}
and  $\omega=\phi(z)$, where $\phi$ is the {\em inverse elliptic nome} function. We have for small $z$
\begin{equation}
  \label{eq:inome0}
 \omega= \phi(z)= 16 z-128 z^2+704 z^3+\cdots
\end{equation}

  \begin{proof} 
 We start with the well known uniformization over the upper half plane $\mathbb H$ of  $\Omega\left(\hat \CC \setminus\{0,1,\infty\}\right)$ by $\tau(\omega)= i\mathbb{K}(1-\omega)/\mathbb K(\omega)$, see \cite{Bateman}, p 99. Note that $q=e^{\pi i \tau}$.  The map $\tau$ is also conformal between $\mathbb H$ and the geodesic triangle $\Delta$, see Fig. \ref{FNome}. The inverse function,  $\lambda =\tau^{-1}$ is the modular elliptic function.
  \begin{figure}[h!]
  \centering
  \includegraphics[scale=0.9]{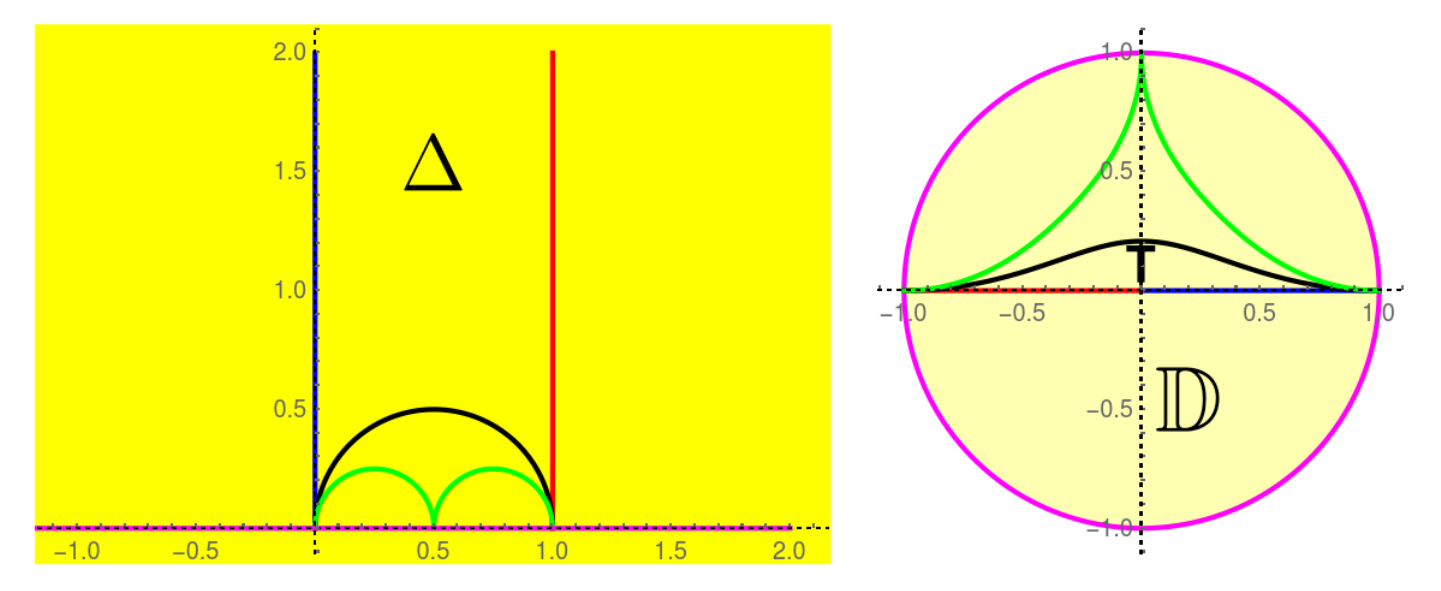}
   \caption{Images of geodesic triangles in the upper half plane $\mathbb{H}$ through $h(\omega)=\omega\mapsto e^{\pi i \tau(\omega)}$. 
   Like-colored pieces of the boundaries correspond to each other through $h$. }
   \label{FNome}
  \end{figure}
  The set  $T=e^{\pi i \tau(\Delta)}$ is the curvilinear triangle $T$ in Figure \ref{FNome},  where the upper side  is an analytic curve.  We aim to show that $ W=q^{-1}$ extends analytically to $\DD$ by successive Schwarz reflections of $T$, and its reflections, across their sides. We also note that with $\omega\in T$ we have $W(z)=\lambda(\frac{1}{\pi i}\log z)$. Since $\lambda$  is analytic in the upper half plane, $W$ admits analytic continuation through the rays above. Using the periodicity property $\lambda(t+2)=\lambda(t)$ we see that the monodromy of $W$ around zero is trivial. Furthermore, the product representation of $\lambda$ for $\Im \omega>0$,
   \begin{figure}[ht!]
  \centering
  \includegraphics[scale=0.7]{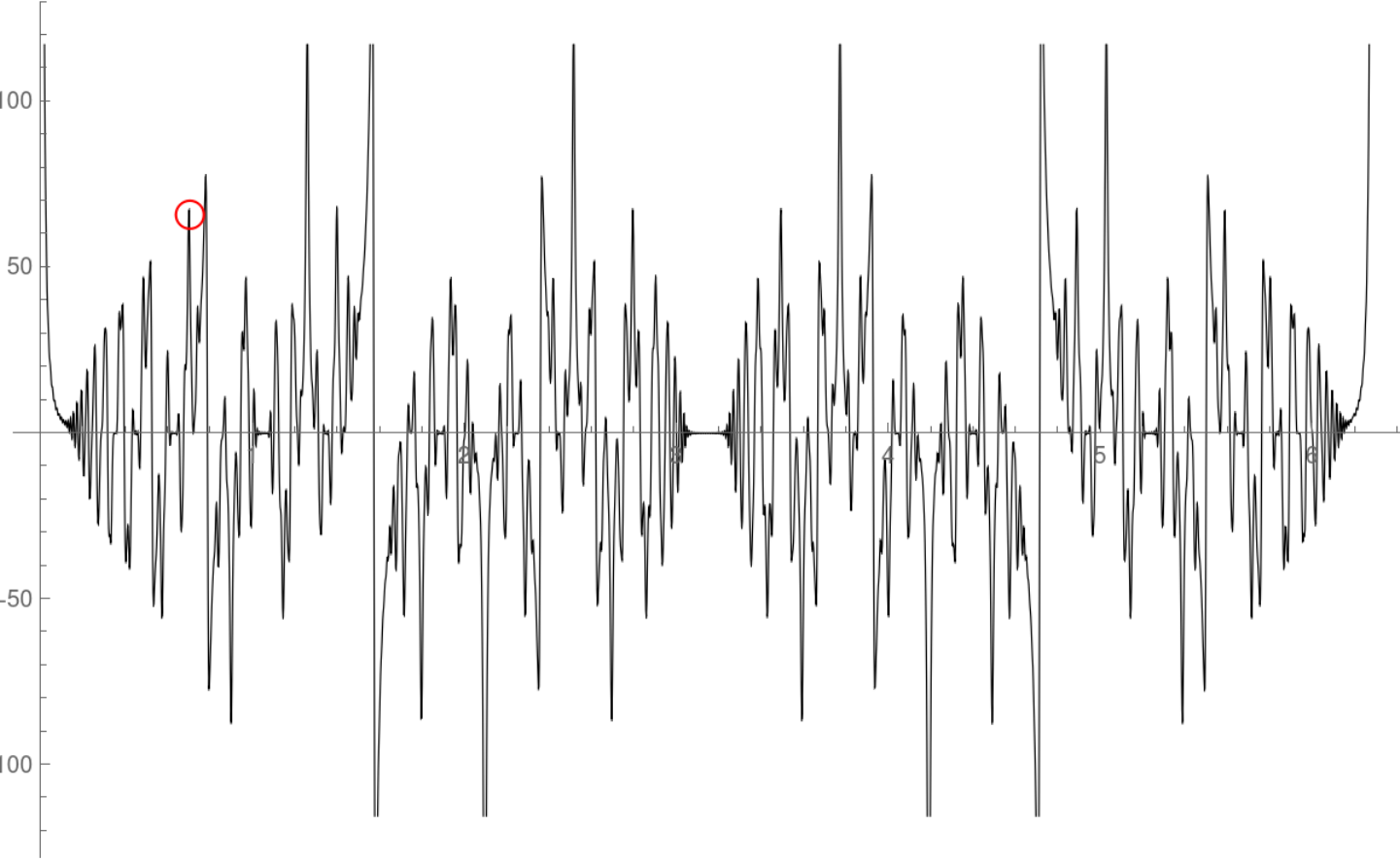}
   \caption{A plot of $|\mathbb K\circ\phi|$ on a circle of radius $0.997$ using  200 nonzero terms of its Maclaurin series. See Note \ref{N:nome}. }
\label{fig:Ksurface}
\end{figure}
  $$ \lambda(\omega)=16 e^{\pi i \omega}\mathlarger{\prod}_{k=1}^\infty\left(\frac{1+e^{2k \pi i \omega}}{1+e^{(2k-1) \pi i \omega}}\right)^8$$
  shows that $\lim_{t\to i\infty}\lambda(t)=0$, hence $z=0$ is a point of analyticity of $W$, and $W(0)=0$. Each Schwarz reflection of $T$ mirrors Schwarz reflections of $\Delta$. It is clear that the set of all these reflections cover $\DD$ since $e^{\pi i \mathbb H}=\DD$ and, as in the uniformization proof for $\lambda$, the reflections of $\Delta$ cover $\mathbb H$. The curves starting at zero on $\Omega\left(\hat \CC\setminus \{0,1,\infty\}\right)$ correspond to curves in $\DD$. 
  
   For a related construction, see also \S\ref{sec:composition}.
  \end{proof}

\begin{Note}\label{N:nome}{\rm 

\begin{enumerate}
\item  As an example, Fig. \ref{fig:Ksurface} shows the singularities on the Riemann surface of the complete elliptic integral $\mathbb K(\omega)$, seen after uniformization through $\phi$ in (\ref{eq:inome0}), using $(P_{200}\circ\phi)_{200}$. 
In the figure, the marked singularity is at $0$, after a clockwise loop around $1$ and a clockwise loop around $\infty$.  The higher the Riemann sheet, the more suppressed is the singularity. Properly dilated, the singularities exhibit a periodic structure, reflecting the simple monodromy group of $\mathbb K$. Compare with Fig. \ref{fig:PIsurface}, the Riemann surface of the Borel transform of the  tronqu\'ee of P$_{\rm I}$ which has infinitely many singularities on each Riemann sheet, and exhibits a less regular structure.

\item We note that the only possible singularities of $\mathbb K$ on any Riemann sheet are $\{0,1,\infty\}$. The number of visible sheets of $\Omega$ is roughly $\frac13$ of the number of visible peaks.
\end{enumerate}
}
\end{Note}

\begin{Note}\label{N:nome-number} {\rm The Maclaurin series of the function $\mathbb K\circ\phi$
		has interesting number theoretical properties. It is a lacunary series whose powers with nonzero coefficients are the numbers that are the sum of two squares, cf. OEIS A001481, and these coefficients are $\frac{\pi}{2} a_k$ where the $a_k$s are the  number of ways of writing $k$ as a sum of at most two nonzero
		squares where order matters, cf. OEIS A002654.		
	}
\end{Note}

\subsection{Uniformization by composition of elementary conformal maps}
\label{sec:composition}

In this section we introduce a new way to uniformize non-trivial Riemann surfaces using elementary maps. As a particular example we show that the Riemann surface of the Borel plane for second order ODEs  can be uniformized by  a limit of compositions of elementary conformal maps. Thus instead of using the transcendental  inverse elliptic nome function, as in  (\ref{eq:Nome1})--(\ref{eq:inome0}), the uniformizing map can be well approximated by elementary functions. This is, in fact, how we first obtained the map $\psi$ given in \S\ref{sec:odes}.

The essential idea here is to open up, after $n$ compositions,  $n$ Riemann sheets of a Riemann surface. We demonstrate this procedure with an example.  Let $\psi_1$ and $\phi_1$ be the  elementary conformal maps for $\Omega=\CC \setminus [1, \infty)$, (see Eq. (\ref{eq:r1r2}) in Example \ref{mini12} of Note \ref{N:sing1}, also discussed in the Appendix, Eq. \eqref{eq:onecutmap}), which map a one-cut plane into the unit disk. Further, define $\psi_n(\omega)=[\psi_1(\omega^n)]^{1/n}$, with inverse $\phi_n(z)=[\phi_1(z^n)]^{1/n}$, for $n \in \mathbb Z$, with the usual branch choices.
\begin{Theorem}\label{Th:compo}
  The composition map $\psi_{2^n}\circ\psi_{2^{n-1}}\circ\cdots \circ \psi_1(\omega)$ converges, as $n\to\infty$,   to the uniformization map $\psi(\omega)=e^{-\pi \, \mathbb K(1-\omega)/\mathbb K(\omega)}$ in \eqref{eq:Nome1}, the elliptic nome function.
\end{Theorem}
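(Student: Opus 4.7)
My plan is built around the classical descending Landen transformation for the elliptic nome, which in the notation of \eqref{eq:Nome1} reads
\[
q(\psi_1(\omega)^2) \;=\; q(\omega)^2, \qquad q(\omega) = e^{-\pi\mathbb{K}(1-\omega)/\mathbb{K}(\omega)};
\]
equivalently, with $\tau(\omega) := i\mathbb{K}(1-\omega)/\mathbb{K}(\omega)$, one has $\tau(\psi_1(\omega)^2) = 2\tau(\omega)$. This is a standard consequence of the Landen/Gauss identities $\mathbb{K}(\psi_1(m)^2) = \tfrac{1+\sqrt{1-m}}{2}\mathbb{K}(m)$ and $\mathbb{K}(1 - \psi_1(m)^2) = (1+\sqrt{1-m})\mathbb{K}(1-m)$, whose ratio yields $\tau(\psi_1(m)^2) = 2\tau(m)$. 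Both sides of the displayed identity are real analytic and positive on $(0,1)$, so the identity extends to the simply connected first sheet $\Omega_0 := \CC\setminus[1,\infty)$ by analytic continuation.

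Iterate: set $\omega_0 := \omega$ and $\omega_{k+1} := \psi_1(\omega_k)^2$, so that $q(\omega_n) = q(\omega)^{2^n}$; all $\omega_k$ with $k\ge 1$ lie in $\DD$, well inside the domain of $\psi_1$. An induction on $n$ then yields the structural identity
\[
F_n(\omega)^{2^n} \;=\; \psi_1(\omega_n), \qquad F_n := \psi_{2^n}\circ\psi_{2^{n-1}}\circ\cdots\circ\psi_1,
\]
whose inductive step uses $\psi_{2^{n+1}}(w)^{2^{n+1}} = \psi_1(w^{2^{n+1}})$ together with $F_n(\omega)^{2^{n+1}} = (F_n(\omega)^{2^n})^2 = \psi_1(\omega_n)^2 = \omega_{n+1}$. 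Hence $F_n(\omega) = \psi_1(\omega_n)^{1/2^n}$, where the principal $2^n$-th root is singled out by the normalization $F_n(0) = 0$, $F_n'(0) > 0$.

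Convergence is now a short asymptotic computation at $0$. From $\psi_1(z) = z/4 + O(z^2)$ and $q(z) = z/16 + O(z^2)$ one obtains $\psi_1(z) = 4\,q(z)\,(1+O(z))$. On any compact $K \subset \Omega_0$, $|q(\omega)| \le r < 1$, whence $|q(\omega_n)| = |q(\omega)|^{2^n} \le r^{2^n}\to 0$ super-exponentially, and therefore $\omega_n \to 0$ uniformly on $K$. Using the exact relation $q(\omega_n)^{1/2^n}=q(\omega)$ (principal branch),
\[
F_n(\omega) \;=\; \psi_1(\omega_n)^{1/2^n} \;=\; 4^{1/2^n}\,q(\omega_n)^{1/2^n}\,(1+O(\omega_n))^{1/2^n} \;=\; 4^{1/2^n}\,q(\omega)\,(1+o(1)) \;\longrightarrow\; q(\omega) = \psi(\omega),
\]
uniformly on $K$. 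Extension to all of $\Omega$ then follows by successive Schwarz reflections of $F_n$ across the analytic boundary arcs of $\Omega_0$, mirroring the construction in the proof of \S\ref{sec:odes}: each reflection opens up a further sheet on which $F_n$ and $\psi$ must agree by analytic continuation of the equality already established on $\Omega_0$.

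The main technical obstacle I anticipate is the bookkeeping of branches of the $2^n$-th roots, so that both $F_n^{2^n} = \psi_1(\omega_n)$ and $q(\omega_n)^{1/2^n} = q(\omega)$ hold as equalities of single-valued analytic functions on $\Omega_0$, rather than only modulo $2^n$-th roots of unity. This is forced by the common normalization (vanishing at $0$ with positive derivative, positivity on the real segment $(0,1)\subset \Omega_0$) and by analyticity on the simply connected $\Omega_0$, but it must be tracked through each step of the induction.
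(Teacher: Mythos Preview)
Your proposal is correct and is essentially the same as the paper's alternative proof via Lemma~\ref{L29}: both rest on the Landen identity, which the paper writes as $\phi_1(w(z^2)^{1/2})=w(z)$ with $w=q^{-1}$, and which is equivalent to your $q(\psi_1(\omega)^2)=q(\omega)^2$. The only cosmetic difference is that the paper iterates on the $\phi$-side in $\DD$ and proves $\phi_1\circ\phi_2\circ\cdots\to q^{-1}$ (then inverts), using the elementary fact $f(z^n)^{1/n}\to z$, whereas you iterate directly on the $\psi$-side in $\Omega_0$ and read off convergence from the local expansion $\psi_1(z)=4q(z)(1+O(z))$; your structural identity $F_n(\omega)^{2^n}=\psi_1(\omega_n)$ is the same as the paper's $w(z^{2^{m+1}})^{1/2^{m+1}}=\phi_{2^m}^{-1}\circ\cdots\circ\phi_1^{-1}(w(z))$ after substituting $\omega=w(z)$. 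The paper also sketches a first, monodromy-based argument (tracking how $r_{\sigma_j}^2$, $r_\infty^2$, $r_\infty r_{\sigma_j}$ collapse under each $\psi_{2^k}$), which you do not attempt; your Schwarz-reflection remark for the extension to higher sheets is adequate for the stated convergence but less informative than that geometric picture.
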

\begin{proof}
  We first note that the singular points of $\psi_k$ are $\{0,\{\sigma_j\}_{j\le k-1},\infty\}$,  where $\{\sigma_j\}_{j\le k-1}$ are the $k$-th roots of unity (with zero a point of analyticity on the first Riemann sheet). Secondly,  for any $n\in\NN$,  $\psi_{2^n}\circ\psi_{2^{n-1}}\circ\cdots\psi_1$ is only singular at $\{0,1,\infty\}$ (with zero a point of analyticity on the first Riemann sheet); this follows immediately by examining the singularities of $\phi_k$ for $k\in\NN$. Convergence of the composition follows from the fact that $\psi_k=2^{-2/k}\omega(1+O(\omega^k))$ (see also the proof of Lemma \ref{L29}).

  Next, we note that the non-constant term of the Puiseux expansion of $\psi_n$ at the finite  nonzero singularities is of the form $\sigma_j+const \sqrt{\omega-\sigma_j}(1+o(1))$, and $1+const. \omega^{-1/2}(1+o(1))$ at infinity. We describe  the monodromy group of $\psi_k$ in terms of the  generators $r_{\sigma_j}$ and $r_{\infty}$,  the local monodromies at $\sigma_j,\infty$. A straightforward calculation shows that any of the elements $r_{\sigma_j}^2$, $r_{\infty}^2$,  $r_\infty r_{\sigma_j}$ of the monodromy group of the Riemann surface of $\psi_k$ is mapped on a single $r_{\sigma_j}$ or $r_{\infty}$  of the monodromy group of the Riemann surface of $\psi_{2^k}$, while $r_\infty r_{\sigma_j}$ for $i\ne j$ is mapped inside $\DD$. Injectivity of the limit map is also straightforward.

An alternative proof, based on convergence  of $\phi_1\circ \phi_2\circ\cdots$, is given in Lemma \ref{L29}:
\end{proof}

 \begin{Lemma}\label{L29}
  \label{Literation}
  \begin{enumerate}
  \item We have 
  \begin{equation}
    \label{eq:infinitecompo-1}
   \phi_1\circ \phi_2\circ \phi_{4}\circ\cdots\circ \phi_{2^k} \to q^{-1}\quad, \quad k\to\infty
  \end{equation}
uniformly in $\DD$,  where $q^{-1}$ is the inverse elliptic nome function.
  
  \item Moreover, starting the doubling iteration with $\phi_n$ instead of $\phi_1$, we have more generally
    \begin{equation}
    \label{eq:infinitecompo-n}
    \phi_n\circ \phi_{2n}\circ \phi_{4n}\circ\cdots\circ \phi_{2^k n}\to z\mapsto [q^{-1}(z^n)]^{1/n}  \quad, \quad k\to\infty
  \end{equation}
  which uniformizes   $\Omega\left(\hat\CC \setminus S\right)$, where $S$ are the $n$-th roots of unity. 
 
  \end{enumerate}
 \end{Lemma}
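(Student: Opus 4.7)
\textbf{Proof plan for Lemma \ref{L29}.}

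My approach is to first extract a sharp description of each $\phi_n$ near the origin, then show the compositional sequence is Cauchy uniformly on compact subsets of $\DD$, and finally identify the limit by uniqueness of the normalized Riemann map onto $\Omega(\hat\CC\setminus\{(0),1,\infty\})$. The starting observation is that the one-cut inverse map has the explicit form $\phi_1(z)=4z/(1+z)^2$ (as recorded in the Appendix), so
$$\phi_n(z)=\bigl[\phi_1(z^n)\bigr]^{1/n}=4^{1/n}\, z\,\bigl(1+O(z^n)\bigr),$$
with the principal $n$-th root chosen to keep $\phi_n$ analytic at $0$. Hence $\phi_n'(0)=4^{1/n}$ and, for any fixed $r<1$, $|\phi_n(z)-4^{1/n}z|\le C_r\, r^{n+1}$ on $\{|z|\le r\}$ for all large $n$.

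Set $\alpha_k:=4^{1/2^k}$ and $F_N:=\phi_1\circ\phi_2\circ\phi_4\circ\cdots\circ\phi_{2^N}$. Then $F_N'(0)=\prod_{k=0}^N\alpha_k\to 4^{2}=16$, matching the leading Maclaurin coefficient of $q^{-1}$ recorded in \eqref{eq:inome0}. For convergence I would use the telescoping identity
$$F_{N+1}(z)-F_N(z)=F_N\bigl(\phi_{2^{N+1}}(z)\bigr)-F_N(z),$$
combined with the bound $|\phi_{2^{N+1}}(z)-z|\le|\alpha_{N+1}-1||z|+C_r\, r^{2^{N+1}+1}$ on $\{|z|\le r\}$ and a uniform Lipschitz bound on $F_N$ on a slightly larger disk. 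The Lipschitz bound is obtained by normalizing: the maps $F_N/F_N'(0)$ are univalent on $\DD$ (a composition of conformal maps between Riemann surfaces) with $F_N(0)=0$ and derivative $1$ at $0$, so Koebe's distortion theorem gives the required uniform bound, hence $\{F_N\}$ is Cauchy on every compact subset of $\DD$ and converges to an analytic $F$.

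It remains to identify $F$ with $q^{-1}$. First, $F(0)=0$ and $F'(0)=16=(q^{-1})'(0)$. Second, $F$ is univalent on $\DD$ by Hurwitz's theorem applied to the univalent $F_N$. Third, I would show that $F(\DD)$ is biholomorphic to the universal cover $\Omega(\hat\CC\setminus\{(0),1,\infty\})$: each additional $\phi_{2^{N+1}}$ in the composition opens up new sheets exactly at $\{1,\infty\}$, via the $2^{N+1}$-th root branching, in the sense already quantified in the monodromy calculation of Theorem \ref{Th:compo} (any homotopy class of curves based at $0$ with a prescribed finite word in the loops around $1$ and $\infty$ is realized in $F_N(\DD)$ for $N$ large). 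By the uniqueness of the normalized Riemann uniformization with $F(0)=0$ and $F'(0)>0$, we conclude $F=q^{-1}$, proving part (1). Part (2) follows by conjugating with $z\mapsto z^n$: since $\phi_{2^k n}(z)=[\phi_{2^k}(z^n)]^{1/n}$, the composition factors as
$$\phi_n\circ\phi_{2n}\circ\cdots\circ\phi_{2^k n}(z)=\bigl[\phi_1\circ\phi_2\circ\cdots\circ\phi_{2^k}(z^n)\bigr]^{1/n},$$
so taking $k\to\infty$ and applying part (1) to the inner composition yields $[q^{-1}(z^n)]^{1/n}$, which uniformizes $\Omega(\hat\CC\setminus\{(0),\sigma_0,\ldots,\sigma_{n-1},\infty\})$.

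The main obstacle will be rigorously combining the $n$-dependent local estimate with the compositional dynamics, specifically ensuring that the Koebe-type Lipschitz bounds for $F_N$ are uniform in $N$ on a fixed compact exhaustion of $\DD$; everything else reduces either to the explicit formula for $\phi_1$ or to the monodromy bookkeeping already carried out in the proof of Theorem \ref{Th:compo}.
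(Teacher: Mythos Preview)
Your route is genuinely different from the paper's. The paper does not build a Cauchy sequence at all: it uses the Landen transformation, which gives the \emph{exact} functional identity $\phi_1\bigl([q^{-1}(z^2)]^{1/2}\bigr)=q^{-1}(z)$, equivalently $\psi_1\circ q^{-1}(z)=[q^{-1}(z^2)]^{1/2}$. Iterating yields
\[
[q^{-1}(z^{2^{m+1}})]^{1/2^{m+1}}=\phi_{2^m}^{-1}\circ\cdots\circ\phi_1^{-1}\circ q^{-1}(z),
\]
and the proof finishes with the elementary observation that $[f(z^n)]^{1/n}\to z$ uniformly on compacts whenever $f$ is analytic at $0$ with $f(0)=0$, $f'(0)\ne 0$. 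No normal-families or uniqueness-of-uniformization argument is needed, and the identification with $q^{-1}$ is built in from the start.

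Your soft argument, by contrast, has a real gap at the Koebe step. The maps $F_N$ are \emph{not} univalent as functions $\DD\to\CC$. Already $F_1=\phi_1\circ\phi_2$ fails: since $\phi_1(w)=\phi_1(1/w)$, it suffices to find distinct $z_1,z_2\in\DD$ with $\phi_2(z_1)\phi_2(z_2)=1$, and $z_1=i\sqrt{3-2\sqrt{2}}$, $z_2=-z_1$ do the job. The $F_N$ are injective only as maps into the partial covers $\Omega_N$, and the classical Koebe distortion theorem does not apply there; this also invalidates your Hurwitz step for univalence of the limit (indeed $q^{-1}:\DD\to\CC$ is not univalent). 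A salvage would need a different source of uniform boundedness on compacts---perhaps a Schottky/Montel argument on $F_N(z)/z$, or a direct inductive estimate---but that is not what you proposed and is not obviously simpler than the Landen route. Two smaller points: your telescoping $F_{N+1}=F_N\circ\phi_{2^{N+1}}$ requires $F_N$ to be continued beyond $\DD$, since $\phi_{2^{N+1}}(\DD)\not\subset\DD$ (manageable on compacts for large $N$, but it must be said); and invoking the monodromy bookkeeping of Theorem~\ref{Th:compo} for the identification makes your Lemma~\ref{L29} depend on that theorem, whereas in the paper Lemma~\ref{L29} is offered as an independent alternative proof of it. Your reduction of part~(2) to part~(1) via $\phi_{mn}(z)=[\phi_m(z^n)]^{1/n}$ is correct.
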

 
\begin{proof}
   We note that $z(\omega)$ is analytic in the unit disk (see also \eqref{eq:inome0}). 

   The Landen transformation \cite{Bateman} implies that the map $\phi_1(\omega)$ satisfies  $\phi_1(w(z^2)^{1/2})=w(z)$, and therefore $w(z^2)^{1/2}=\phi_1^{-1}(w(z))$.
Iterating this identity, we find in general, for $m\in\NN$ and $z\in\DD$, that 
$$w(z^{2^{m+1}})^{1/2^{m+1}}=\phi^{-1}_{2^m}\circ\cdots\circ \phi_1^{-1}(w(z))$$
For any function analytic within the unit disk and such that $f(z)=cz(1+o(z))$, and for any $r<1$  we have $\lim_{n\to \infty}f(z^n)^{1/n}=z$ uniformly in $\DD_r(0)$ (since $f(z)/z$ is uniformly bounded there). This implies
$$\lim_{m\to \infty} \phi^{-1}_{2^m}\circ\cdots\circ \phi_1^{-1}(w(z))=z$$
The result (\ref{eq:infinitecompo-1}) follows by straightforward function inversions. The proof of (\ref{eq:infinitecompo-n}) for general $n$ is very similar.
The proof of (\ref{eq:landen}) follows by noting that $\phi_n(\omega)=\phi_1(\omega^n)^{1/n}$. 
\end{proof}
\begin{Note}{\rm 

\begin{enumerate}

\item

  The limit in Lemma \ref{L29} can also be expressed as an infinite iteration limit of the ascending  Landen transformation, $L(z)=2\sqrt{z}/(1+z)$:
  \begin{equation}
  \lim_{k\to\infty} L\circ L \circ \dots \circ L(z^{2^k})=\sqrt{q^{-1}(z^2)}
  \label{eq:landen}
  \end{equation}

\item
This result has an important  practical implication: we can  approximate a complicated (e.g., elliptic) map by a composition of elementary (e.g., rational) maps.  For example,  $\phi_1\circ\phi_2\circ\circ \dots \circ \phi_{64}$ ``opens up'' 6 Riemann sheets and has an acceleration modulus of 15.8 at $\omega=0$ (compared with 16, for actual uniformization) and distorts the point $\omega=1-10^{-33}$ to $0.9$ in the conformal disk, compared to the ideal $1-10^{-40}$ of the limit map. 

\item
The results of the following section show that these composition maps can also be used to provide accurate approximate uniformizations of the Riemann surface of the Borel transform of solutions to {\bf nonlinear} second order  ODEs.

\end{enumerate}

}\end{Note}
\begin{Definition}
Let  $\Omega_{\ZZ}$ be the set of equivalence classes of curves starting at $0$, modulo homotopies in $\CC\setminus \ZZ$\footnote{Not $\hat{\CC}\setminus\ZZ\setminus\{\infty\}$; a curve around infinity is undefined, since $\ZZ$ is not compact.}. 
\end{Definition}
This is the Riemann surface of analyticity of solutions of linear or nonlinear ODEs with eigenvalues normalizable to $\pm 1$ \cite{Duke}, such as tronqu\'ee solutions of $P_{\rm I}-P_{\rm V}$. \footnote{ For some of these ODE solutions, {\em e.g.} the tronqu\'ees of $P_{\rm I}$, the Riemann surface $\Omega$ is "larger", and not simply connected (all odd integers are square root branch points); we pass instead to the universal cover of $\Omega$ which is indeed $\Omega_{\ZZ}$. See also footnote \ref{ftn4} on p. \pageref{ftn4}.}
\subsection{Uniformization of the Riemann surface of  $\Omega_{\ZZ}$}
Figure \ref{fig:PIsurface} shows the singularity structure of $Y$, the Borel transform of the tritronqu\'ee solution $y$ of Painlev\'e P$_{\rm I}$, on its Riemann surface. Starting from a finite number of terms of the asymptotic expansion, the uniformization of the associated Riemann surface described in Theorem \ref{T:8} permits analytic continuation onto  the higher sheets of this Riemann surface.
\label{sec:P1-unif}
 \begin{figure}[h!]
   \centering
  \includegraphics[scale=0.4]{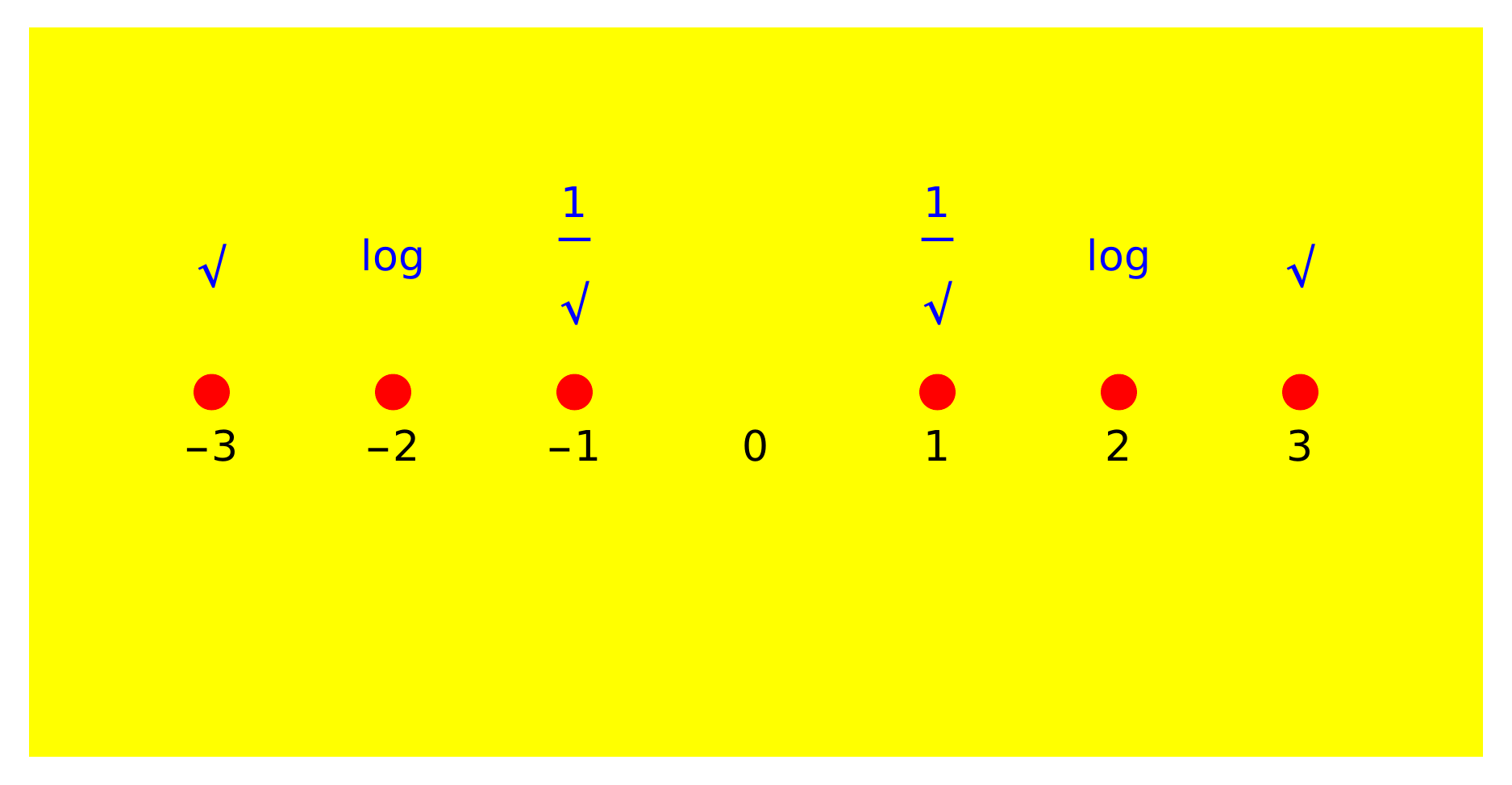}
   \caption{The singularities of the Borel transform of the {\it tronqu\'ee} solution of the Painlev\'e equation P1, which occur at all integers, except for zero on the  first Riemann sheet (see Note \ref{NoteN1}). The type of singularity is indicated in blue. Here, for instance, $"\sqrt{\ }"$ means a local structure of the form $\sqrt{p\mp n}A(p)+B(p)$, with $A,B$ locally analytic. The pattern is that the singularities weaken by a factor $(p \mp n)^{|n|/2}.$}
\label{fig:P1singularities}
\end{figure}
\begin{figure}[h!]
  \centering
\raisebox{8cm}{\scalebox{1}{$|Y(\phi(e^{i\theta)}))|$}}\hspace{-0cm}  \includegraphics[scale=0.8]{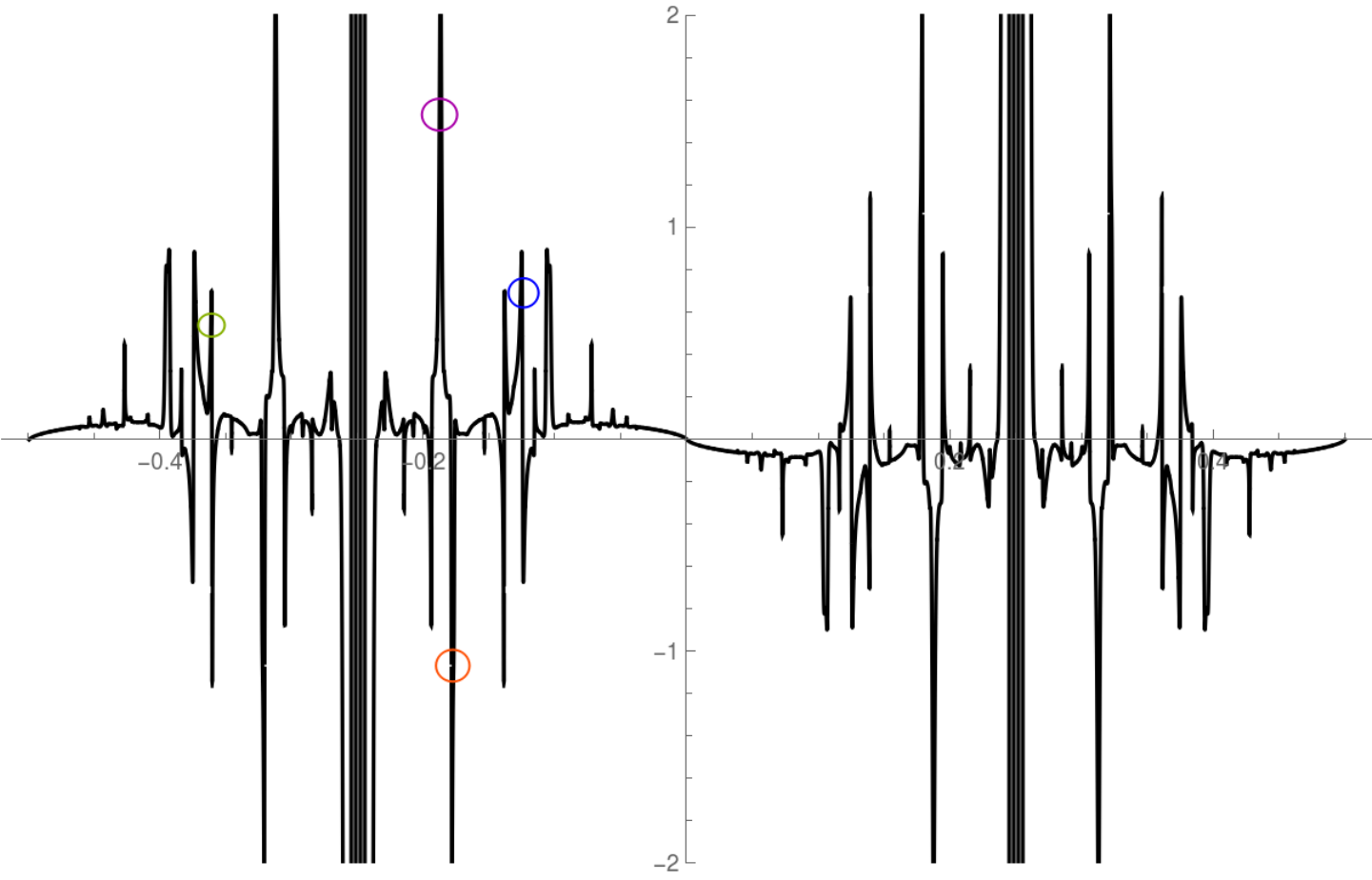}\raisebox{4cm}{$\frac{\theta}{2\pi}$}
   \caption{Singularities of  $Y\circ\phi$ along the conformal circle. See Note \ref{unifp1}.}
\label{fig:PIsurface}
\end{figure}
\begin{Theorem}\label{T:8} 
  $\Omega_{\ZZ}$ is uniformized by $\psi=\phi^{-1}$, where $\phi= \frac{1}{2\pi i}\ln(1-q^{-1})$, with $q$ the elliptic nome function, as in \eqref{eq:Nome1}.
 \label{T:P1}
\end{Theorem}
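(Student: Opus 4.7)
The plan is to factor the uniformizing inverse map $\phi$ as $F^{-1}\circ q^{-1}$, where $q^{-1}:\DD\to\Omega_1:=\Omega(\hat\CC\setminus\{(0),1,\infty\})$ is the elliptic-nome uniformization already built in \S\ref{sec:odes}, and $F:\Omega_\ZZ\to\Omega_1$ is a biholomorphism induced by the entire map $g(\omega):=1-e^{2\pi i\omega}$. The heuristic is that $g$ collapses the infinite puncture set $\ZZ$ onto the single puncture $\{0\}$, turning the universal cover over $\CC\setminus\ZZ$ into the universal cover over $\hat\CC\setminus\{0,1,\infty\}$, whose uniformization we already control.

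First I would record the covering-theoretic facts about $g$: (i) $g(\omega)=0\iff\omega\in\ZZ$; (ii) $g$ never attains $1$ or $\infty$ on $\CC$; (iii) $g'(\omega)=-2\pi i\,e^{2\pi i\omega}$ vanishes nowhere; (iv) $g(\omega_1)=g(\omega_2)\iff \omega_1-\omega_2\in\ZZ$. Together these show that $g$ restricts to a regular holomorphic covering $\CC\setminus\ZZ\to\CC\setminus\{0,1\}$ with deck group $\ZZ$, and that $g:\CC\to\CC\setminus\{1\}$ is itself the universal cover of $\CC\setminus\{1\}$.

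Next, define $F[\gamma]:=[g\circ\gamma]$ on homotopy classes of curves from $0$; continuity of $g$ makes this well-defined, and $g\circ\gamma$ avoids $\{0,1,\infty\}$ on its interior by (i),(ii). For injectivity, given $F[\gamma_1]=F[\gamma_2]$ via a homotopy $H$ rel endpoints, lift $H$ through the universal cover $g:\CC\to\CC\setminus\{1\}$ starting at $\gamma_1$; by discreteness of the fibers, the starting point $0$ and the ending point $\gamma_1(1)$ are forced to be constant in the homotopy parameter, so $\gamma_2(1)=\gamma_1(1)$ and the lift ends at the unique path $\gamma_2$ with that endpoint. For surjectivity, any representative $\tilde\gamma$ of a point of $\Omega_1$ lies in $\CC\setminus\{1\}$ on the interior and so lifts uniquely through the universal cover $g$ to a curve $\gamma$ with $\gamma(0)=0$; this $\gamma$ avoids $\ZZ$ on the interior since $\tilde\gamma$ avoids $0$ there. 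Holomorphicity of $F$ is clear (locally $g$), and its local inverse $\omega\mapsto(2\pi i)^{-1}\ln(1-\omega)$ is also holomorphic. Hence $\psi:=q\circ F:\Omega_\ZZ\to\DD$ is a biholomorphism, and unpacking its inverse $\phi=F^{-1}\circ q^{-1}$ in local coordinates on $\DD$ gives $\phi(z)=\frac{1}{2\pi i}\ln(1-q^{-1}(z))$, with the single-valued branch pinned by $q^{-1}(0)=0$ (so $\ln 1=0$) and analytically continued on all of $\DD$ — possible because $q^{-1}$ avoids the value $1$ on $\DD$, since $1$ is a branch point of $\Omega_1$.

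The most delicate bookkeeping is with the fixed-origin convention: $0$ is simultaneously a marked puncture of both base spaces and a regular point of each covering surface on its distinguished first sheet. One must verify that $F$ matches the first-sheet $0$ of $\Omega_\ZZ$ to the first-sheet $0$ of $\Omega_1$ (it does, since $g(0)=0$ and $g$ is a local biholomorphism at $0$), and that the covering-space lifting arguments survive being anchored at this marked basepoint — which they do, via continuity and the local invertibility of $g$ at $0$ combined with the uniqueness of lifts through a prescribed preimage.
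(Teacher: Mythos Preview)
Your proof is correct and follows the same overall architecture as the paper: factor $\phi$ as the composition of the known elliptic-nome uniformization $q^{-1}:\DD\to\Omega_1=\Omega(\hat\CC\setminus\{(0),1,\infty\})$ with a biholomorphism $\Omega_1\to\Omega_\ZZ$ induced by $\omega\mapsto\frac{1}{2\pi i}\ln(1-\omega)$.

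The difference lies in how the key biholomorphism is established. The paper argues on the level of fundamental groups: it identifies the free group on generators $r_k$ (loops around $k\in\ZZ$) with the monodromy of $\Omega_\ZZ$, the free group on $\tilde r_0,\tilde r_1$ (loops around $0,1$) with that of $\Omega_1$, and observes that $\Phi$ sends $\tilde r_0\tilde r_1^{\,k}\mapsto r_k$, giving a group isomorphism and hence a biholomorphism of covers. You instead work directly with the covering map $g(\omega)=1-e^{2\pi i\omega}$, verifying it is a regular $\ZZ$-cover $\CC\setminus\ZZ\to\CC\setminus\{0,1\}$ and the universal cover $\CC\to\CC\setminus\{1\}$, then use path and homotopy lifting to check that $[\gamma]\mapsto[g\circ\gamma]$ is a bijection between the homotopy-class descriptions of $\Omega_\ZZ$ and $\Omega_1$. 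Your argument is more explicit and self-contained (it does not require the reader to parse what ``$r_k$ is obtained through $\Phi$ from $\tilde r_0\tilde r_1^{\,k}$'' means or why that suffices), and your final paragraph on the fixed-origin convention addresses a bookkeeping point the paper leaves implicit. The paper's version is terser and makes the combinatorial structure of the correspondence (which loop in $\Omega_1$ hits which integer) more visible.
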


\begin{proof}
 This result follows from the map in  section \S\ref{sec:odes}
 (see equations (\ref{eq:Nome1}) and (\ref{eq:inome0})), and the following lemma.
  \begin{Lemma} 
The function $\Phi=\omega\mapsto \frac1{2\pi i}\ln(1-\omega)$ maps conformally $\Omega(\hat\CC\setminus \{(0), 1, \infty\})$, discussed in \S\ref{sec:odes},
  onto  the Riemann surface $\Omega_{\ZZ}$, defined above as the set of equivalence classes of curves starting at $0$, modulo homotopies in $\CC\setminus \ZZ$.
\end{Lemma}
\begin{proof}
Let $\mathcal G$ be the free group with generators $r_k$, where for each $k\in\ZZ$, the generator $r_k$ is the anticlockwise rotation around $k$.   An equivalence class of curves of  $\Omega_\ZZ$ can be described by a word $r_{k_1}\cdots r_{k_m}$, for some $m$ plus a piecewise linear arc in $\CC$. Let $\tilde{r}_0,\tilde{r_1}$ the generators of the group of $\Omega$. The result follows by noting that the element $r_k\in\mathcal{G}$ is obtained through $\Phi$ from $\tilde{r}_0\tilde{r_1}^k$. Injectivity is clear. (In words, through $\Phi$, any ``address'' in $\Omega_{\ZZ}$ can be reached uniquely from an address on $\Omega$.)
\end{proof}
\end{proof}

\begin{Note} \label{unifp1}{\rm 
    \begin{enumerate}
       \item Let $P_{200}$ denote the Maclaurin polynomial with 200 nonzero coefficients of the Borel transform $Y$ of the tronqu\'ee solutions $y$ of Painlev\'e P$_{\rm I}$. $|(P_{200}\circ\phi)_{200}|$ is plotted in Fig.\ref{fig:PIsurface} at $0.1$ distance from the boundary  $\partial\DD$. Here $\phi$ is the uniformizing map given in Theorem \ref{T:P1}. The circles correspond to singularities of $Y$ at $-2$, reached by analytic continuation from below (green) $-1$ after analytic continuation around $+1$ (magenta), $0$ after analytic continuation around $+1$ (red) and $-2$ reached from above.

  \item \label{N23ii}  \label{allsheets} 
 Importantly, $Y$ does not have exponential growth on any particular Riemann sheet  but there is substantial strengthening of the singularities at $\pm n$ on the $n$-th sheet, \cite{Duke}, which results in cumulative exponential growth on $\Omega$ (the thick lines in the figure). The growth of the odd coefficients of $(P_{200}\circ\phi)_{200}$ matches $\sim - i (-1)^k e^{1.73 \sqrt{k}}, k=2j+1$ which implies exponential growth towards the boundary of the conformal disk, with blow-up rate roughly $\exp[\frac34\text{dist}(z,\DD)^{-1}]$.  In this sense $(P_{200}\circ\phi)_{200}$ can see the totality of the Riemann surface.

\item \label{difwght} Because exponential growth only occurs in exceptional directions in $\Omega$, not close to the origin, and for $n$ not large enough, uniformizing the whole of $\Omega$ is suboptimal: a further conformal map of $\DD\setminus S$, where $S$ is a pair of small circles around the exponential singularities would take the conformal series from a class with exponential bounds to one with polynomial bounds, improving the accuracy everywhere except close to the exponential singularities where the information would be obstructed by $S$. In Figure \ref{fig:PIsurface} we used instead (non-rigorously) Pad\'e approximation, which is directionally sensitive.  

  \end{enumerate}

}\end{Note}

\subsubsection{Partial uniformization}\label{S:partialU} If one uses a finite composition $ \phi_1\circ \phi_2\circ \phi_{4}\circ\cdots\circ \phi_{2^k} $ instead of $\phi=q^{-1}$ in \eqref{eq:infinitecompo-1}, the result is a truncation of the full Riemann surface $\Omega$: only a finite number $k$  of sheets of $\Omega$ are opened up. For functions living on $\Omega_{\ZZ}$, partial uniformization is achieved using $\tilde{\phi}=\frac{1}{2\pi i}\ln(1-\phi_1\circ \phi_2\circ \phi_{4}\circ\cdots\circ \phi_{2^k})$. With $P_n$ of small degree, for a $z_0$ close enough to $\partial \DD$, and if the weight $W$ is significantly worse for $\Omega$ than for its truncations, further gain can be achieved by using the bounds in Theorem \ref{T1} to optimize over the triple $N,z_0,W(N)$ as explained in Note \ref{NBounds}.

Another use of partial uniformization is in analyzing the local behavior near singularities when more information is available, see Note \ref{NPartial}.

\subsection{Illustration of the gain in accuracy} \label{SgainInAcc}
\label{sec:t1discussion}

\subsubsection{Improvement of accuracy near zero} 
Accelerating the convergence, even near zero, is important when only a finite number of terms of the series is known, and our goal is to be as accurate  as possible in the reconstruction. Consider first the domain $\Omega_1=\CC\setminus [1,\infty)$, and let $F$ be analytic in $\Omega_1$ and bounded by 1. Then,
\begin{eqnarray}
	|F(\omega)-P_n(\omega)|\sim  |\omega|^n;\qquad n\to \infty;\qquad  |\omega|\text{ small}
	\label{eq:naive}
\end{eqnarray}

The procedure in Theorem \ref{T1} yields instead
\begin{equation}\label{oeq}
	|F(\omega)-(F\circ\phi)_n\circ\psi(\omega)|\sim 4^{-n}|\omega|^n;\qquad n\to \infty ;\qquad  |\omega|\text{ small}
\end{equation}

If  however, as it is often the case, $[1,\infty)$ is not a natural boundary of $F$ and,  instead, $F$ simply has branch point at $1$, say  $F$ is analytic on $\Omega\left(\hat\CC\setminus \{(0),1,\infty\}\right)$, then, using the maps obtained \eqref{eq:inome0} and \eqref{eq:Nome1} in \S\ref{sec:borel-odes} we get
\begin{equation}\label{oeq2}
	|F(\omega)-(F\circ\phi)_n\circ\psi(\omega)|\sim 16^{-n}|\omega|^n;\qquad n\to \infty ;\qquad  |\omega|\text{ small}
\end{equation}
which is a significant gain in precision near the origin.

\subsubsection{Improvement of accuracy for larger values of $\omega$ and in approaching singularities}\label{Saccsing}

The accuracy improvement using the optimal procedure is particularly dramatic when approaching the boundary $\partial\Omega$, crucial  when probing singularities. If $\Omega=\Omega\left(\hat\CC\setminus \{(0),1,\infty\}\right)$, then $\psi(1-10^{-40})=0.9012\cdots$, 
meaning that, if $\hat{R}_n$ requires $n=50$ terms for an accuracy of $.5\%$, then $P_n$ would require, for the same accuracy,  $n\sim 10^{40}$ terms of the {\bf same} input series, a practical impossibility.

More generally, a similar improvement occurs when $\Omega=\Omega\left(\hat\CC\setminus S\right)$, where $S$ is a discrete set. Since $\psi$ has to accommodate the Riemann surface of a logarithm at the points in $S$, as $\omega$ approaches a puncture in $S$, the uniformization has a logarithmic singularity there. Therefore, the Euclidean distance in $\CC$ between a point $\omega$  to the puncture is exponentially smaller than the distance between $\psi(\omega)$ and the point on $\partial\DD$ corresponding to the puncture, resulting in an  exponential distance distortion, roughly as in the example of $\Omega=\Omega\left(\hat\CC\setminus \{(0),1,\infty\}\right)$ discussed above.

Even with very few terms, the accuracy gain allows for approaching singularities enough to find their nature.  Here we illustrate this using $P_9$ for the complete elliptic integral of the first kind $F(\omega)=\mathbb K(\omega)$,
which is analytic on $\Omega\left(\hat\CC\setminus \{(0),1,\infty\}\right)$.
Consider the analytic continuation of its Maclaurin polynomial $P_9$ (i.e., just 8 terms of the expansion about $\omega=0$).  Figure \ref{PlotEllipticK} plots the $\log_{10}$ error of the approximate reconstruction, as the singularity at $\omega=1$ is approached. The red curve is the truncated series $P_9(\omega)$ itself, which is clearly a very bad approximation near the singularity. The pink curve 
shows the maximal order near-diagonal Pad\'e approximant computable from $P_9$, which is an improvement, but still fails close to the singularity. The dashed-blue curve shows the optimal procedure $(P_9\circ\phi)_9\circ\psi$, which is much more accurate as the singularity is approached.
\begin{figure} [ht!]
	\centering
	\includegraphics[scale=0.9]{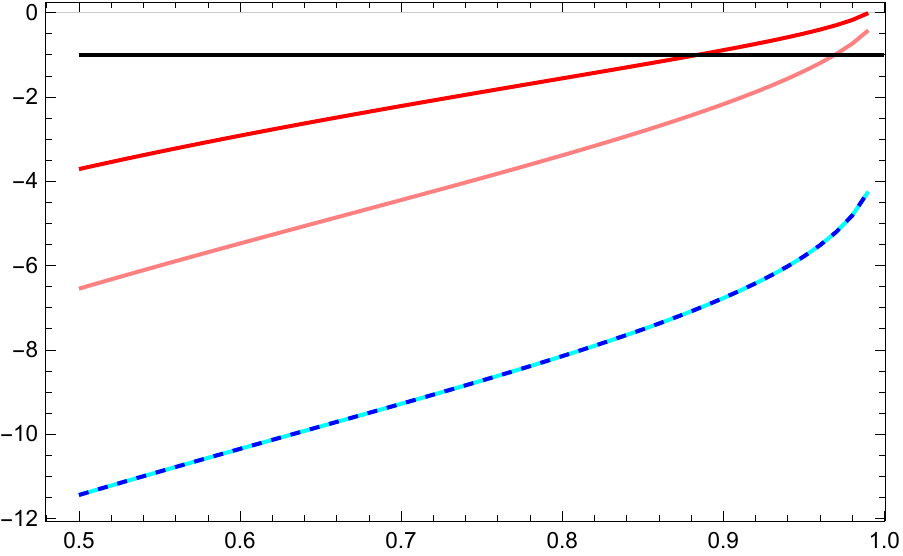}
	\caption{$\log_{10}$ of the error in calculating  $\mathbb K(\omega)$ on the interval $(0.5,0.99)$ directly from $P_9$ (red), from the maximal order near-diagonal Pad\'e approximant computable from $P_9$ (pink), 
		and from the optimal procedure $(P_9\circ\phi)_9\circ\psi$ (dashed-blue).
		This dashed-blue curve is indistinguishable on this scale from the series extrapolation with 472 extra terms, as explained in \S\ref{Scoefextrap}.
		The black line marks an accuracy of $0.1$.}	
	\label{PlotEllipticK}
\end{figure}

Of course, outside the unit disk the Maclaurin series diverges. In Figure \ref{PlotEllipticK-out} we compare the optimal procedure and Pad\'e, both based on $P_9$. We see that the optimal procedure is much more accurate  along the cut, beyond the radius of convergence, than the Pad\'e approximation.

\begin{figure}[ht!]
	\centering
	\includegraphics[scale=0.9]{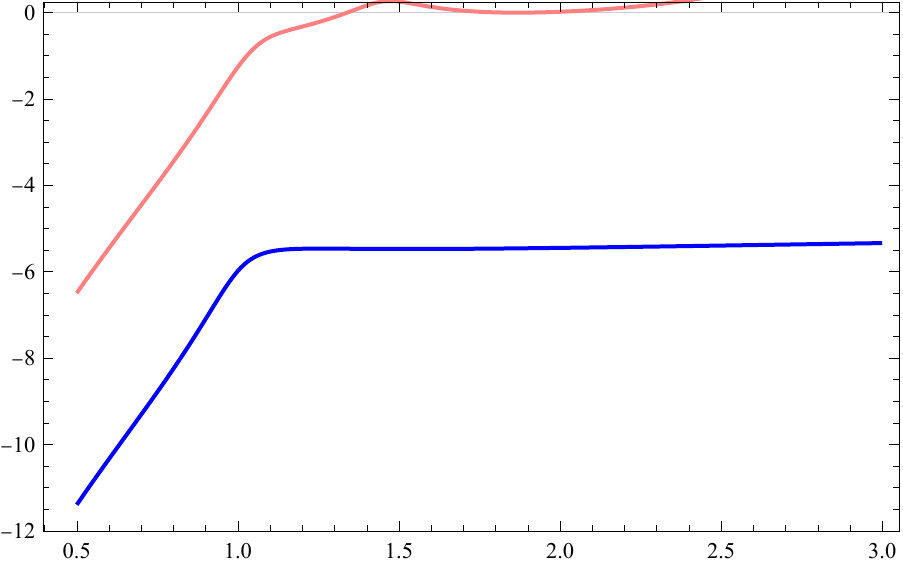}
	\caption{$\log_{10}$ of the error in calculating  $\mathbb K(\omega)$ from $P_9$ on the line $(1/2+i/10,3+i/10)$ using the optimal procedure (blue) and the maximal order near-diagonal Pad\'e approximant computable from $P_9$ (pink).  }	
	\label{PlotEllipticK-out}
\end{figure}

	\subsubsection{Singularities on other Riemann sheets}

Consider again the Riemann surface  $\Omega\left(\hat \CC\setminus \{(0),1,\infty\}\right)$, relevant for the elliptic integral function $\mathbb K(\omega)$. A loop starting at $0$ on the first Riemann sheet, rotating  clockwise around the singularity at $1$ and returning to $0$, now approaches a logarithmic singularity at $0$ on the second sheet. 
This is illustrated by the orange curve in 
Figure \ref{fig:Ksurface-loops}. We can also consider a curve that loops {\it twice} around the singularity at $1$ before returning to $0$, which is now a singularity on the third sheet. This is shown as the purple curve in Figure \ref{fig:Ksurface-loops}. A more complicated closed loop consists of starting at $0$ on the first sheet, looping around both $1$ and $0$ before approaching the singularity at $1$: this is shown as the magenta curve in Figure \ref{fig:Ksurface-loops}.

 \begin{figure}[ht!]
  \centering
  \includegraphics[scale=0.9]{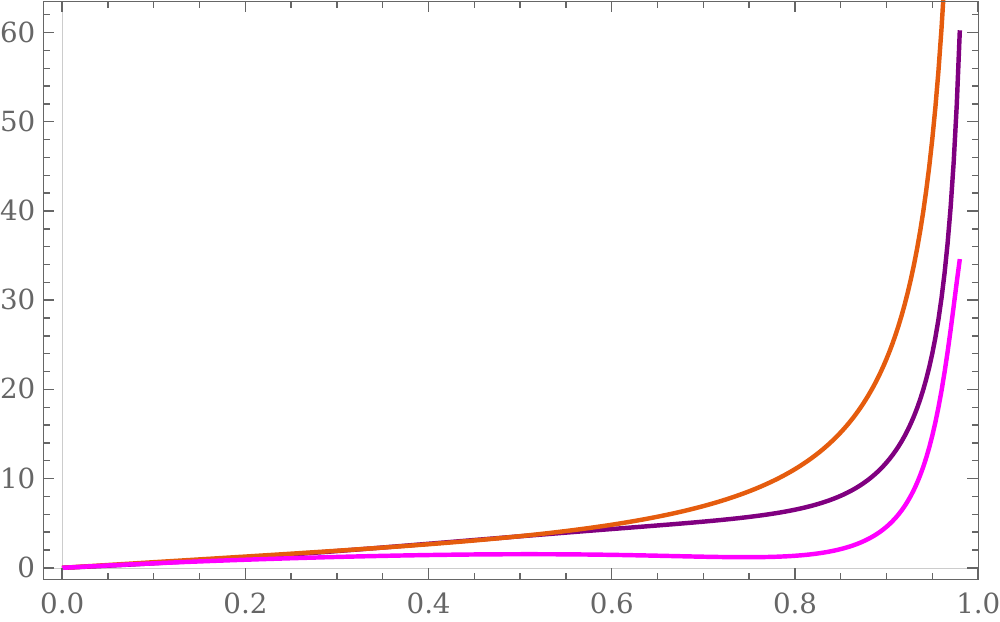}
   \caption{A plot of $\mathbb K\circ\phi$ on the line $\ell=\{i t:t\in [0,0.98]$ using  200 nonzero terms of its Maclaurin series; in orange $\phi(\ell)$ traverses the circle $\{\omega=1+e^{-i\tau}:\tau\in [0,2\pi-\epsilon]$ where $|\epsilon|\sim 10^{-52}$. 
   This shows the behavior of $\mathbb{K}$ very close to the logarithmic  singularity at $0$ on the second Riemann sheet. Purple: approaching the singularity at $0$ on the third Riemann sheet after looping twice around $1$. 
Magenta: approaching the singularity at $1$ after looping around  the singularities at $1$ on the first sheet  and $0$ on the second sheet.  Approaching a singularity at this remarkable distance allows for determining the {\em type} of singularity, and detailed information about the {\em local behavior} there.}

\label{fig:Ksurface-loops}
\end{figure}

\subsection{Accurate extrapolation of series coefficients}\label{Scoefextrap}

Let $Q_n$ be the polynomial $(P_n\circ\phi)_n$. The other side of the coin we have described in Note \ref{NExpGrowth} is that  the function $Q_n\circ\psi$ {\bf can be used to  extrapolate}  the coefficients of $P_n$ with high accuracy. Indeed, essentially the same calculation as in Note \ref{NExpGrowth} shows that, if the $n$th Maclaurin coefficient of a function is $\mathcal{F}_{P_n,W}$ is $b_n$, this coefficient differs from the calculated $(Q_n\circ\psi)^{(n)}(0)$ by an exponentially small relative error, $\psi'(0)^{n}b_n$.

   {\em Prima facie}, discarding a specific part of the information as the best way to obtain more information may seem even more paradoxical than what we discussed in Note \ref{NExpGrowth}, where recall that we showed that it is {\it necessary} to truncate $P_n\circ\phi$ after $n$ terms, i.e., as $(P_n\circ\phi)_n$. Some further comments may help to demystify this phenomenon. If a function $F$ is analytic in $\DD$,  and if $\partial\DD$ is a natural boundary and nothing more is known about $F$ except $P_n$, then it is intuitively clear that $P_n$ is already the best approximant of $F$ (this is shown rigorously in the proof of Theorem \ref{T1}). Hence the natural place to analyze Maclaurin approximations is inside the unit disk $\DD$. Therefore, if we kept $P_n\circ\phi$ without truncation the approximation would be worse (furthermore, $P_n\circ\phi\circ\psi$ is the identity resulting evidently in no change in accuracy).

We illustrate this procedure on functions which  are analytic on $\Omega\left(\hat\CC\setminus\{(0),1,\infty\}\right)$ (see Definition \ref{DOmega()}), but it works similarly on other Riemann surfaces. In this example, the maps $\psi$ and $\phi$ are given in terms of the elliptic nome function, as in \eqref{eq:inome0} and \eqref{eq:Nome1}.

	Then we obtain exponentially accurate results for the coefficients of the extrapolation of $P_n$ to $\tilde{P}_n=\sum_{k=0}^mp_k\omega^k$, where for $k>n$ we use  $p_k=[(P_n\circ\phi)\circ\psi]^{(k)}(0)$. To illustrate how efficient this extrapolation is, consider the previously discussed  test cases, $\log(\omega^{-1}\log(1-\omega))$ and the complete elliptic integral of the first kind, $\mathbb K(\omega)$.
Then if we start with a Maclaurin polynomial of degree 8, the 9th coefficient is predicted with relative error $\epsilon_9\sim 5\cdot 10^{-9}$ and $10^{-9}$, respectively.

Even more strikingly, the subsequent 471 (!)\footnote{There is nothing intrinsically special about the order 471; it relates to the degree of a large-order Maclaurin polynomial already stored on the computer.}  coefficients are predicted with maximum relative error of $0.3\%$ and $0.13\%$, respectively. For $\mathbb K(\omega)$, this $480$-term extrapolated series is shown in Figure \ref{PlotEllipticK} as the dashed-blue curve, and is indistinguishable from the optimally extrapolated function.

	Given  instead the first 60 coefficients, the 61st coefficient is predicted with relative error $$\epsilon_{61}\sim5\cdot 10^{-73}\ \text{and } 10^{-71}\ , \ \text{respectively,}$$
	and the next 59 are predicted with maximum relative error $10^{-51}$ and $5 \cdot 10^{-49}$, respectively! 
	
		Even Pad\'e approximants can achieve impressive extrapolation: the same 60 additional coefficients are predicted with relative errors $<10^{-13}$.  For Pad\'e, the procedure  consists of calculating $P_{120}\left(\Pi_{30,30}(F)\right)$ where $\Pi_{30,30}(F)$ is the diagonal [30,30] Pad\'e approximant of one of the functions above. The reason of the improvement is similar to that described above, but since convergence of Pad\'e occurs in the sense of capacity theory only, the corresponding precision statements are only qualitative.

	The accuracy of coefficient extrapolation depends of course on the complexity of the Riemann surface $\Omega$. On one of the most intricate surfaces in our examples, the Riemann surface $\Omega_{\ZZ}$ relevant to $P_{\rm I}$ discussed in Section \ref{sec:P1-unif}, our optimal procedure predicts the $61st$ term instead with $\sim 4\cdot 10^{-25}$ relative error  (Pad\'e can also achieve $\sim 10^{-18}$). In the case of solutions of ODEs, such as $P_{\rm I}$, significant further improvement can be obtained by singularity elimination, \S\ref{sec:elim}.

	This accuracy can be further increased by bootstrapping the information:. If $\psi(\omega)=\sum_{k\in\NN}\kappa_n \omega^n$ and $F\circ\phi=\sum_{k\ge 0}f_n z^n$ and $\{f_k\}_{0\le k\le n}$ are given, then the error in the next coefficients can be calculated in closed form,
	$$ \kappa_1^n f_n ;\ \ \kappa_1^{n+1}f_{n+1}+ n \kappa_1^{n-1} f_n; \ \ \kappa_1^{n+2} f_{n+2}+(n+1)\kappa_2\kappa_1^{n+1}f_{n+1}+\left(n \kappa_3+\frac{n(n-1)}{2}\kappa_2^2\right)\kappa_1^{n-1}f_n,...$$
	Then, relying on the exponential accuracy of previously calculated coefficients, these can be used to compensate some of the errors in subsequent ones. We will however not pursue this method further here.

An interesting potential application of these series extrapolation methods would be to find accurate critical exponents by matching the asymptotic behavior of coefficients.

\subsection{Resurgent Functions}
\label{sec:resurgent}

 Resurgent functions are ubiquitous as solutions of equations in analysis, fundamentally due to the
closure of the family of resurgent functions under {\em virtually all} operations used in solving analytic equations (see \cite{costin-book} and references therein). There is also a growing body of evidence, both numerical and analytical, for resurgent functions in physical systems, for example in quantum mechanics, quantum field theory, statistical field theory and string theory
  \cite{voros,delabaere,Delabaere,Berry,ZinnJustin:2004ib,marino-matrix,Gaiotto:2009hg,Garoufalidis:2010ya,Aniceto:2011nu,Dunne:2012ae,Dunne:2016nmc,Dorigoni:2015dha,Aniceto:2015rua,Gukov:2016njj,kontsevich}.

Many classes of problems are known to have resurgent solutions. Take, as a first example, solutions of generic (see \cite{Duke})  meromorphic ODEs that admit asymptotic series at infinity.  After normalization of the variables, the system can be brought to the standard form, $y'(x)=\Lambda y +x^{-1} By+g(y; x)$, where $\Lambda$ and $B$ are diagonal matrices with constant coefficients, and $g$ contains the higher order terms in the linearization, as well as the nonlinear terms. 
The data $\Lambda$ and $B$ from the linearized problem determines the Riemann surface of $F$ as well as the nature of the Borel plane singularities of $y$ (\cite{Ecalle, Sauzin, Duke}).
 These Borel plane singularities are all of the form indicated in (\ref{eq:res-sing}). Difference and q-difference equations of roughly the same form  \cite{Braaksma,Ramis}, and also solutions of special classes of PDEs \cite{PDE1,PDE2,PDE3,PDE4,PDE5}, are also known to have resurgent solutions, and similar comments and methods apply.

 In this paper we only partially exploit this rich algebraic structure provided by \'Ecalle's bridge equations.  Further implications of this algebraic structure will be described in future work.

\subsection{Probing a Riemann surface empirically}\label{Sempir}
In the cases when the Riemann surfaces $\Omega$ need to be reconstructed empirically by the methods described in the following sections, uniformization provides a very sensitive tool for verifying this $\Omega$. Indeed, if $\Omega$ is incorrect, then $\psi$ places singularities inside $\DD$, and these are clearly  visible in an empirical $n$th root test.

\section{Singularity elimination}
\label{sec:elim}

In this Section we introduce a new {\it singularity elimination} procedure, which is a constructive two-step process: (i) an invertible linear operator (a convolution) first transforms a given singularity into a square root singularity while keeping its position fixed;  (ii) composition with a suitable conformal map then eliminates the singularity,  again, preserving the location.
The principal motivation underlying this analysis is to develop new methods to give precise determinations of the location and 
nature of a chosen singularity, and information about the local behavior of the function near this chosen singularity. This procedure provides a new method to probe, with extremely high precision, the singularities of functions which are (or are suspected to be) resurgent. These  have {\em elementary singularities} of the form in \eqref{eq:res-sing}. The singularity elimination operator can, in principle, be used to eliminate more and more singularities,  but in applications it is generally more efficient to  eliminate singularities of interest  one at a time,
leaving the others essentially unchanged.

For such functions, we show that there exist  invertible linear operators that regularize the singularities, in the sense of transforming them into points of analyticity. 
In general, these  operations cannot be reduced to conformal maps.  For example, a function with a singularity of the type $\log (1-\omega )A(\omega)+B(\omega)$, with $A, B$ holomorphic at $\omega=1$, cannot be composed with a holomorphic map $\phi$ with $\phi(0)=0,\phi(1)=1$ such that the composition is analytic at $\omega=1$. The proof is straightforward\footnote{Indeed, taking $A=1, B=0$, and then $A=1$ and $B(\omega)=\omega$, we see that $\phi$ must be analytic at $1$, hence $\phi(1+s)=1+o(s)$. But then $\log(1-\phi)$ is unbounded at $1$.}. 
Uniformizing the surface of the log at $\omega=1$ results in moving the singularity from $1$ to infinity.
However, we show that instead it is possible to construct  linear operators which simply remove the log singularity while mapping the interval $J=[0,1]$ (where they are nondecreasing)  onto itself, hence mapping $0$ to $0$ and $1$ to $1$, and a neighborhood of $J$ conformally onto a neighborhood of $J$. Hence the operators remove the singularity without moving it. 
That these methods are distinct from complete uniformization also follows from the fact that any function of the form
\begin{equation}
  \label{eq:sumbr}
  \sum_{k=1}^N c_k (\omega-\omega_k)^\alpha
\end{equation}
(with a common exponent $\alpha$)
can be reduced to a rational one by the simple procedures described below, but its Riemann surface is not uniformizable by any simple map for general $\omega_k$. 

\subsection{Properties of convolution and the singularity elimination procedure}
\label{sec:elim-th}

In this section we analyze the general structure of singularities of Laplace convolution, see \eqref{eq:defconvl}, and describe the process of elimination of elementary singularities. We begin with some definitions and a description of the procedure. We also analyze the singularities of convolutions of general analytic functions $F, G$ in the neighborhood of the singularities of $F, G$.
\begin{Definition}\label{Delim}{\rm 
 
Laplace convolution of $F$ and $G$ is defined as

    \begin{equation}
      \label{eq:defconvl}
      (F*G)(\omega)=\int_0^\omega F(s)G(\omega-s)ds
    \end{equation}
    }\end{Definition}
    
\begin{Note}\label{N:sing1}
{\rm  
 \z    {\bf \refstepcounter{minisection1}  \label{i:choicebeta} \arabic{minisection1}. } 
       For $\Re\beta>-1$,  an important role is played by the linear operator $L_\beta$ of convolution with $\omega^\beta$, followed by multiplication by $\omega^{-\beta}$:
       \begin{equation}
         \label{eq:defLbeta}
(L_{\beta} F)(\omega) =\omega^{-\beta}\int_0^\omega F(s) (\omega-s) ^{\beta} ds         
       \end{equation}
As shown below in Lemma \ref{L:Rsurface-convo} and  Lemma \ref{L:manip}, if $F$ is analytic in $\DD$ with an elementary singularity of type \eqref{eq:res-sing} at $\omega_0$, then  $L_{\beta}F$ is also analytic in $\DD$ with an elementary singularity of type \eqref{eq:res-sing}, where $\alpha$ is replaced by $\alpha+\beta+1$. In other words, the convolution operator $L_\beta$ in \eqref{eq:defLbeta} allows us to modify the nature of the chosen singularity.  Note that $L_\beta$ is both linear and invertible. 

\z {\bf  \refstepcounter{minisection1}  \label{i:squareroot} \arabic{minisection1}. } 
 Let us normalize so that the chosen singularity is at $\omega_0=1$. For singularity elimination, it is convenient to  choose $\beta$ so that  $\alpha+\beta+1=k/2$, for some nonnegative odd integer $k$, so that the singularity of  $L_{\beta}F$ becomes a square-root branch point
 \begin{equation}
         \label{eq:newtype}
 (L_{\beta} F)(\omega)  =  \,  (1-\omega)^{k/2}A_1(\omega)+B_1(\omega)
       \end{equation}
 in a neighborhood of $1$ where the functions  $A_1, B_1$ are analytic at $\omega=1$.

\z {\bf \refstepcounter{minisection1} \label{mini12} \arabic{minisection1}. }   Let  $\omega=z\mapsto \phi(z)$ denote a conformal map of the unit disk $\DD$ to some Riemann surface $\Omega$, mapping $0$ to $0$, and $1$ to $1$, and such that $(\phi-1)$ has a double zero at $z=1$. Simple examples of such maps are 
      \begin{equation}
       \label{eq:r1r2}
\phi_0(z)=2z-z^2;\ \     \phi_1(z)=   \frac{4z}{(1+z)^2};\ \ \text{ and } \phi_2(z)= \frac{2z}{1+z^2} \quad .
\end{equation}

 \z {\bf \refstepcounter{minisection1} \label{mini13} \arabic{minisection1}. }   Replacing in \eqref{eq:newtype} $\omega$ by any $\phi(z)$ for any $\phi$ in \eqref{eq:r1r2} we see that $(L_\beta F)\circ\phi (z)$  is now analytic at both $z=0$ and $z=1$.

 \z {\bf \refstepcounter{minisection1} \label{mini131} \arabic{minisection1}.} The inverse functions of the maps above are elementary, and inverting  $(L_\beta F)\circ\phi$
  results in a  representation of $F$ as a series of special functions. But even if, for a more complicated $\phi$, inverting convolution can only be done by some (convergent in the limit) numerical scheme, we reiterate that the purpose here is to most accurately recover an unknown function $F$ from truncated Maclaurin series, and not necessarily to provide economical approximations of a known function.
     
 \z  {\bf \refstepcounter{minisection1} \label{mini14} \arabic{minisection1}. }   Lemma \ref{Lsing} below generalizes this idea to much more general singularities than the elementary singularities in \eqref{eq:res-sing}, and can be used for their elimination. However, in this paper we will not attempt to classify in general the singularities that can be eliminated.
    
 }   \end{Note}
 
\subsection{Singularity Transformation}
 \label{sec:sing-transf}

\begin{Lemma}[Preservation of Riemann surfaces by $L_\beta$] \label{L:Rsurface-convo} Assume $\Omega=\Omega\left(\hat\CC\setminus\{(0), S\}\right)$ (recall Definition \ref{DOmega()}). We generalize $L_\beta$ when $\Re \beta>-1$, by interpreting the convolution in \eqref{eq:defLbeta} as an integral along a smooth curve $\gamma:[0,1]\to \Omega$ with $\gamma(0)=0$ and $\gamma(1)=\omega$.

Then, if $F$ is analytic on $\Omega$,  $L_\beta F$ is also analytic on $\Omega$.  
\end{Lemma}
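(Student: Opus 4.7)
I plan to prove the claim in three steps: set up the branch of $(\omega-s)^\beta$ and verify convergence; establish path-independence within $\Omega$ so that $L_\beta F$ descends to a function on $\Omega$; and prove local analyticity. For the setup, fix $\omega\in\Omega$ and a smooth path $\gamma:[0,1]\to\Omega$ from $0$ to $\omega$; after a small perturbation I may assume $\pi(\gamma(t))\ne\pi(\omega)$ for $t<1$, so the projected path returns to $\pi(\omega)$ only at the endpoint. I choose the branch of $(\omega-s)^\beta$ as the principal branch in a local chart around the endpoint $\omega$, extended continuously backwards along $\gamma$. Convergence of the integral at the endpoint follows from $|\omega-s|^{\Re\beta}$ being integrable for $\Re\beta>-1$, and $F$ is locally bounded since it is analytic on $\Omega$.

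For path-independence, given two smooth paths $\gamma_0,\gamma_1$ from $0$ to $\omega$ in the same homotopy class in $\Omega$ (equivalently, their projections are homotopic in $\hat\CC\setminus S$ rel endpoints), I arrange via a perturbation that the connecting homotopy $H$ in $\Omega$ satisfies $\pi(H(t,u))\ne\pi(\omega)$ except at $t=1$. Then the integrand $s\mapsto F(s)(\omega-s)^\beta$ is analytic in $s$ along every intermediate path, with the chosen branch extended continuously along the deformation, so Cauchy's theorem applied to the portion before a fixed common terminal segment yields equality of the integrals; the contribution of the terminal segment is identical for both paths by construction.

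For local analyticity at $\omega_0\in\Omega$, I use for $\omega$ in a chart $U\ni\omega_0$ the path $\gamma_0\cdot\sigma_\omega$, where $\gamma_0$ is a fixed path from $0$ to $\omega_0$ and $\sigma_\omega$ is the straight segment from $\omega_0$ to $\omega$ in $U$. The integral splits as $I_1(\omega)+I_2(\omega)$, where $I_1(\omega)=\int_{\gamma_0}F(s)(\omega-s)^\beta\,ds$ depends analytically on $\omega$ (since $\omega-s$ stays bounded away from $0$ on $\gamma_0$ for $\omega$ sufficiently close to $\omega_0$), and where substituting $s=\omega_0+t(\omega-\omega_0)$ gives $I_2(\omega)=(\omega-\omega_0)^{\beta+1}\int_0^1 F(\omega_0+t(\omega-\omega_0))(1-t)^\beta\,dt$. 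The apparent non-analytic prefactor $(\omega-\omega_0)^{\beta+1}$ of $I_2$ is exactly cancelled by a matching contribution from the antiderivative evaluation at the upper endpoint $s=\omega_0$ of $I_1$; this cancellation is straightforward to check on monomials $F(s)=(s-\omega_0)^k$ via the beta-function identity $B(k+1,\beta+1)=\sum_j\binom{k}{j}(-1)^j/(\beta+j+1)$, and extends to general analytic $F$ by the uniform convergence of its Taylor expansion on a compact subchart. Multiplication by the locally analytic factor $\omega^{-\beta}$ gives analyticity of $L_\beta F$ at $\omega_0\ne 0$; the remaining case $\omega_0=0$ is handled by the manifestly analytic representation $L_\beta F(\omega)=\omega\int_0^1 F(\omega t)(1-t)^\beta\,dt$, obtained by the substitution $s=\omega t$ on the straight-line path valid on the first sheet.

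The main obstacle I anticipate is the consistent tracking of the branch of $(\omega-s)^\beta$ as both $s$ moves along a path in $\Omega$ and $\omega$ varies within $\Omega$. I would handle this throughout by reducing to the principal-branch convention in local charts and invoking the small-perturbation step above to ensure every path encounters the branch point of $(\omega-s)^\beta$ only at its endpoint, where the resulting singularity is integrable and uniquely specifies the continuous branch.
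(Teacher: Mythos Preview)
Your overall architecture---set up the branch, establish path-independence, then prove local analyticity by splitting the integral near the target point---is sound and close to the paper's approach. The path-independence discussion is actually more careful than the paper's (which treats it implicitly). However, there is a concrete error in your local-analyticity step.

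You split at the point $\omega_0$ itself, writing $I_1(\omega)=\int_{\gamma_0}F(s)(\omega-s)^\beta\,ds$ over the fixed path $\gamma_0$ from $0$ to $\omega_0$, and claim $I_1$ is analytic in $\omega$ ``since $\omega-s$ stays bounded away from $0$ on $\gamma_0$.'' That is false: $\gamma_0$ ends at $s=\omega_0$, so $\omega-s$ takes the value $\omega-\omega_0$, which tends to $0$ as $\omega\to\omega_0$. Thus for $\Re\beta<0$ the integrand is not even bounded near the endpoint, and for general $\beta$ the integral is not manifestly analytic in $\omega$. You then contradict yourself by invoking a ``matching contribution from the antiderivative evaluation at the upper endpoint $s=\omega_0$ of $I_1$'' to cancel the non-analytic prefactor of $I_2$; but $I_1$ is a definite integral along a path, not an evaluated antiderivative, and the beta-identity you quote does not by itself exhibit the required cancellation. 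The argument as written is circular: you are implicitly using the conclusion (that $I_1+I_2$ is analytic) to justify the cancellation.

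The fix is simple and is exactly what the paper does: split at a point $\omega_1$ on the path that lies \emph{strictly before} $\omega_0$ (say $\omega_1\in\DD_\epsilon(\omega_0)$ with $\DD_{2\epsilon}(\omega_0)\subset\Omega$). Then on the first piece $\omega-s$ genuinely stays bounded away from $0$ for all $\omega$ near $\omega_0$, so that piece is analytic in $\omega$ by differentiation under the integral. For the tail piece from $\omega_1$ to $\omega$, use the substitution $s=\omega-t$ (not $s=\omega_0+t(\omega-\omega_0)$), obtaining $\int_0^{\omega-\omega_1}F(\omega-t)\,t^\beta\,dt$. Here the prefactor issue disappears because $t^\beta$ carries no $\omega$-dependence, and analyticity in $\omega$ follows from analyticity of $F$ in $\DD_{2\epsilon}(\omega_0)$ together with the Leibniz rule (the upper limit $\omega-\omega_1$ is bounded away from $0$). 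Your own handling of the case $\omega_0=0$ via $s=\omega t$ is correct and is essentially the same idea; you just need to transplant it to general $\omega_0$ by choosing the splitting point away from $\omega_0$.
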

\begin{proof}
  Analyticity in $\DD$ is easily shown by replacing $F$ by its Maclaurin series, and then using dominated convergence to integrate term by term. Then we calculate
  \begin{equation}
    \label{eq:convopow}
 L_{\beta}\left(\sum_{k=0}^{\infty} a_k \omega^k\right)= \sum_{k=0}^{\infty} \frac{\Gamma (\beta+1) \Gamma (k+1) }{\Gamma (\beta+k+2)}\, a_k \, \omega^{k+1} \quad ; \quad |\omega|<1
  \end{equation}
  For large $k$ we have
  $$ \frac{\Gamma (k+1)}{\Gamma(\beta+k+2)}=O(k^{-1-\beta}) $$
 implying that  analyticity in $\DD$ is preserved.

  Next, examining \eqref{eq:defLbeta} we interpret $F(s)ds$ as a bounded complex measure along any path from $0$ to $\omega$ and note that the integrand, and hence the integral over $\gamma$ are manifestly analytic at all points in $\gamma([0,1))$. Let $\epsilon>0$ be such that $\DD_{2\epsilon}(\omega)\subset \Omega$, let $\omega_1\in\DD_{\epsilon}(\omega)\in \gamma([0,1))$ and write the integral along $\gamma$ as $\int_0^\omega =\int_0^{\omega_1}+\int_{\omega_1}^{\omega}$. The first integral is analytic by the argument above. The second integral can be replaced by a straight line integral from $\omega_1$ to $\omega$, where we change variable $s=\omega-t$ to get
  $$\int_0^{\omega-\omega_1}F(\omega-t)t^\beta dt$$
  which is analytic since $F$ is analytic in $\DD_{2\epsilon}(\omega)$
\end{proof}
\begin{Note}
  {\rm Convolving more general functions with singularities {\bf alters} the Riemann surface, in general. For instance $(1-\omega)^{1/2}*(1-\omega)^{1/2}$, is an elementary function with a square root type singularity at $1$ and a log-type singularity at $2$. We refer to \cite{Sauzin} for the theory of the location of singularities generated via convolution.}
\end{Note}
For analyzing the {\bf type}  of singularities of $F*G$ for more general $F,G$ we restrict our attention to star-shaped domains $\mathcal{N}\supset \DD$ (meaning that for each point in $\mathcal{N}$,  the line segment connecting it to $0$ is also in $\mathcal{N}$)
 \begin{Lemma}[Analyticity of Convolution]
   \label{Lconvolution}
    Let $\mathcal{N}\supset \DD$ be a star-shaped domain in $\CC$. If  $F$ and $G$ are analytic in $\mathcal{N}$, then so is $F*G$.
\end{Lemma}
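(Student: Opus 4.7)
My plan is to reduce the convolution to an integral over the fixed interval $[0,1]$ via a radial change of variables, and then apply Morera's theorem combined with Fubini to swap the order of integration. First, for $\omega\in\mathcal{N}$, I would use the star-shaped hypothesis to select the straight segment from $0$ to $\omega$ as the path of integration in \eqref{eq:defconvl}, and then change variables $s=t\omega$ to rewrite
$$(F*G)(\omega) \;=\; \omega\int_0^1 F(t\omega)\,G((1-t)\omega)\,dt.$$
Since both $t\omega$ and $(1-t)\omega$ lie on the segment $[0,\omega]\subset\mathcal{N}$ by star-shapedness with respect to $0$, the integrand $h(\omega,t):=F(t\omega)\,G((1-t)\omega)$ is well-defined and jointly continuous on $\mathcal{N}\times[0,1]$, and for each fixed $t\in[0,1]$ is analytic in $\omega\in\mathcal{N}$.

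Next, I would show that $g(\omega):=\int_0^1 h(\omega,t)\,dt$ is analytic on $\mathcal{N}$ via Morera's theorem. For any closed triangle $T\subset\mathcal{N}$, the function $h$ is bounded and continuous on the compact set $T\times[0,1]$, so Fubini's theorem permits the exchange
$$\oint_T g(\omega)\,d\omega \;=\; \int_0^1 \Bigl(\oint_T h(\omega,t)\,d\omega\Bigr)\,dt \;=\; 0,$$
where the inner contour integral vanishes by Cauchy's theorem applied to the analytic function $\omega\mapsto h(\omega,t)$. Morera's theorem then gives analyticity of $g$, and hence of $(F*G)(\omega)=\omega\,g(\omega)$. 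Path-independence of the original integral follows from Cauchy's theorem on the simply connected domain $\mathcal{N}$, so the value of the convolution does not depend on the choice of curve from $0$ to $\omega$ along which $F(s)G(\omega-s)$ is analytic.

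There is no serious obstacle here; the only point that genuinely uses the hypothesis is verifying that the radial path keeps both $s$ and $\omega-s$ inside $\mathcal{N}$ for all $s\in[0,\omega]$, which is precisely the star-shaped assumption with respect to $0$. This is what distinguishes the present lemma from Lemma \ref{L:Rsurface-convo}, where $\mathcal{N}$ is replaced by a general Riemann surface $\Omega$ and the integration must be taken over an arbitrary smooth curve $\gamma$, and where correspondingly the more delicate second half of the proof of Lemma \ref{L:Rsurface-convo} (splitting the integral at a nearby point and reducing to a straight-line integral in a small disk) is required.
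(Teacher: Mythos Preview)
Your proof is correct and follows essentially the same idea as the paper's, which is a one-line remark: ``This is clear if we interpret $F(s)\,ds$ in \eqref{eq:defconvl} as a finite complex measure on compact sets of $\mathcal{N}$.'' Your change of variables $s=t\omega$ together with the Morera--Fubini argument is simply a more explicit rendering of that sentence, and your observation that both $t\omega$ and $(1-t)\omega$ stay on the segment $[0,\omega]\subset\mathcal{N}$ is exactly the point where the star-shaped hypothesis is used.
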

\begin{proof}
  This is clear if we interpret $F(s)ds$ in \eqref{eq:defconvl} as a finite complex measure on compact sets of $\mathcal{N}$.

  \end{proof}
  The next Lemma describes how singularities at $0$ and $\omega_0$ interact. For simplicity of notation, we {\bf normalize} $\omega$ so that $\omega_0=1$. 
    
  \begin{Lemma}[{Calculation of singularities of convolution}]\label{Lsing}
    \begin{enumerate}
    \item  Assume $F$ is analytic in a neighborhood of $(0,1+\epsilon]$, possibly singular at zero but $F \in L^1((0,1+\epsilon))$, $G$  is analytic in a neighborhood of $[0,1+\epsilon]\setminus[1,1+\epsilon]$ with continuous lateral limits (possibly different) on the cut. 
   Assume further that there exist two functions, $S$ analytic in $\DD_{\epsilon}(1)\setminus [1,1+\epsilon]$, and $B_1$ is analytic in the disk $\DD_{\epsilon}(1)$ such that
     \begin{equation}
     \label{eq:singexp}
 (F*G)_1(\omega):=    \int_1^\omega F(\omega-s)G(s)ds=S(\omega)+B_1(\omega)
   \end{equation}
   Then,
     \begin{equation}
     \label{eq:convosing}
\int_0^\omega F(\omega-s)G(s)ds=S(\omega)+B(\omega)
\end{equation}
where $B(\omega)$ is analytic in $\DD_{\epsilon}(1)$.

The result can be adapted to the case where instead $G=G_1^{(k)}$ for some $k\in \NN$, and $G_1$ satisfies the assumptions of the lemma.
  
\item The result extends to  Riemann surfaces $\Omega$ as in Lemma \ref{L:Rsurface-convo} if $F(\omega)=\omega^{\beta}$, $\Re \beta >-1$. More precisely, recalling that we normalized $\omega_0$ so that its projection on $\CC$ is 1, we choose a line segment in $\Omega$ emanating from $\omega_0$ whose projection in $\CC$ is $[1,1+\epsilon)$ and replace the branch jump across $[1,1+\epsilon)$ by the branch jump across this segment.
    \end{enumerate}
  \end{Lemma}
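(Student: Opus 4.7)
The plan is to decompose the convolution through the singular point $\omega_0 = 1$. For $\omega \in \DD_\epsilon(1) \setminus [1, 1+\epsilon]$, I would choose a path from $0$ to $\omega$ passing through $1$ and staying off the cut of $G$, and write
\[
\int_0^\omega F(\omega-s)G(s)\,ds \;=\; H(\omega) \;+\; (F*G)_1(\omega),\qquad H(\omega) := \int_0^1 F(\omega-s)G(s)\,ds.
\]
The second summand equals $S(\omega)+B_1(\omega)$ by hypothesis, so setting $B := H + B_1$ reduces the statement to showing that $H$ is analytic throughout $\DD_\epsilon(1)$.

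The central observation is that $H$ has no branch discontinuity across the cut $[1,1+\epsilon]$. On the integration range $s \in [0,1]$ the factor $G(s)$ is analytic (its cut $[1,1+\epsilon]$ is disjoint from $[0,1)$), and for $\omega$ approaching a point $1+x \in (1,1+\epsilon)$ from either side, the argument $\omega - s$ ranges over a segment close to $[x,1+x] \subset (0,1+\epsilon]$ that lies strictly inside the analyticity region of $F$. The lateral limits of $F(\omega-s)$ therefore agree pointwise in $s \in [0,1]$. Dominated convergence, justified by boundedness of $G$ on $[0,1]$ together with $F \in L^1((0,1+\epsilon))$, lifts this pointwise identity to $H(1+x+i0^+) = H(1+x-i0^+)$; the same estimate gives continuity of $H$ on all of $\DD_\epsilon(1)$, including at the corner $\omega = 1$ where the singularity of $F$ at $0$ meets the integration endpoint $s = 1$.

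To promote continuity to analyticity, note that off the real diameter $(1-\epsilon,1+\epsilon)$ the argument $\omega - s$ stays uniformly bounded away from $0$ for $s \in [0,1]$, so the integrand is jointly holomorphic in $(\omega,s)$ on a complex neighborhood of $\{\omega\} \times [0,1]$ and differentiation under the integral gives holomorphy of $H$ there. Combining this off-axis holomorphy with the continuity across the real axis yields $H$ holomorphic on the whole disk by a standard Morera argument, in which triangles crossing the diameter are decomposed into pieces in each half-plane. Consequently $\int_0^\omega F(\omega-s)G(s)\,ds = S + B$ with $B = H + B_1$ analytic on $\DD_\epsilon(1)$. For Part 2 the identical decomposition works on the Riemann surface $\Omega$, with $[1,1+\epsilon)$ replaced by the chosen lift emanating from $\omega_0$; Lemma~\ref{L:Rsurface-convo} ensures that convolving with $F(\omega)=\omega^\beta$ preserves analyticity on $\Omega$, and the no-jump/Morera argument carries over verbatim. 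The principal technical obstacle is the uniform control of $H$ near the corner $\omega = 1$, which is precisely where $F \in L^1$ is essential: it allows the contribution from an arbitrarily small neighborhood of $s = 1$ to be bounded uniformly in $\omega \in \DD_\epsilon(1)$, completing the continuity step on which the Morera argument depends.
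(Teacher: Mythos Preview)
Your argument is correct and follows essentially the same route as the paper: decompose $\int_0^\omega = \int_0^1 + \int_1^\omega$, show the first piece $H$ is single-valued and continuous across the would-be cut, and conclude analyticity via Morera. The paper phrases this equivalently as ``the jump of $\int_0^\omega$ equals the jump of $(F*G)_1$, hence equals the jump of $S$'', then applies Morera to $B=\int_0^\omega - S$; your version, working directly with $H$, is the same computation with the bookkeeping rearranged. One omission: you do not treat the variant $G=G_1^{(k)}$ stated at the end of Part~1; the paper handles it by splitting $\int_0^\omega=\int_0^{1/2}+\int_{1/2}^\omega$ and integrating by parts $k$ times in the second integral to remove the derivatives before invoking the base case.
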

  \begin{proof}[Proof of Lemma \ref{Lsing}]
1.   We note that the jumps across the cut $[1,1+\epsilon)$ of  $\int_0^\omega F(\omega-s)G(s)ds$ and of $S$ must coincide, and it evidently also coincides with the jump across the cut\footnote{As usual, by ``jump across the cut'' we mean the upper limit minus the lower limit along the cut.} $[1,1+\epsilon)$ of  $\int_1^\omega F(\omega-s)G(s)ds$ (since the integrand is analytic in $\DD$. Using Lemma \ref{Lconvolution} and the assumptions of Lemma \ref{Lsing}, we see that $B(\omega):=\int_0^\omega F(\omega-s)G(s)ds -S(\omega)$ is analytic in $\DD_{\epsilon}(1)\setminus [1,1+\epsilon]$ and continuous in
  $\DD_{\epsilon}(1)$, and Morera's theorem implies that $B$ is analytic in  $\DD_{\epsilon}(1)$.

  For a general $k$ we write $\int_0^\omega=\int_0^{1/2}+\int_{1/2}^\omega$. In the second integral we integrate by parts $k$ times to eliminate the derivatives of $G_1$ and apply the first part.

     2. Follows in the same way, noting that only a neighborhood of $\omega_0$ is involved in both the statement and in the proof of 1. 
\end{proof}
\begin{Note}
{\rm 

  Lemma \ref{Lsing} is a ``localization'' lemma, whose point is that $S(\omega)$ can be calculated from local expansions of $F$ and $G$ at $0$ and $1$ respectively, whenever such expansions exist\footnotemark, since $\int_1^\omega F(\omega-s)G(s)ds$ only depends on these local expansions.
}\end{Note}
\begin{Note}{\rm
    \begin{enumerate}
\item
    
Clearly, for the elementary singularities of type \eqref{eq:res-sing}, the local expansion assumed in \eqref{eq:singexp} always exists.
(Note that other singularities of $F$ and $G$, including subtler singularities on the second Riemann sheet, may mean that the local expansion of $F$ at zero and that of $G$ at $1$ have radius of convergence $<1$.)

   \item
The convolution operator $L_{\beta}$ is important in applications, as it gives a simple way to transform the nature of the singularity. Lemma \ref{L:manip} below shows that the effect of this operator can be implemented directly on the original expansion coefficients.

\item
 The transformation of singularities can also be understood in terms of fractional derivatives, as is clear from the representation in (\ref{eq:defLbeta}). See \cite{gokce} for an application to Borel transforms. 
   \end{enumerate}
   
}\end{Note}
\footnotetext{Very generally, even in the absence of local expansions, Plemelj's formulas \cite{Ablowitz}  give a local representation $S(\omega)$ in the form $ \frac{1}{2\pi i}\int_0^{1+\epsilon}(\omega-\tau)^{-1}\int_1^\lambda F(\lambda-s)\Delta G(s)dsd\tau$, where  $\Delta G$ is the jump across the cut of $G$. }

\begin{Lemma}\label{L:manip}
    Assume $F$ is analytic in $\DD$, and at $\omega=1$ it has an elementary singularity of type \eqref{eq:res-sing}.  Then, for $\Re\beta>-1$ the convolution $L_{\beta} F$ in (\ref{eq:defLbeta}) has an elementary singularity of type \eqref{eq:res-sing} with $\alpha$ replaced by $\alpha+\beta+1$.

\end{Lemma}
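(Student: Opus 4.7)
The plan is to use Lemma \ref{Lsing} to localize the analysis near $\omega = 1$, then explicitly evaluate the resulting integrals using Euler Beta-function identities. First, I would observe that the prefactor $\omega^{-\beta}$ in the definition \eqref{eq:defLbeta} of $L_\beta$ is analytic in a neighborhood of $\omega = 1$ and so can be absorbed into the analytic coefficient of any singular term. It thus suffices to analyze
\[
H(\omega):=\int_0^\omega F(s)(\omega-s)^\beta\,ds.
\]
I would then apply Lemma \ref{Lsing} with its ``$F$'' chosen as $u\mapsto u^\beta$ (which is $L^1$ near $0$ since $\Re\beta>-1$ and analytic elsewhere) and its ``$G$'' chosen as the given $F$ of this lemma. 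The hypotheses are met by the assumed elementary-singularity structure at $\omega=1$, and the conclusion identifies the singular part of $H$ at $\omega=1$ with that of the local integral
\[
H_1(\omega):=\int_1^\omega (\omega-s)^\beta F(s)\,ds,
\]
modulo a function analytic at $\omega=1$.

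For the generic case $\alpha\notin\ZZ$, I would substitute the local expansion $F(s)=(s-1)^\alpha A(s)+B(s)$ with $A(s)=\sum_k a_k(s-1)^k$ and evaluate term by term via the rescaling $s=1+t(\omega-1)$, which turns each integral into a standard Beta integral:
\[
\int_1^\omega (\omega-s)^\beta(s-1)^{\alpha+k}\,ds = (\omega-1)^{\alpha+\beta+k+1}\,\frac{\Gamma(\alpha+k+1)\Gamma(\beta+1)}{\Gamma(\alpha+\beta+k+2)}.
\]
Summing factors out $(\omega-1)^{\alpha+\beta+1}$ times an analytic series $A_1(\omega)$ with $A_1(1)=A(1)\,\Gamma(\alpha+1)\Gamma(\beta+1)/\Gamma(\alpha+\beta+2)\neq 0$ under the usual genericity on the Gamma arguments. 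Reinstating $\omega^{-\beta}$ modifies only the analytic coefficient, yielding the advertised exponent shift $\alpha\mapsto\alpha+\beta+1$. For the integer/log case $\alpha\in\ZZ$, I would differentiate the Beta identity above with respect to the exponent to introduce the $\ln(\omega-1)$ factor, and use that the outer $\frac{d^k}{d\omega^k}$ in \eqref{eq:res-sing} commutes with the resulting structure, preserving the logarithmic elementary form.

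The main technical obstacle is controlling the contribution of the analytic piece $B(s)$ of $F$ to the convolution. Even though $B$ is analytic at $1$, convolving against $(\omega-s)^\beta$ produces an additional term $(\omega-1)^{\beta+1}\tilde B_1(\omega)$ with $\tilde B_1$ analytic. When $\beta+1\in\NN$ this is itself analytic and absorbs harmlessly into the remainder $B_1$ of \eqref{eq:res-sing}; when $\beta+1$ is not a nonnegative integer, it is a genuine branch contribution distinct from the principal term of exponent $\alpha+\beta+1$, and one must either read ``elementary singularity with $\alpha$ replaced by $\alpha+\beta+1$'' as describing the principal singular term produced from $(s-1)^\alpha A(s)$, or observe that this subsidiary term comes only from the Taylor-analytic part of $F$ and does not interfere with the singularity-modification programme in \S\ref{sec:elim-th}.
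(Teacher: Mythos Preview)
Your overall strategy---invoke Lemma~\ref{Lsing} to localize, then evaluate the local integral via Beta identities---matches the paper's. The difference is in \emph{how} the singular part $S$ is extracted, and this is exactly where your ``technical obstacle'' arises and where the paper's route dissolves it.

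The paper does not compute the local integral $H_1(\omega)=\int_1^\omega(\omega-s)^\beta F(s)\,ds$ directly. Instead it computes the \emph{branch jump} of the convolution across $[1,1+\epsilon)$ (this is the origin of the factor $-2i\sin(\pi\alpha)$ in \eqref{eq:explf}, and of the jump integral \eqref{eq:logjump} in the logarithmic case). In the branch-jump computation the analytic piece $B(s)$ of $F$ contributes nothing, since $B$ has no discontinuity across the cut; only the term $(s-1)^\alpha A(s)$ survives, producing a pure $(1-\omega)^{\alpha+\beta+1}$ singularity. Your direct evaluation of $H_1$ instead produces the extra $(\omega-1)^{\beta+1}\tilde B_1(\omega)$ term, but this is an artifact of splitting $\int_0^\omega=\int_0^1+\int_1^\omega$: the complementary piece $\int_0^1(\omega-s)^\beta B(s)\,ds$ carries an exactly compensating $(\omega-1)^{\beta+1}$ branch (try $B\equiv 1$ to see this), so the full convolution has no such term. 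Working with the jump rather than with $H_1$ makes this cancellation automatic, and that is what the paper does.

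For the logarithmic case your idea of differentiating the Beta identity in the exponent is a legitimate alternative; the paper instead writes down the jump \eqref{eq:logjump} directly and matches coefficients \eqref{eq:bk}, then handles the outer $\frac{d^k}{d\omega^k}$ via integration by parts and the second part of Lemma~\ref{Lsing}. Your remark that this derivative ``commutes with the resulting structure'' would need to be made precise in the same way.
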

\begin{proof}
 We rely on Lemma \ref{Lsing}. Take first $\alpha\notin\ZZ$.  In view of the second part, we may assume $\Re\alpha>-1$. In our case, for small enough $t$,
   \begin{equation}
           \label{eq:explf}
    S(\omega)= -2i\sin(\pi \alpha)(1-\omega)^{\beta+\alpha+1}\int_0^1(1-t)^{\beta}t^{\alpha}A(1+t(1-\omega))+B_1(\omega)
  \end{equation}
  as seen by the change of variable $s=t \omega$.
 Writing near $1$, the local expansion $A(\omega)=\sum_{k=0}^\infty a_k (1-\omega)^k $ and inserting in \eqref{eq:explf}, we see that
\begin{equation}
  \label{eq:hAtOne}
S(\omega)= (1-\omega)^{\beta+\alpha+1} \sum_{k=0}^\infty a_k\frac{\sin(\alpha\pi) \Gamma(\beta+1)\Gamma(\alpha+k+1)}{\Gamma(\beta+\alpha+k+2)\, \sin((\beta+\alpha)\pi)}(1-\omega)^k
\end{equation}
proving the assertion in this case. (Note the obvious analogy to singularities of hypergeometric functions.)

\item

For a logarithmic singularity, $F(\omega)=\ln(1-\omega) A(\omega)+B(\omega)$, we have 
\begin{equation}
           \label{eq:explflog}
( L_\beta F)(\omega)=\omega^{-\beta}\int_{0}^{\zeta}\! \left( \zeta-u \right) ^{\beta}\ln  \left( -u \right) A \left( 
1+u \right) \,{\rm d}u\quad, \quad \zeta =\omega-1
\end{equation}
and the jump across the cut of $L_\beta F$ is 
\begin{equation}
  \label{eq:logjump}
S(\omega)=2\pi i \omega^{-\beta}\int_0^\zeta\left( \zeta-u \right) ^{\beta} A \left( 
1+u \right) \,{\rm d}u
\end{equation}
Writing $A(\omega)=\sum_{k=0}^\infty a_k (1-\omega)^k$ near $1$, one can verify that, in order to achieve the same branch jump with an $S$ having a cut $[1,\epsilon)$ we define  $S(\omega) =2\pi i \omega^{-\beta}(\omega-1)^{\beta+1}(1-e^{2\pi i \beta})^{-1}\sum_{k=0}^\infty b_k (\omega-1)^k$ where $(\omega-1)^{\beta+1}$ is defined to be positive on the upper part of the cut $[1,\infty)$ and
\begin{equation}
  \label{eq:bk}
b_{k}= \Gamma \left(\beta  +1 \right)  (-1)^k\, {\frac {\Gamma \left(k +1\right)}{\Gamma \left( \beta +k+2 \right) }}\, a_k
\end{equation}
proving the statement when $\alpha=0$. Using the second part of Lemma \ref{Lsing}, the general case follows from it by integration by parts in the branch jump formula. An explicit example is shown in \S\ref{sec:sing-elim-ex}.
\end{proof}

\subsection{Singularity Elimination Theorem}
\label{sec:sing-elim-theorem}
 \begin{Theorem}[Singularity Elimination]
  \label{thelim}
Assume $F$ is analytic on $\Omega=\Omega\left(\hat\CC\setminus \{(0) S\}\right)$ (recall Definition \ref{DOmega()}),  that $\omega\in\partial \Omega$ and that $F$ has a singularity of type \eqref{eq:res-sing} at $\omega$. Without loss of generality, we can assume that the projection of $\omega$ on $\hat\CC$ is $1$. Then the singularity can be eliminated by a combination of an appropriate $L_\beta$ and a composition with a rational map such as those in \eqref{eq:r1r2}. 

\end{Theorem}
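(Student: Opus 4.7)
The proof is an assembly of Lemma \ref{L:manip} with the algebraic observation that each map $\phi_j$ in \eqref{eq:r1r2} factors $1-\phi_j(z)$ as $(1-z)^2$ times a function analytic and non-vanishing at $z=1$. After normalizing so that the singular point is $\omega_0 = 1$, the plan is to first use an appropriate $L_\beta$ to convert the given elementary singularity into a clean half-integer branch point $(1-\omega)^{k/2}$ with $k$ an odd positive integer, and then to kill this branch point by composition with $\phi_j$, whose double zero at $z=1$ turns the half-integer exponent $k/2$ into the integer exponent $k$.

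\textbf{Step 1: Reduction to a half-integer exponent.} Given the singularity data from \eqref{eq:res-sing}, choose an odd positive integer $k$ with $k > 2\Re\alpha$, and set $\beta = k/2 - \alpha - 1$. Then $\Re\beta > -1$, so $L_\beta$ is defined as in \eqref{eq:defLbeta}; moreover $\beta$ is not an integer, since $k/2$ is a half-integer and $\alpha$ is either an integer or a non-integer complex number whose imaginary part keeps $\beta$ off $\ZZ$. Applying Lemma \ref{L:manip} shifts the exponent from $\alpha$ to $\alpha + \beta + 1 = k/2$, which is non-integer. Hence, whether the original singularity was of pure power type or of logarithmic-derivative type, the shifted singularity is of pure power type:
$$ (L_\beta F)(\omega) = (1-\omega)^{k/2} A_1(\omega) + B_1(\omega), $$
with $A_1, B_1$ analytic at $\omega = 1$. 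Lemma \ref{L:Rsurface-convo} guarantees that $L_\beta F$ remains analytic on all of $\Omega$, so the operation does not move or create singularities elsewhere.

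\textbf{Step 2: Removing the square root by a rational pullback.} Take $\phi = \phi_0(z) = 2z - z^2$ from \eqref{eq:r1r2}; a direct calculation gives $1 - \phi_0(z) = (1-z)^2$. On any simply connected neighborhood of $z = 1$ slit consistently with the branch cut of $(1-\omega)^{k/2}$, branches can be chosen so that
$$ (1 - \phi_0(z))^{k/2} = (1-z)^k, $$
a polynomial because $k \in \NN$. Substituting into the expansion of Step 1,
$$ \bigl( L_\beta F \circ \phi_0 \bigr)(z) = (1-z)^k \, A_1(\phi_0(z)) + B_1(\phi_0(z)), $$
which is analytic in a full neighborhood of $z = 1$. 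Analyticity near $z = 0$ is preserved since $\phi_0(0) = 0$ and $\phi_0$ is entire. Hence $L_\beta F \circ \phi_0$ is regular at the lift of $\omega_0$; that is, the chosen singularity has been eliminated. The maps $\phi_1$ and $\phi_2$ work identically, since $1-\phi_1(z) = (1-z)^2/(1+z)^2$ and $1-\phi_2(z) = (1-z)^2/(1+z^2)$.

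\textbf{Main obstacle.} The only substantive content beyond elementary algebra lies in Step 1 for the logarithmic branch of \eqref{eq:res-sing}, namely verifying that $L_\beta$ with non-integer $\beta$ actually eliminates the logarithm and produces a clean power-law singularity with the claimed exponent. This is carried out inside the proof of Lemma \ref{L:manip} by the explicit branch-jump calculation \eqref{eq:logjump}--\eqref{eq:bk}, together with the reduction from derivatives-of-log to logs via integration by parts supplied in Lemma \ref{Lsing}. With those ingredients in hand, the present theorem is a two-line assembly; the real content sits in the preceding convolution lemmas.
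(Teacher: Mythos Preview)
Your proof is correct and follows exactly the paper's approach: the paper's own proof is a single sentence citing Lemmas \ref{Lconvolution}, \ref{Lsing}, and items \ref{i:choicebeta}, \ref{mini12}, \ref{mini13} of Note \ref{N:sing1}, and you have simply unpacked those references into the two explicit steps. One minor quibble: your claim that $\beta$ is necessarily a non-integer fails for real non-integer $\alpha$ (e.g.\ $\alpha=1/2$, $k=3$ gives $\beta=0$), but this does not matter, since Lemma \ref{L:manip} for $\alpha\notin\ZZ$ only needs $\alpha+\beta\notin\ZZ$, which holds because $\alpha+\beta=k/2-1$ is a half-integer.
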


\begin{proof}[Proof of Theorem \ref{thelim}] 
The proof follows from Lemmas \ref{Lconvolution},  \ref{Lsing}, and items \ref{i:choicebeta}, \ref{mini12} and \ref{mini13} of Note \ref{N:sing1} at the beginning of \S\ref{sec:elim-th}.
 \end{proof}
 
\begin{Note}[Comments on Theorem \ref{thelim}]
\label{Nelim}{\rm 

\begin{enumerate}

\item
   Theorem \ref{thelim} yields a  practical method to apply simple convolution and conformal maps to make the local behavior near a singularity purely analytic. Since analyticity and singularity are highly sensitive to being distinguished numerically, this therefore provides a numerical mechanism to refine both the location of the singularity and also to refine the convolution parameter $\beta$, in order to determine the power exponent $\alpha$ which characterizes the nature of the original singularity. This is particularly useful when  empirical analysis is the only option available. See examples in  \S \ref{S:empirical} and \S \ref{S:Applic}.
     
   \item   After eliminating a singularity (say at $\omega$) of $F$, if we uniformize the Riemann surface of the new function $F_1$, then $z=\psi(\omega)\in \DD$,  $F\circ\phi$ is analytic at $z$ and can be calculated convergently and with rigorous bounds. This means that the complete information about the singularity of $F$ (such as the functions $A,B$ if the singularity is of type \eqref{eq:res-sing}) follows.  Uniformization of the new surface may be impractical, in which case an appropriate Conformal-Taylor expansion (see \S\ref{CT}) would provide, with sub-optimal rate of convergence, the same information (or, non-rigorously, using Pad\'e approximants,  see \S\ref{S51}). 
   
   \item
In practical computations we noticed that the precision of the local information obtained from singularity elimination is usually significantly better than what is obtained numerically from an explicit uniformization map. See for example the Painlev\'e I computation described in  Note \ref{N:02}.
  
  \item
  There are important special cases in which all singularities are eliminated,  for example arrays of pure singularities of the type  $\sum_{k=0}^n (\omega-\omega_k)^{\alpha}$, which can be transformed by an appropriate $L_\beta$ into a sum of logs, which becomes rational after differentiation. 
   
   \item   Still for empirical analysis, for all three maps in (\ref{eq:r1r2}) analytic continuation past $1$ of $F\circ\phi$ leads to the second Riemann sheet of $F$. For example, the map $\phi_0$ takes the origin of the second Riemann sheet of $F$ to  $\omega=2$ on the first Riemann sheet of $\phi_0^*F$. Therefore, singularity elimination also provides access to higher Riemann sheets. This will be an important element of the tools developed in \S \ref{S:empirical} to refine approximate data about the Riemann surface.
   
   \end{enumerate}
  
 } \end{Note}

\begin{Note}[Important special cases  of singularity transformation and elimination]{\rm

 For example, we can manipulate and eliminate the log singularity at $\omega=1$ of the elliptic integral function $\mathbb K(\omega)$. See \S\ref{sec:sing-elim-ex}.

}\end{Note}

\subsection{The counterpart on series of the singularity elimination operator}
\label{sec:conv-series}

 Both steps of the analytic operations described above for singularity elimination have a simple and explicit counterpart as operations at the level of the series coefficients, taking Maclaurin polynomials to Maclaurin polynomials. This is important in applications, since series compositions with many coefficients is computer-algebra time-expensive. Recall first the explicit expression \eqref{eq:convopow} for the action of the convolution operator $L_\beta$ on series. Here we consider the second step, that of singularity elimination, and derive explicit formulas for the composition with the elimination maps $\phi_0$, $\phi_1$ and $\phi_2$ listed in equation (\ref{eq:r1r2}) of item \ref{mini12} of Note \ref{N:sing1}.
\begin{Lemma}\label{L:18}
  On the level of series, the operator $\phi_0^*$ of composition with the conformal map $\phi_0$ is given by

  \begin{enumerate}
  \item  $\phi_0^*(\sum_{k=1}^n a_k \omega^k)= \sum_{k=1}^n b_k \omega^k$ where
     \begin{equation}
      \label{eq:bkforR1a}
   b_0=a_0;\ \    b_k =\sum_{l=0}^{\lfloor (k+1)/2 \rfloor} U_{k,l}a_{l}
 \end{equation}
 where $U_{k,l}$ is the coefficient of $x^l$ in the $k$th Chebyshev polynomial of the second kind $U_k(x)$, explicitly,
  \begin{equation}
   \label{eq:coef-gammaform}
   U_{k,l}= \begin{cases} \displaystyle{
 (-1)^l 2^{2 l+1} \binom{\frac{k+1}{2}+l}{\frac{k-1}{2}-l},\ \ \text{$k$ odd}}
 \\ \displaystyle{-4)^l \binom{\frac{k}{2}+l}{\frac{k}{2}-l}\text{,\quad  $k$ even}}
 \end{cases}
\end{equation}

\item $\phi_1^*$ acts by $\phi_1^*(\sum_{k=1}^n a_k \omega^k)= \sum_{k=1}^n b_k \omega^k$ where
     \begin{equation}
      \label{eq:bkforR1b}
   b_0=a_0;\ \    b_k =4\sum_{l=1}^k \tilde{U}_{k,l}a_l
    \end{equation}
    where $\tilde{U}_{k,l}$ is the coefficient of $x^l$ in the $k$th Chebyshev polynomial $U_k(2x-1)$.
   
  \item $\phi_2^*$ acts by  $\phi_2^*(\sum_{k=1}^n a_k \omega^k)= \sum_{k=1}^n b_k \omega^k$, where the new series coefficients $b_k$ are related to the original series coefficients $a_k$ via
     \begin{equation}
      \label{eq:bkforR1c}
   b_0=a_0;\ \    b_k=2\sum_{l=0}^k U_{k,l}a_{2l+1};\quad (k\ \text{odd),  and }   b_k =2\sum_{l=1}^k U_{k,l}a_{2l};\quad (k\ \text{even})
 \end{equation}
  \end{enumerate}
 
\end{Lemma}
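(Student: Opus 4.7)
The plan is to directly expand each $\phi_i(z)^l$ as an explicit power series in $z$, substitute into $F(\phi_i(z))=\sum_l a_l\phi_i(z)^l$, read off the coefficient of $z^k$, and identify the resulting combinatorial expressions with Chebyshev coefficients $U_{k,l}$. All three maps are rational with a pure power of $z$ in the numerator, so each expansion reduces to elementary binomial manipulations and ultimately to the Chebyshev generating function $\sum_k U_k(x)z^k=1/(1-2xz+z^2)$.

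For $\phi_0(z)=z(2-z)$, I would apply the binomial theorem directly to obtain $\phi_0(z)^l=\sum_{j=0}^{l}\binom{l}{j}(-1)^j 2^{l-j}z^{l+j}$, and extract the coefficient of $z^k$ by setting $j=k-l$, valid for $\lceil k/2\rceil\le l\le k$. Comparing the resulting expression with the classical closed form $U_k(x)=\sum_{j=0}^{\lfloor k/2\rfloor}(-1)^j\binom{k-j}{j}(2x)^{k-2j}$, a reindexing $j\leftrightarrow l$ aligns the two sums and produces precisely the stated $U_{k,l}$ of \eqref{eq:coef-gammaform}.

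For $\phi_1$ and $\phi_2$, the generating-function approach makes the Chebyshev structure transparent. The key identities are
$$\sum_{l\ge 0}\phi_1(z)^l t^l=\frac{(1+z)^2}{(1+z)^2-4zt}=\frac{(1+z)^2}{1-2z(2t-1)+z^2}=(1+z)^2\sum_{k\ge 0} U_k(2t-1)\,z^k,$$
$$\sum_{l\ge 0}\phi_2(z)^l t^l=\frac{1+z^2}{(1+z^2)-2zt}=\frac{1+z^2}{1-2tz+z^2}=(1+z^2)\sum_{k\ge 0} U_k(t)\,z^k,$$
both obtained by completing the square in the denominator and applying the Chebyshev generating function. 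Expanding the prefactors $(1+z)^2$ and $(1+z^2)$ expresses the coefficient of $z^k$ in $\phi_i(z)^l$ as a short linear combination of Chebyshev coefficients evaluated at index $l$; summing against $a_l$ then yields the claimed formulas after matching indices, with the factor $4$ in part 2 traced to the factor $4$ in the numerator of $\phi_1$.

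The one genuine bookkeeping subtlety arises for $\phi_2$: since $\phi_2$ is odd in $z$, the series $\phi_2(z)^l$ has the same parity as $l$, so only $a_l$ with $l\equiv k\pmod 2$ contribute to $b_k$. This is what forces the formula to split into the two cases $k$ odd (sum against $a_{2l+1}$) and $k$ even (sum against $a_{2l}$), and accounts for the factor $2$ that persists after the parity-halving change of index. Beyond this, I foresee no analytical obstacle; the proof is end-to-end a controlled bookkeeping of binomial sums, and the explicit closed forms for $U_{k,l}$ in \eqref{eq:coef-gammaform} can be verified either combinatorially from the classical formula above or by induction on $k$ using the Chebyshev recurrence $U_{k+1}(x)=2xU_k(x)-U_{k-1}(x)$.
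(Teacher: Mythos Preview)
Your proposal is correct and follows essentially the same approach as the paper: the paper proves part~3 by composing the test function $f(\omega)=\tfrac{\omega}{2(1-x\omega)}$ with $\phi_2$ and recognizing the result $\omega/(1-2x\omega+\omega^2)$ as the Chebyshev generating function, which is exactly your computation $\sum_{l}\phi_2(z)^l t^l=(1+z^2)/(1-2tz+z^2)$ with $t$ playing the role of $x$. Your treatment is slightly more explicit about the prefactors $(1+z)^2$, $(1+z^2)$ and the parity split for $\phi_2$, and you handle $\phi_0$ by direct binomial expansion rather than the same generating-function device, but these are cosmetic differences.
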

\begin{proof}
  We prove only part 3., since all three proofs are very similar. Define $\displaystyle{f(\omega)=\frac{\omega}{2(1-x\omega)}}$. Then,
  \begin{equation}
    \label{eq:gfun}
    (\phi_2^*f)(\omega)=\frac{\omega}{1-2 x \omega +\omega^2}
  \end{equation}
  which is, up to multiplication by $\omega$, the known generating function of $U_k(x)$:
  $$\displaystyle \sum _{k=0}^{\infty }U_k(x)\omega^{k}={\frac {1}{1-2 x \omega+\omega^{2}}}$$
  and the rest follows easily by comparing coefficients. 
\end{proof}

\begin{Note}[Remarks on numerical accuracy]{\rm 

\begin{enumerate}
\item
Since the calculation of the $b_k$ from the $a_k$ involves summations, there can be cancellations, which could potentially become significant for large $k$.  The following lemma addresses the question of the accuracy of the $b_k$, or even how many coefficients can be meaningfully retained. This is relevant also for selecting which map results in the minimal loss of accuracy. This is an interesting question for which examples will be given in an accompanying paper
 \cite{apps-paper}, and for which rigorous estimates are under investigation.

\begin{Lemma}
  For fixed large $k$, $|U_{k, l}|$ reaches its maximum value $M_k$ at $l\sim \frac{1}{2 \sqrt{2}}k+\frac{1}{8} \left(2 \sqrt{2}-5\right)$
  $$M_k\sim \frac{2^{-1/4}}{\sqrt{\pi k}}\left(1+\sqrt{2}\right)^{k+1}\qquad , \quad k\to\infty$$
\end{Lemma}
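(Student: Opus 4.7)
The plan is to treat the cases $k$ even and $k$ odd in parallel (they are mirror images: the odd case reduces to the even one after factoring out $2^{2l+1}$ vs.\ $(-4)^l$ and shifting indices), so I focus on $k$ even. Set $m=k/2$ and $f(l)=|U_{k,l}|=4^l\binom{m+l}{m-l}$ for $0\le l\le m$. I will first locate the maximizer $l^*$ from the ratio test, then apply Stirling's formula to evaluate $f(l^*)$.

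\emph{Step 1: locate $l^*$.} Direct calculation gives
\begin{equation*}
\frac{f(l+1)}{f(l)}=\frac{4(m+l+1)(m-l)}{(2l+1)(2l+2)}.
\end{equation*}
This ratio is strictly decreasing in $l$ (easy to check by cross-multiplying), so $f$ is unimodal and its maximum lies at the unique integer $l^*$ nearest the real root of $4(m+l+1)(m-l)=(2l+1)(2l+2)$. Writing the equation as $4(m^2-l^2+m-l)=4l^2+6l+2$ and substituting the ansatz $l=m/\sqrt2+a$, I would match the $m^0$-terms to obtain $\sqrt2\,a=\tfrac12-\tfrac{5}{4\sqrt2}$, i.e.
\begin{equation*}
a=\frac{2\sqrt2-5}{8},\qquad l^*=\frac{m}{\sqrt2}+\frac{2\sqrt2-5}{8}+O(m^{-1})=\frac{k}{2\sqrt2}+\frac{2\sqrt2-5}{8}+O(k^{-1}),
\end{equation*}
which is the claimed location.

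\emph{Step 2: evaluate $f(l^*)$ via Stirling.} Write $t=1/\sqrt2$ and $l=tm+O(1)$. Stirling gives
\begin{equation*}
\log\binom{m+l}{m-l}= (m+l)\log(m+l)-(m-l)\log(m-l)-2l\log(2l)+\tfrac12\log\!\frac{m+l}{2\pi(m-l)(2l)}+O(m^{-1}).
\end{equation*}
The powers of $\log m$ cancel (since $(m+l)-(m-l)-2l=0$), and combining with $\log 4^l=2l\log 2$ the ``entropy'' part reduces after elementary manipulation to
\begin{equation*}
m\bigl[(1+t)\log(1+t)-(1-t)\log(1-t)+t\log 2\bigr].
\end{equation*}
Using $1\pm t=(\sqrt2\pm1)/\sqrt2$ and $(\sqrt2-1)(\sqrt2+1)=1$ (so $\log(\sqrt2-1)=-\log(\sqrt2+1)$), the bracket collapses to $2\log(1+\sqrt2)$. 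Hence the exponential factor is $(1+\sqrt2)^{2m}=(1+\sqrt2)^{k}$.

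\emph{Step 3: compute the prefactor.} With $l=tm$ one gets
\begin{equation*}
\sqrt{\frac{m+l}{2\pi(m-l)\,(2l)}}=\sqrt{\frac{1+t}{2\pi\sqrt2\,(1-t)\,m}}=\sqrt{\frac{(\sqrt2+1)^2}{2\sqrt2\,\pi m}}=\frac{1+\sqrt2}{2^{3/4}\sqrt{\pi m}},
\end{equation*}
using $(1+t)/(1-t)=(\sqrt2+1)^2$. Substituting $m=k/2$ turns this into $(1+\sqrt2)/(2^{1/4}\sqrt{\pi k})$. Multiplying by the exponential factor from Step~2 and pulling one $(1+\sqrt2)$ into the exponent gives
\begin{equation*}
M_k\sim\frac{2^{-1/4}}{\sqrt{\pi k}}\,(1+\sqrt2)^{k+1},
\end{equation*}
as claimed. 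A final consistency check verifies that replacing $l^*$ by the nearest integer changes the value only by a factor $1+O(k^{-1})$, since the quadratic in $l$ sitting in the expansion of $\log f(l)$ around $l^*$ has curvature of order $1/m$.

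\emph{Main obstacle.} The hardest piece is tracking the $O(1)$ correction in $l^*$ and propagating it into the prefactor without accumulating algebraic errors; the exponential part is robust, but the numerical constants $2^{-1/4}$ and the index shift $k\mapsto k+1$ come from a delicate cancellation between the $\sqrt{(1+t)/((1-t)\sqrt2)}$ Stirling ratio and the explicit factor of $(1+\sqrt2)$ that emerges. The odd-$k$ case needs the same bookkeeping with $m=(k-1)/2$ and an extra $2^{2l+1}$ versus $(-4)^l$, but yields the same asymptotic by direct comparison.
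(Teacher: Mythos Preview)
Your proposal is correct and follows exactly the approach sketched in the paper: locate the maximizer by setting the ratio $|U_{k,l+1}/U_{k,l}|=1$ and solving asymptotically for $l$, then evaluate $|U_{k,l^*}|$ via Stirling's formula. The paper gives only this one-sentence outline, so your write-up is considerably more detailed, but the method is identical. One small remark: the $O(1)$ shift $a$ in $l^*$ does not actually feed into the prefactor at the order you retain, since at the stationary point of the ``entropy'' function the linear variation vanishes and the quadratic correction is $O(1/m)$; so the delicate cancellation you flag as the main obstacle is in fact automatic.
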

\begin{proof}
This result can be derived by asymptotically solving the equation $U_{k, l}/U_{k,{l-1}}=1$ for $l=l(k)$, and using Stirling's formula for large $k$ in $U_{k,l(k)}$.
\end{proof}
\z For example, using $\phi_0^*$, taking into account the position of the maximum, for large $k$ the coefficient $b_k$ involves cancellations of terms $a_m$, with $m\le k$ weighted by $ \left(1+\sqrt{2}\right)^{2 m}\approx 6^m$.

\item
In general, the accuracy needed can be calculated similarly, from the capacity $C$.

\end{enumerate}
}\end{Note}

\subsection{Singularity Elimination Example}
\label{sec:sing-elim-ex}

In this section we illustrate the general procedure of singularity elimination by using a suitable convolution operator $L_\beta$  in (\ref{eq:defLbeta}) to transform the logarithmic singularity of the elliptic integral function $\mathbb K(\omega)$ into a square root singularity, and then eliminating this singularity by composition with a suitable conformal map.  We choose this example because of its practical interest and also because we can compare the general expressions in \S\ref{sec:elim} with analytic results for the transformation of hypergeometric functions.
We define
\begin{eqnarray}
F(\omega)=\mathbb K(\omega)=\frac{\pi}{2} \,~_2 F_1\left(\frac{1}{2}, \frac{1}{2}, 1; \omega\right)
\label{eq:k}
\end{eqnarray} 
The expansions as $\omega\to 0^+$ and $\omega\to 1^-$ are:
\begin{eqnarray}
F(\omega)&\sim & \frac{1}{2} \sum_{k=0}^\infty \left(\frac{\Gamma\left(k+\frac{1}{2}\right)}{\Gamma(k+1)}\right)^2\, \omega^k \qquad, \quad \omega\to 0^+ 
\label{eq:F0}
\\
F(\omega) &\sim & A(\omega)\, \ln(1-\omega)+B(\omega) \qquad, \quad \omega\to 1^- 
\label{eq:F1}
\end{eqnarray}
where at the logarithmic singularity the regular functions $A(\omega)$ and $B(\omega)$ behave as
\begin{eqnarray}
A(\omega)&=& -\frac{1}{\pi} \, \mathbb K(1-\omega) =  -\frac{1}{\pi} \sum_{k=0}^\infty \left(\frac{\Gamma\left(k+\frac{1}{2}\right)}{\Gamma(k+1)}\right)^2\, (1-\omega)^k \qquad, \quad \omega\to 1^-
\label{eq:A}
\\
B(\omega)&=& -\sum_{k=0}^\infty \left(\frac{\Gamma\left(k+\frac{1}{2}\right)}{\Gamma(k+1)}\right)^2 \left(\psi\left(k+\frac{1}{2}\right)-\psi(k+1)\right) (1-\omega)^k \qquad, \quad \omega\to 1^-
\label{eq:B}
\end{eqnarray}
The convolution operator (\ref{eq:defLbeta}), with $\beta\notin \mathbb Z$,  acting on $F$ leads to
\begin{eqnarray}
(L_\beta F)(\omega) &=& \frac{\pi}{2(1+\beta)} \,\omega\, ~_2 F_1\left(\frac{1}{2}, \frac{1}{2}, 2+\beta; \omega\right)
\label{eq:LbetaF1}
\\
&=& 
 \frac{\Gamma(1+\beta)}{2} \sum_{k=0}^\infty \frac{\Gamma\left(k+\frac{1}{2}\right)^2}{\Gamma(k+1) \Gamma(2+k+\beta)}\, \omega^{k+1} \qquad, \quad \omega\to 0^+ 
 \label{eq:LbetaF2}
 \\
&\sim & \tilde{A}(\omega)(1-\omega)^{1+\beta}+\tilde{B}(\omega)\qquad, \quad \omega\to 1^-
\label{eq:LbetaF3}
\end{eqnarray}
where $\tilde{A}(\omega)$ and $\tilde{B}(\omega)$ are analytic at $\omega=1$.
Equation (\ref{eq:LbetaF2}) confirms the general expression (\ref{eq:convopow}) relating the original expansion coefficients of $F(\omega)$ with those of the convolved function $(L_\beta F)(\omega)$. To transform the original logarithmic singularity at $\omega=1$ to a square root behavior at $\omega=1$ we choose $\beta=-\frac{1}{2}$, to obtain
\begin{equation}
(L_{-\frac{1}{2}} F)(\omega) = \pi \,\omega\, ~_2 F_1\left(\frac{1}{2}, \frac{1}{2}, \frac{3}{2}; \omega\right) =\pi\, \sqrt{\omega} \, {\rm arcsin}(\sqrt{\omega}) 
\label{eq:LbetaF121}
\end{equation}
(Of course, in this special case composition with the series of $\sin^2$ results in factorial convergence of the composed series, far superior to the generic rate in the optimality theorem.)

  $L_\beta F$ is analytic at zero and singular at $\{1,\infty\}$, and at $\{0,1,\infty\}$ on higher Riemann sheets.  Its Maclaurin series is
\begin{equation}
\frac{\sqrt{\pi}}{4} \sum_{k=0}^\infty \frac{\Gamma\left(k+\frac{1}{2}\right)^2}{\Gamma(k+1) \Gamma\left(k+\frac{3}{2}\right)}\, \omega^{k+1} \qquad, \quad  \omega \in \DD
 \label{eq:LbetaF122}
\end{equation}
and its singularity structure near $\omega=1$ is
\begin{equation}
\tilde{A}(\omega)(1-\omega)^{1/2}+\tilde{B}(\omega)
\label{eq:LbetaF123}
\end{equation}
where
  \begin{eqnarray}
  \tilde{A}(\omega)&=& -\pi\,\sqrt{\omega}\, \frac{ {\rm arcsin}(\sqrt{1-\omega})}{\sqrt{1-\omega)}} = 
  -\frac{\pi}{2} \sqrt{\omega} \sum_{k=0}^\infty (-1)^k \frac{\Gamma\left(k+\frac{1}{2}\right)^2}{\Gamma(k+1) \Gamma\left(k+\frac{3}{2}\right)} (1-\omega)^k
  \label{eq:Atilde}
  \\
  \tilde{B}(\omega) &=& \frac{\pi^2}{2} \sqrt{\omega}
  \label{eq:Btilde}
  \end{eqnarray}
  We see from (\ref{eq:Atilde}) that the expansion coefficients of $\tilde{A}(\omega)$, the function in (\ref{eq:LbetaF123}) multiplying the square root behavior, match the general expression in (\ref{eq:bk}).
 In the practical situation where we only have (a finite number of) the original expansion coefficients, we simply transform the expansion coefficients according to the convolution results in \S\ref{sec:conv-series}.

 The final step of the singularity elimination is to make a composition map that transforms the square root behavior in (\ref{eq:LbetaF123}) into analytic behavior,  using the map $\phi_2$ in  (\ref{eq:r1r2}). The Riemann surface of the new function, after composition with $z\mapsto iz$ is uniformized by \eqref{eq:inome0} which brings the original singular point $-1$ inside the unit disk, where (assuming we did not know $\tilde A,\tilde B$) these could be calculated with extremely high accuracy.
 
 A similar comparison can be made for a general hypergeometric function
 \begin{eqnarray}
F(\omega)&=& ~_2F_1(a, b, c; \omega) 
\label{eq:2F11}
\\
&\sim& A(\omega) (1-\omega)^{c-a-b}+B(\omega) \qquad, \quad \omega\to 1^-
\label{eq:2F12}
\end{eqnarray}
for which the convolved function becomes a generalized hypergeometric function with a different singularity exponent at $\omega=1$:
 \begin{eqnarray}
(L_\beta F)(\omega)&=& \frac{1}{(1+\beta)\Gamma(1+\beta)} \, \omega\,  ~_3F_2({1, a, b}, {c, 2+\beta}; \omega) 
\label{eq:3F21}
\\
&\sim& \tilde{A}(\omega) (1-\omega)^{c-a-b+1+\beta}+\tilde{B}(\omega) \qquad, \quad \omega\to 1^-
\label{eq:3F22}
\end{eqnarray}
For a given original singularity exponent, $c-a-b$, a suitable choice of $\beta$ transforms the singularity into a square root singularity, which can then be eliminated by composition with one of the conformal maps in (\ref{eq:r1r2}).
\begin{Note}\label{NPartial}
	 {\rm When the nature of singularities is known a priori and a particular singularity on some Riemann sheet needs to be understood better, say by singularity elimination, partial as opposed to complete uniformization might be necessary. Indeed, upon complete uniformization, $\partial \DD$ is a natural boundary and singularity elimination there may help very little, whereas the singularities after partial uniformization are always isolated.}
\end{Note}

\section{New Approximate Methods for Empirically Probing  the Riemann Surface}
\label{S:empirical}

In this Section we address the question of how to extrapolate  and analytically continue the function $F$ when the only input is a finite number $n$ of terms of its Maclaurin series about some point, so that  the underlying Riemann surface $\Omega$ has to be determined also.
Evidently nothing rigorous can be said if $n$ is fixed, and we focus on methods that are efficient and convergent as $n\to \infty$. We present methods to determine approximate information about the singularity structure of $F$, and methods to refine and corroborate this approximate information.
We also adapt known results to provide precise rates of the convergence of these methods.

\subsection{Overview of Mathematical Results on Pad\'e Approximants}
 \label{S51}
Diagonal (and near-diagonal) Pad\'e approximation is one of the most frequently used methods for empirical reconstruction \cite{baker,bender}
\begin{Definition}
The $[m/n]$ Pad\'e approximant of $F$  at $\omega=0$ is the unique rational function $ A_m/B_n$, with $A_m$ a polynomial
of degree at most $m$, and $B_n$ a polynomial of degree at most $n$, for which we have
\begin{equation}
  \label{eq:defPade}
  F(\omega)-\frac{A_m(\omega)}{B_n(\omega)}=\mathcal{O}\left( \omega^{m+n+1}\right),\ \ \ \omega\to 0
\end{equation}
If we normalize $B_n(0)=1$, then $A_m$ and $B_n$ are also unique. Since it can be calculated directly from the Maclaurin series of $F$, we also say that $[m/n]$ is the Pad\'e approximant of $F$.

A sequence of Pad\'e approximants $\{[n/n]\}_{n\in\NN}$ is called diagonal, and $\{[m_j/ n_j]\}_{j\in\NN}$ is near-diagonal   if $n_j\to \infty$ and $m_j/n_j\to 1$ as $j\to\infty$. 
 \end{Definition}
 In spite of their simplicity (they are rational functions with the same Maclaurin series as $F$, inasmuch as their degree permits) Pad\'e approximants are, in most applications, uncannily accurate and able to detect poles and branch points in the whole complex domain (in principle).

However, except for special types of functions such as Riesz-Markov ones (see \cite{Wall1,Damanik-Simon} and references therein, and Note \ref{N:29}), they do not generally converge pointwise, but only in a weaker sense, in the sense of capacity theory. For this reason Pad\'e approximants can only be used as an exploratory tool.  Nevertheless, we explain below how these exploratory findings can be backed up rigorously, in the limit $n\to \infty$.

We briefly describe some important results (both negative and positive) concerning the convergence of Pad\'e approximants,
which seem to be little known to the applied community, outside the specialized literature. 

An intrinsic limitation is immediately clear: as any sequence of rational approximations, they can only converge in some domain of single-valuedness of their associated function.

 In fact, even for single-valued functions, uniform convergence of {\em some} diagonal Pad\'e subsequence to general meromorphic functions, the Baker-Gammel-Wills conjecture \cite{b-g-w},  was settled in the negative in a remarkable paper of Lubinsky in 2003 \cite{Lubinsky}. The phenomenon that prevents pointwise convergence are the so-called spurious  poles, or Froissart doublets, appearing at points unrelated to the properties of the associated function. In practice however, most often spurious poles appear infrequently and their exploratory value is largely unaffected.

Diagonal (and near-diagonal) Pad\'e approximations do converge in a weaker sense, namely in capacity, and in this sense they ``choose'' a maximal domain of single-valuedness where they converge, maximizing also the rate of convergence near $\omega=0$. This choice however also comes with a drawback:  points of interest of $F$ may be hidden in their boundary of convergence.  This is actually a common occurrence in applications.
We also propose new practical methods to overcome some of these limitations of Pad\'e approximants, see \S\ref{CT} and \S\ref{sec:hidden}.

\subsubsection{Convergence of Pad\'e approximants}
 \label{S52}
  Convergence of near-diagonal Pad\'e approximants to functions with branch points is  a very interesting and difficult question, only elucidated in 1997 in the fundamental paper of Stahl \cite{Stahl}. It is interesting to note that convergence in capacity is established at this time only for functions analytic on  $\Omega\left(\CC\setminus E\right)$, for sets $E$ of zero logarithmic capacity or in domains in $\CC$ bounded by piecewise analytic arcs under a stringent symmetry condition \cite{Stahl}. We focus on the first type of functions,  which are the ones of interest here.

The general theory of Pad\'e approximants summarized below is  based on \cite{Stahl}. For further developments and refinements, see \cite{Yattselev,Fink}.

The theory is best described by doing an inversion and {\bf placing the point of expansion at infinity} rather than at $\omega=0$.  
It is shown in \cite{Stahl} that there exists a domain $\mathcal{D}\subset \hat\CC$, unique up to a capacity zero set,  whose boundary  has minimal logarithmic capacity,  which contains $\infty$ and where $F$ is analytic and single valued. This $\mathcal{D}$ is the domain where near-diagonal Pad\'e approximants converge in capacity to $F$.
The rate of convergence is controlled by the Green's function $g_{\mathcal{D}}$ (see, e.g., \cite{szego,Ransford,Saff}) relative to infinity as follows. Define
$G_{\mathcal{D}}=e^{-g_{\mathcal{D}}}$. We have $G_{\mathcal D}\in [0,1)$ and $G_{\mathcal D}>0$ on $ \mathcal{D}\setminus\{\infty\}$ (in the case of interest, where cap($\mathcal{D})>0$). Then, summarizing from Theorem 1 by Stahl \cite{Stahl}, 

\begin{enumerate}
\item For any $\epsilon>0$ and any compact set $V\subset \mathcal{D}\setminus\{\infty\}$ we have
 \begin{equation}
  \label{eq:limcap1}
  \lim_{j\to\infty}\text{cap}\{\omega\in V|(F-[m_j/n_j])(\omega)>(G_{\mathcal{D}}(\omega)+\epsilon)^{m_j+n_j}\}=0
\end{equation}
\item If $F$ has branch points, which occurs iff $G_{\mathcal{D}}\ne 0$, then for any compact set $V\subset \mathcal{D}\setminus\{\infty\}$ and any $0<\epsilon\le \inf_{\omega\in V}G_{\mathcal{D}}(\omega)$ we have
 \begin{equation}
  \label{eq:limcap2}
  \lim_{j\to\infty}\text{cap}\{\omega\in V|(F-[m_j/n_j])(\omega)<(G_{\mathcal{D}}(\omega)-\epsilon)^{m_j+n_j}\}=0
\end{equation}
  
\end{enumerate}

\begin{Note}\label{N:29}{\rm 
  \begin{enumerate}

  \item In the rather generic case when $\mathcal{D}$ is simply connected, then $G_{\mathcal{D}}=|\psi_{\infty}|$, where $\psi_\infty$ is a conformal map from $\mathcal{D}$ to  $\DD$,   with $\psi_\infty(\infty)=0$. Comparing with Theorem \ref{T1}, we note that {\bf if} the maximal domain of analyticity of $F$ happens to be this $\mathcal{D}$,  the {\em geometric part of the}  rate of convergence  in capacity of Pad\'e would be optimal.

\item When $\mathcal{D}$ is simply connected, in view of 1. above and \eqref{eq:limcap1}, we see that Pad\'e effectively ``creates its own conformal map'' of a single-valuedness domain for $F$,  denoted  by $\mathcal{D}$, that can be recovered, in the limit $n\to \infty$, from the harmonic function $|G_{\mathcal{D}}|$, obtained  by taking the $n$-th root of the convergence rate \eqref{eq:limcap1}.
  
  \item  In very special cases, such as Riesz-Markov functions, under some further restrictions, the convergence of Pad\'e approximants is uniform on compact sets (cf. \cite{Damanik-Simon} and references therein). A Riesz-Markov is a function that can be written in the form
 $$F(\omega)=\int_a^b\frac{d\mu(x)}{x-\omega}$$
 where $\mu$ is a positive measure. Riesz-Markov functions occur frequently in certain applications, but general functions cannot be brought to this form. For example, a common situation in applications, discussed in more detail in \S\ref{sec:2cut} below, is the situation of two complex conjugate singularities in $\CC$. This is not a Riesz-Markov function, and Pad\'e produces curved arcs of poles (see Figure \ref{fig:two-cc-cut}) which do not relate to the properties of the function. 
 
\item As mentioned, for more general functions, Pad\'e approximants may place spurious poles (``Froissart doublets'') on sets of zero capacity, ``random'' pairs of a pole and a nearby zero,  unrelated to the function they approximate.

\item The numerators and denominators of Pad\'e approximants are orthogonal polynomials, in a generalized sense, along arcs in the complex domain, but therefore without a bona-fide Hilbert space structure. According to \cite{Stahl}, this is the ultimate source of capacity-only convergence, and of the appearance of Froissart doublets.

\item If $F$  has only isolated singularities on $\Omega\left(\hat\CC\setminus S\right)$ where $S$ is finite, $\partial \mathcal{D}$ is a set of piecewise analytic arcs joining branch points of $F$, and some accessory points (similar to those of the Schwarz-Christoffel formula) associated with junctions  of these analytic arcs. For an example see Figure \ref{fig:two-cc-cut}.
Pad\'e represents actual poles of $F$ by poles, and branch points by lines (either straight or curved arcs). The pole density converges in capacity to the equilibrium measure along the arcs, and this density is infinite at the actual branch points, resulting in accumulation of poles there.
  \end{enumerate}
 
}\end{Note}

\subsubsection{Potential Theory and Physical Interpretation of Pad\'e Approximants}
  There is a remarkable and intuitively useful physical interpretation, which can be derived from \cite{Stahl,Saff}, of the domain $\mathcal{D}$ and of the placement of poles of Pad\'e.  We summarize the main aspects relevant for our analysis here:
  
   \begin{enumerate}

  \item Take any set $\mathcal{D}'$ of single-valuedness of $F$ and let $E'=\partial \mathcal{D}'$ be its boundary. Thinking of $E'$ as an electrical conductor we place a unit charge on $E'$, and normalize the electrostatic potential $V(x,y)=V(\omega), \omega=x+iy$ (always constant along a conductor) by $V(E')=0$. Then the electrostatic capacitance of $E'$ is cap$(E')= 1/V(\infty)$.

  \item The domain boundary $E=\partial \mathcal{D}$ of the domain of convergence of Pad\'e is obtained by deforming the shape (keeping the singularity locations fixed)  of the conductor $E'$ (defined in item 1 of this Note) until it has minimal capacity.
 
  \item  The equilibrium measure $\mu$  on $E$ is the equilibrium density of charges on $E$ in the setting above. As $j\to\infty$ the poles of the near diagonal Pad\'e approximants place themselves (except for a set of zero capacity) close to $E$, and Dirac masses placed at these poles converge in measure to $\mu$  \cite{Stahl}. 
  
  \item For $\omega\in \mathcal{D}$, we have $e^{-g_{\mathcal{D}}(\omega)}=|G_{\mathcal{D}}(\omega)|=e^{-V(\omega)}$. 
  
  \item
A brief summary of an associated numerical construction is outlined in the  Appendix \ref{sec:app}.

  \end{enumerate}

\subsection{New Approximate Methods for Detecting Hidden Singularities}
\label{sec:hidden}

It is not uncommon that discrete singularities of a function lie on the capacitor of Pad\'e approximants where they diverge, and  therefore cannot be seen in this way. All resurgent functions coming from differential equations have their singularities along half-lines starting from the origin, and symmetry reasons generally make those rays part of the capacitor. This is the case, for instance, for the {\em tronqu\'ee} Painlev\'e transcendents, P$_{I}$--P$_{V}$. In general the leading singularity is a branch-point, and, to ensure single-valuedness, Pad\'e ``creates'' a cut,  part of the capacitor.
Two ways to detect such  ``hidden'' singularities are described here.

\begin{enumerate}
	
	\item {\bf Probe Singularity Method:}
	
	The simplest  method is to place an artificial probe singularity near the arc. By the potential theory interpretation of Pad\'e we know that this additional singularity will distort the minimal capacitor, but it cannot move the genuine singularities. This simple procedure can be implemented as follows: assume that $J$ is an analytic arc of the  Pad\'e approximants of the  function $F\in \mathcal G$. Define  a new function $F_1(\omega)=\omega\mapsto F(\omega)+(\omega-\omega_0)^\alpha$, where $\alpha\notin\ZZ$ (a negative power  is typically more effective), such that the extra ``probe'' singularity at $\omega=\omega_0$ is placed in the proximity of the arc $J$. Clearly, the Pad\'e approximants of $F_1$ determine the values of $F$ as well, simply by subtracting out $(\omega-\omega_0)^\alpha$. The capacitor of $F_1$ is necessarily different from that of $F$, since the probe singularity $\omega_0$ must be part of the new capacitor. Generically, the arc $J$ moves when $\omega_0$ is chosen near any point of $J$ which is a point of analyticity of $F$. Evidently too, points in $J$ which are branched singularities of $F$ cannot move. 
	
	\item {\bf Conformal Mapping Method:}
	
	The second method consists of applying a form of CT. Any nontrivial conformal map of domains in $\CC$  changes the capacitor and  typically distorts all the arcs of the Pad\'e capacitor exposing previously hidden singularities, and possibly hiding ones that were visible before, and exposing domains that lie on the second Riemann sheet relative to the cut $\partial \mathcal{D}$.

	In the large $n$ limit, CT provides a {\bf rigorous} way to check the information inferred from a Pad\'e analysis. Indeed, conformally mapping a mistaken domain (or parts of a Riemann surface), results in singularities in $\DD$, seen in an $n$-th root test of CT (cf. \S\ref{Sempir}).

	The singularities of $F$ that lie on the boundary $\partial \mathcal{D}$ are mapped onto the unit circle  $\TT$, the boundary of $\DD$, and can therefore be resolved using a discrete Fourier transform of the properly normalized Maclaurin coefficients.
	
\end{enumerate}
\subsection{Example of Approximate Extrapolation: Painlev\'e equations PI-PV}
\label{S:ExampleP1}

The {\em tronqu\'ee} Painlev\'e transcendents are resurgent functions.
The Painlev\'e equations, $P_I$-$P_V$, have a common and simple Borel singularity structure, which can be arranged as integer-spaced singularities along the real line (excluding the origin). Even the Conformal-Pad\'e method, based on the simple two-cut conformal map  in (\ref{eq:twocutmap}), leads to a remarkably accurate extrapolation of the formal solution generated at infinity, throughout the complex plane: see \cite{Costin:2019xql} for a detailed analysis of the {\it tritronqu\'ee} solution of $P_I$. 

But with uniformizing maps significantly better extrapolation and analytic continuation can be achieved. 
Here we show that it is not necessary to use the {\it exact} uniformizing map from \S\ref{sec:P1-unif} in order to achieve highly accurate analytic continuation. One can instead use a crude approximation to the uniformization, based simply on the two (symmetric) leading Borel singularities, ignoring all the further integer-repeated Borel singularities. Recall Figure \ref{fig:P1singularities} for Painlev\'e I.
For example, even the simple step of replacing the two-cut conformal map  (\ref{eq:twocutmap}) with the two-puncture uniformizing map of $\Omega\left(\hat\CC\setminus \{-1, 1, \infty\}\right)$ in (\ref{eq:2cut}) leads to a dramatic improvement. This is illustrated in Figure \ref{fig:p1poles}.
\begin{figure}[htb]
	\centering
	\includegraphics[scale=1]{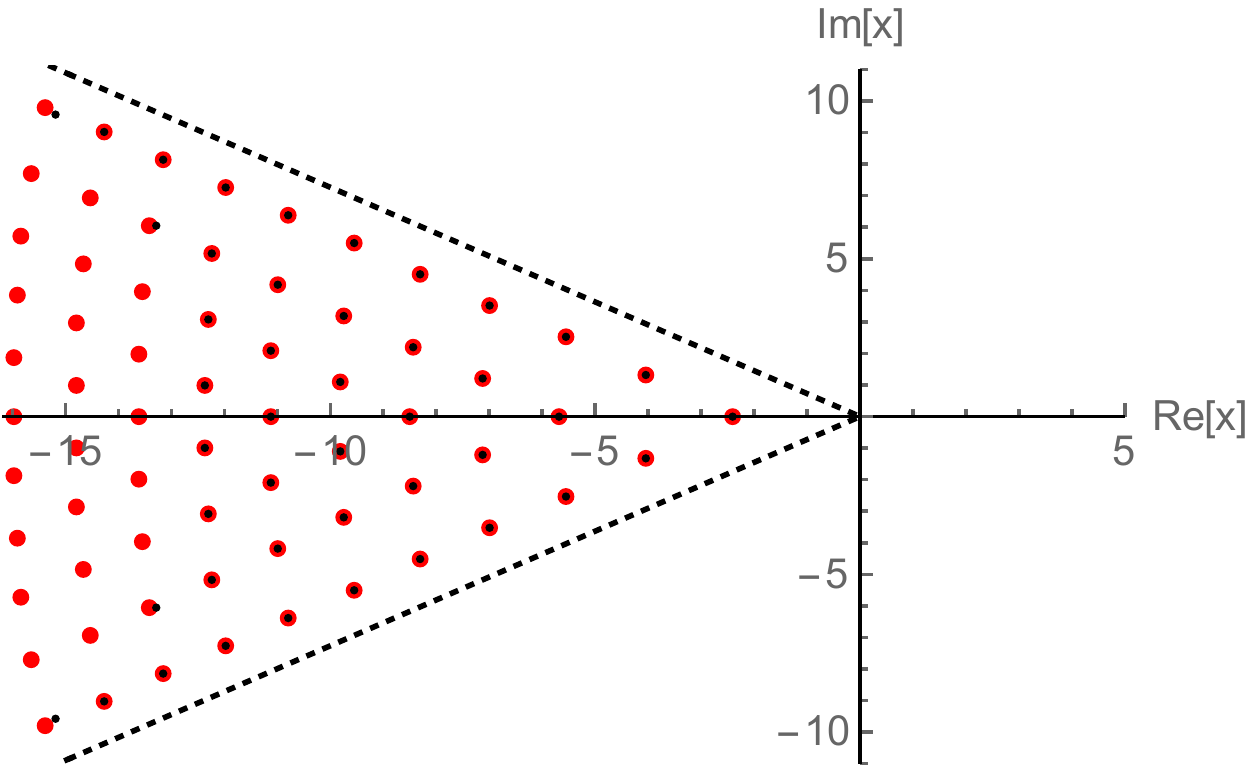}
	\caption{The poles of the {\it tritronqu\'ee} solution of the Painlev\'e 1 equation, which lie only in the wedge $\frac{4\pi}{5}\leq {\rm arg}(x)\leq \frac{6\pi}{5}$ of the complex plane, obtained by extrapolation of the asymptotic expansion about $x\to +\infty$, using 200 input coefficients. The smaller black dots show the results using the Conformal-Pad\'e method in the Borel plane, as described in  \cite{Costin:2019xql}, while the larger red dots are obtained by replacing the   conformal map (\ref{eq:twocutmap}) with the uniformizing map  (\ref{eq:2cut}).
		With exactly the same input data, the uniformizing map leads to a significantly better extrapolation. There is comparable accuracy throughout the domain of analyticity, $|{\rm arg}(x)|\leq \frac{4\pi}{5}$.}
	\label{fig:p1poles}
\end{figure}
Recall that the solution of the $P_I$ equation, $y''(x)=6\, y^2-x$, is meromorphic throughout the complex plane, and the special {\it tritronqu\'ee}  solution has poles only in the wedge, $\frac{4\pi}{5}\leq {\rm arg}(x)\leq \frac{6\pi}{5}$ \cite{dubrovin,Dubrovin}.
A nontrivial test of the precision of an extrapolation
is to reconstruct the $P_I$  {\it tritronqu\'ee} solution throughout its domain of analyticity, and also in its pole sector $\frac{4\pi}{5}\leq {\rm arg}(x)\leq \frac{6\pi}{5}$, using only input from its asymptotic expansion about the opposite direction,  $x\to +\infty$. 
Figure \ref{fig:p1poles} shows as black dots
$44$ $P_1$ {\it tritronqu\'ee} poles found using the Conformal-Pad\'e approach of  \cite{Costin:2019xql}, starting with $200$ terms of the asymptotic expansion generated at $x\to+\infty$, while the red dots show the first $66$ $P_1$  {\it tritronqu\'ee} poles found simply by adapting the analysis  of \cite{Costin:2019xql} to use the uniformizing map (\ref{eq:2cut}) instead of the conformal map in (\ref{eq:twocutmap}), and with exactly the same input data. The gain in precision in the pole sector, and also throughout the domain of analyticity, is quite dramatic. These numerical poles,  even the first few ones, fit very precisely the asymptotic Boutroux structure \cite{kitaev,Costin:2019xql}.

In addition, the  reconstruction using the uniformization explained above yields high-precision  fine structure of the pole region.
In the vicinity of a movable pole, say $x=x_j$,  any $P_I$ solution $y(x)$ has a Laurent expansion of the following form
\begin{eqnarray}
	y(x)= \frac{1}{(x-x_j)^2}+\frac{x_j}{10}(x-x_j)^2+\frac{1}{6}(x-x_j)^3+ h_j (x-x_j)^4
	+\frac{x_j^2}{300}(x-x_j)^6 +\cdots
	\label{eq:general}
\end{eqnarray}
where the constants $x_j$ and $h_j$, important in applications, are not determined by the equation. The coefficients of $(x-x_j)^{k},k>4$ are  expressed as polynomials in the two parameters $x_j$ and $h_j$. The {\it tritronqu\'ee} is completely determined by the constants $x_j$ and $h_j$  at any pole; the one closest pole to the origin, $j=1$ is particularly important.
From the  procedure explained above, we obtain the following high-precision values: 
\begin{eqnarray}
	x_1&=& -2.38416876956881663929914585244876719041040881473785051267725...
	\label{eq:x1}
	\\
	h_1&=& 0.0621357392261776408964901416400624601977407713738296636635333...
	\label{eq:h1}
\end{eqnarray}
These are significantly higher precision than existing values \cite{novokshenov2}.
Furthermore,
it is straightforward to  obtain even higher precision, if desired.  Similar methods apply to the other 
Painlev\'e {\it tronqu\'ee} solutions, providing new methods to obtain high-precision computations for the Painlev\'e project \cite{PainleveProject}, and also to compute high-precision spectral properties of certain Schr\"odinger operators \cite{masoero,novokshenov2}.

\section{Comparison to existing techniques used in the physics literature}\label{S:Applic}

\subsection{Conformal-Taylor (CT) and Conformal-Pad\'e (CP)} \label{CT}

We compare the accuracy of two methods that have been used in the physics literature. While they have been used rather infrequently and without convergence analysis, they can be quite useful to reach points outside of $\DD$. In the Conformal-Taylor (CT) method (as defined above in \S \ref{sec:t1discussion}) a domain $\mathcal{D}\subset \CC$ of analyticity of $F$ is chosen, and then one proceeds as in Theorem \ref{T1}, with $\mathcal{D}$ in guise of $\Omega$. The Conformal-Pad\'e method (CP)
\cite{ZinnJustin:2002ru,caliceti,Costin:2019xql,Costin:2020hwg,Costin:2021bay} consists of  a further step of applying Pad\'e approximants to  CT, which typically results in a significant increase in accuracy, at the price of having convergence in capacity only. Surprisingly, the CP  method appears to have been used even less frequently than CT. 
\begin{Note}{\rm 
  \begin{enumerate}
  
  \item If the domain $\mathcal{D}$ happens to be the maximal domain of analyticity, then of course, Theorem \ref{T1} shows that CT is optimal.
 
  \item  The error control of each of CT and CP  approximation is obtained from the map $\psi$ as in Theorem \ref{T1}.
   
  \item Also as a consequence of Theorem \ref{T1},  CT can be improved by choosing a domain $\mathcal{D}$ with cap($\mathcal{D}$) as small as possible within the class of domains having explicit conformal maps $\psi$.

  \item To expand the class of explicit maps, we note that one only needs a map $\phi:\DD\to \mathcal{D}$ which is {\bf surjective} (at the price of a slower rate of reconstruction of $F$). 
  
   \end{enumerate}
  
}\end{Note}

\subsection{Improvement of Uniformization over Pad\'e and Conformal-Pad\'e (CP)}

The generic improvement of analytic continuations based on uniformization maps, compared with other common methods such as Pad\'e or  Conformal-Pad\'e (CP) [as illustrated in the previous section], can often be traced to some elementary properties of these maps,  especially near the singularities. Here we illustrate this with some examples.

\subsubsection{One Cut Complex Plane}
\label{sec:1cut}

For $\Omega=\CC \setminus [1, \infty)$, the conformal map is in (\ref{eq:onecutmap}), and the uniformizing map of $\hat\CC\setminus \left\{1, \infty\right\}$ is $\omega=1-e^{-z}$, with inverse $z=-\log(1-\omega)$. The uniformizing map pushes the singularity at $\omega=1$ to $z=\infty$. A Pad\'e approximant of the truncated  composed map $F\circ \phi$, with either the conformal map or the uniformizing map, each mapped back to $\Omega$, produces dramatically improved extrapolations throughout $\Omega$. 
 For example, for the one-branch-cut function $F(\omega)=(1-\omega)^{-1/5}$, beginning with just 10 terms of a Maclaurin expansion at $\omega=0$, Figure \ref{fig:one-cut} shows the ratio of the extrapolated to the exact function, as the singularity is approached: $\omega\to 1^-$. The uniformization map is vastly superior.  This is due to the typical exponential distortion of distance near the singularity. Ordinary Pad\'e, without composition with either map, is not at all competitive. 
\begin{figure}[h!]
  \centering\includegraphics[scale=.75]{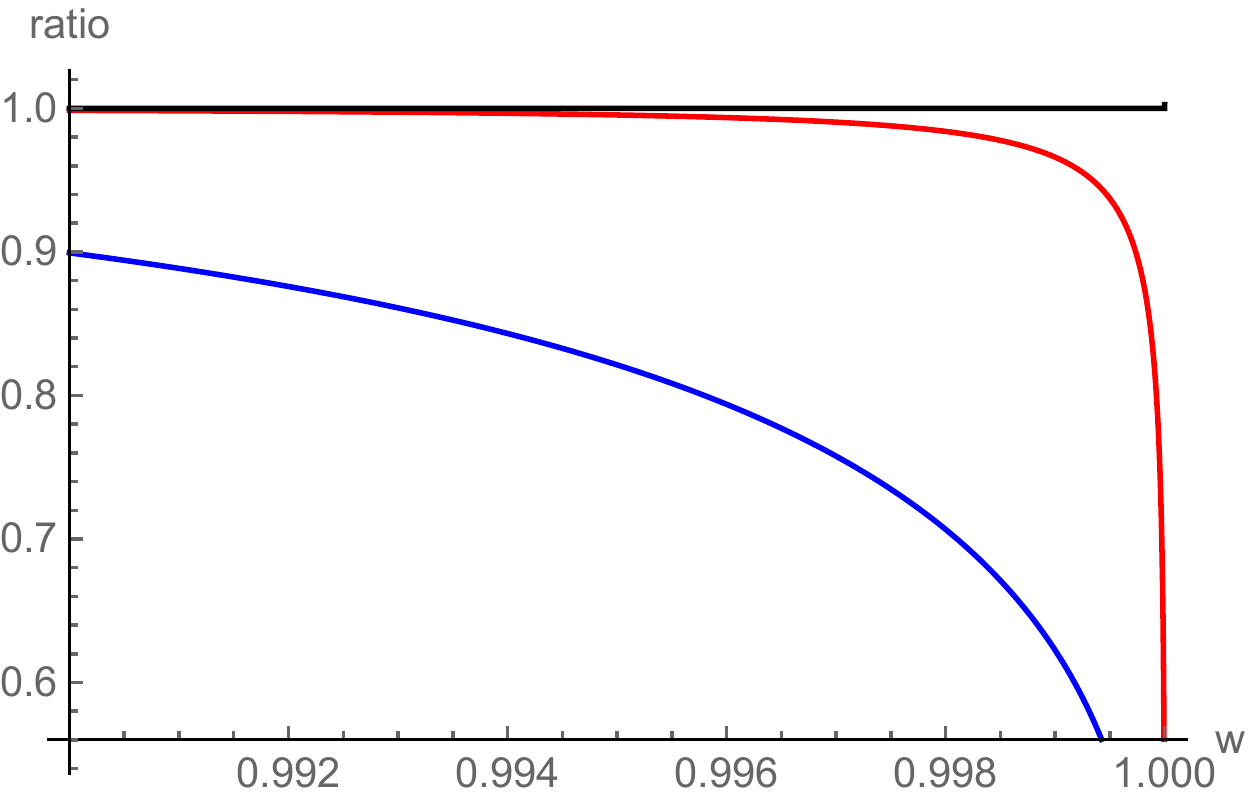}
   \caption{Ratio of the approximate to the exact one-branch-cut function $F(\omega)=(1-\omega)^{-1/5}$, for a Pad\'e approximation [blue], a Pad\'e-Conformal approximation [red], and a Pad\'e-Uniformized approximation [black]. Note the dramatically superior behavior of the uniformized approximation as the singularity is approached: the ratio is essentially 1, except for a tiny blip very close to $\omega=1$. These approximations are each generated starting with just 10 input coefficients of the series expansion of $F(\omega)$ at $\omega=0$.}
  \label{fig:one-cut}
\end{figure}

\subsubsection{Two Cut Complex Plane}
\label{sec:2cut}

 Another common case in applications is the two-cut complex plane, $\Omega=\CC \setminus (-\infty, -1]\cup  [1, \infty)$. The conformal and uniformizing maps are given in (\ref{eq:twocutmap}) and (\ref{eq:2cut}), respectively,  in  \S \ref{sec:app}. For example, applying this to the two-branch-cut function $F(\omega)=(1-\omega^2)^{-1/5}$, with branch points at $\omega=\pm 1$, produces similar improvements as in \S\ref{sec:1cut}. In this case the conformal map sends the cut $\omega$ plane to the interior of the unit disk in the $z$ plane, while the uniformizing map sends the cut $\omega$ plane to the interior of the symmetric geodesic quadrilateral with boundaries given by orthogonal circles intersecting the unit disk at $z=\pm 1, \pm i$. See the left plot in Figure \ref{Fig:nonconfplot64}.

Even when these are not the exact conformal or uniformizing maps, such as in nonlinear problems where the singularities at $\omega=\pm 1$ are only the leading ones, generally repeated at all non-zero integers, the use of these two-cut maps leads to dramatic improvements, especially in the vicinity of these leading singularities
 \cite{Costin:2019xql,Costin:2020hwg,Costin:2021bay} as in \S\ref{S:ExampleP1}. This example is particularly relevant in applications, as there are many examples where the (Borel) singularities appear in integer multiples along oppositely directed straight lines: for example, the Borel plane of:
the {\em tronqu\'ee} Painlev\'e I-V solutions, the Euler-Heisenberg effective action, rigorously known, \cite{Duke,dunne-eh}, and renormalon singularities in quantum field theory (highly numerically corroborated)
 \cite{beneke}. 
  \begin{figure}[h!]
  \centering\includegraphics[scale=.75]{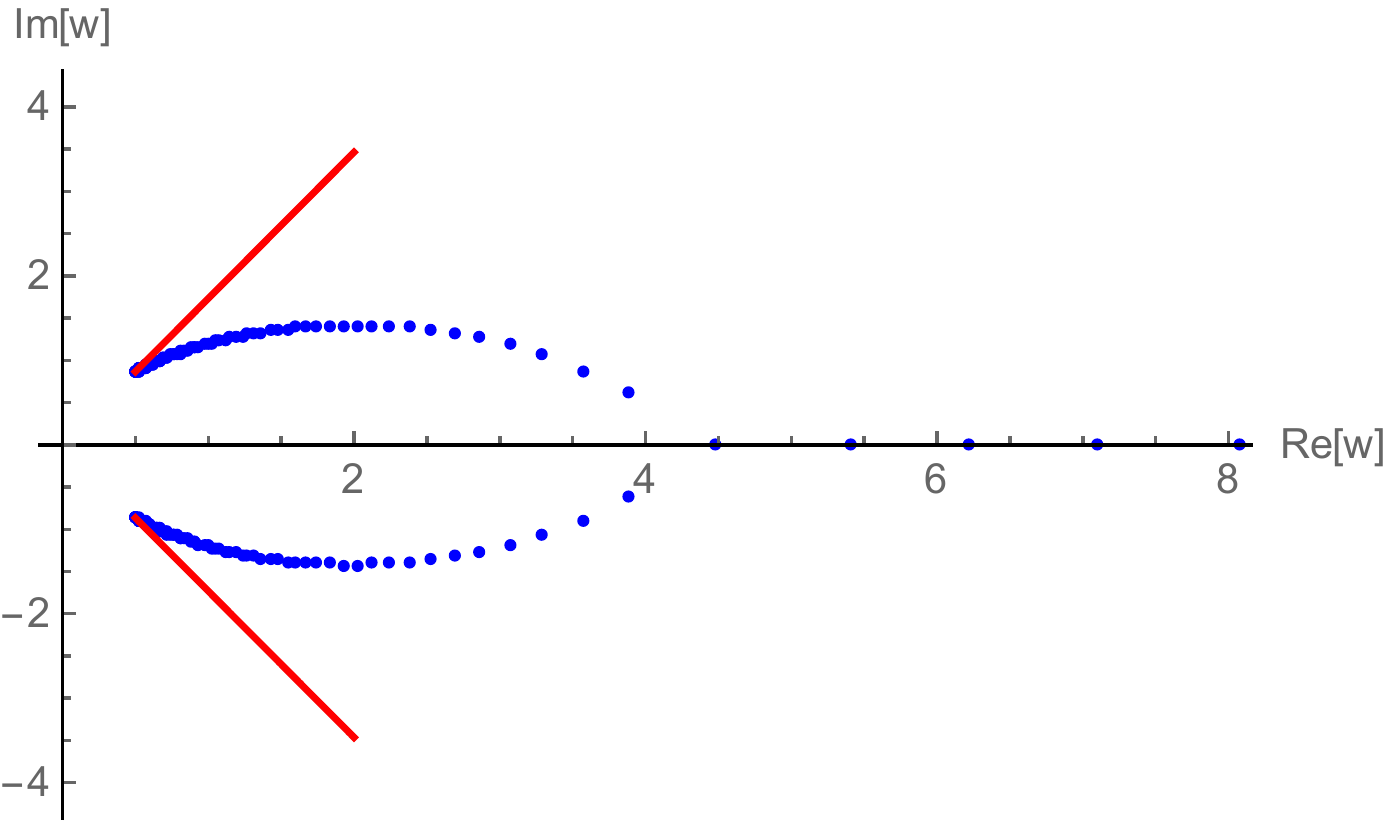}
   \caption{
   Arcs of Pad\'e poles (blue points)  for a pair of complex conjugate singularities, here at $\omega=e^{\pm i \pi/3}$, for the function $F(\omega)=(1-2\omega \cos(\pi/3)+\omega^2)^{-1/5}$.  Pad\'e generates  ``unphysical'' arcs of poles (blue) along its minimal capacitor, including an eventually dense set on part of $\RR^+$, while the ``natural'' radial cuts (red lines) are associated with the  conformal map (\ref{eq:cc1}).}  
  \label{fig:two-cc-cut}
\end{figure}
Another important configuration for applications consists of two complex conjugate singularities at $\omega=e^{\pm i \theta}$.
This type of configuration occurs in physical applications involving a parameter that breaks the collinear symmetry \cite{stephanov,bertrand,rossi,serone,heller}. In this case, without a conformal or uniformizing map, Pad\'e produces curved arcs of poles (see Figure \ref{fig:two-cc-cut}) as well as artificial poles along the positive real axis.  These poles are not related to the analytic properties of the function which is being approximated. Problems due to these artificial poles can be mitigated by using a conformal or uniformizing map. The  conformal map  for this configuration is
  \begin{figure}[h!]
  \centering\includegraphics[scale=.75]{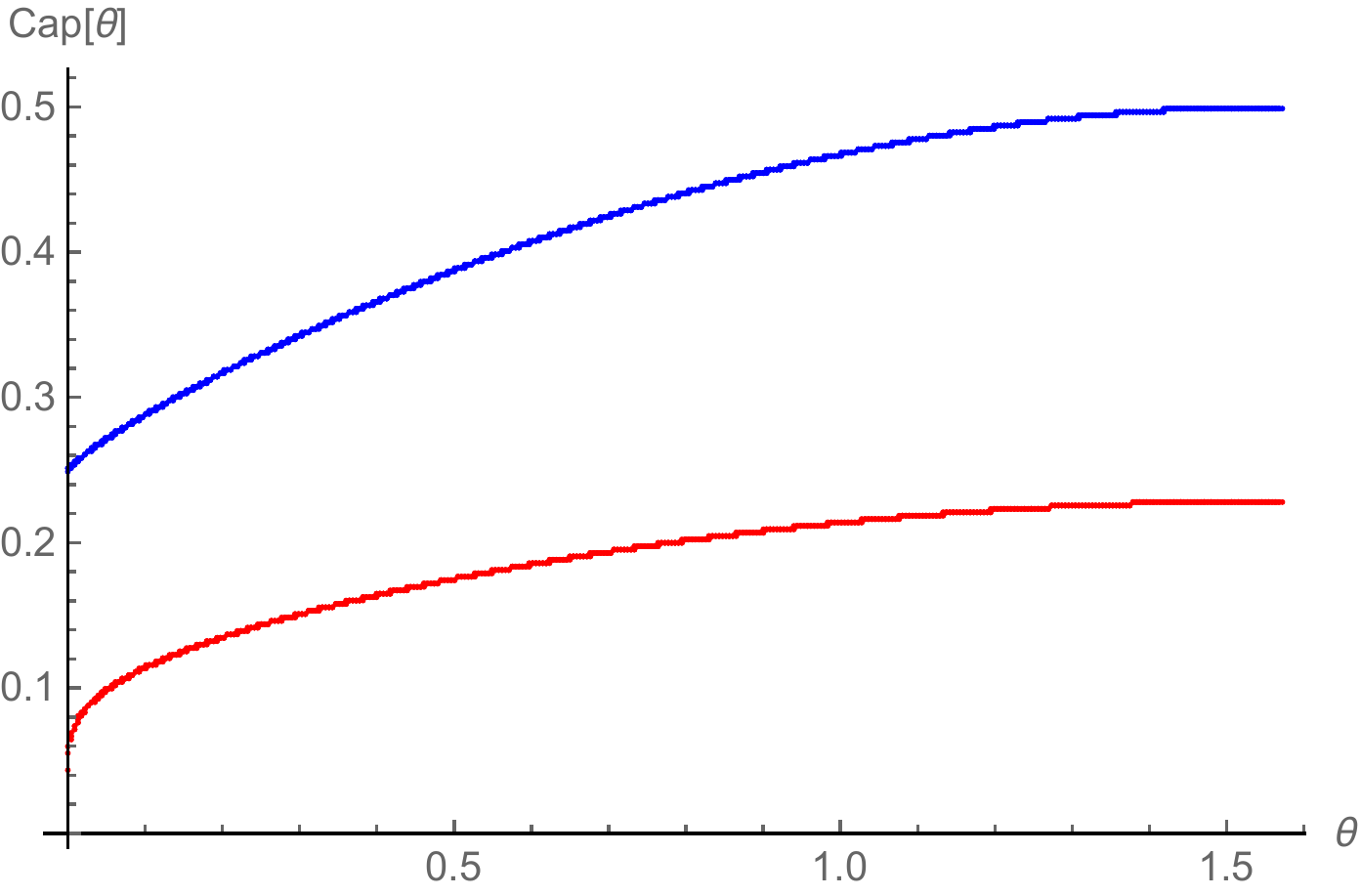}
   \caption{Plot of the acceleration modulus $a_c$ associated with (\ref{eq:conf-cap})  (blue curve) and with  (\ref{eq:unif-cap}) (red curve), $a_u$,  as a function of $\theta\in [0, \pi/2]$, for the configuration with two complex conjugate singularities at $e^{\pm i \theta}$. 
}
  \label{fig:cap}
\end{figure}
\begin{eqnarray}
\omega= c(\theta)\frac{z}{(1+z)^2}\, \left(\frac{1+z}{1-z}\right)^{2\theta/\pi}
\qquad, \qquad c(\theta)=4\left(\frac{\theta}{\pi}\right)^{\theta/\pi}\left(1-\frac{\theta}{\pi}\right)^{1-\theta/\pi} 
\label{eq:cc1}
\end{eqnarray}
and the uniformizing map is given by \eqref{eq:UnifSymmetric}.
The acceleration moduli for the two cases are
\begin{eqnarray}
\text{conformal :}\qquad a_c(\theta)&=&\frac{1}{4} \left(\frac{\theta}{\pi}\right)^{-\frac{\theta }{\pi }} \left(1-\frac{\theta }{\pi }\right)^{\frac{\theta }{\pi }-1}
\label{eq:conf-cap}\\
\text{uniformizing :}\qquad  a_u(\theta)&=&\frac{i\,\frac{\pi}{2}\, \sin\theta}{\left(\mathbb K\left(\frac{1}{2}+\frac{i}{2}\cot \theta\right)\right)^2 + \left(\mathbb K\left(\frac{1}{2}-\frac{i}{2}\cot \theta\right)\right)^2}
\label{eq:unif-cap}
\end{eqnarray}
As shown in Figure \ref{fig:cap},  the uniformizing map has a better acceleration modulus for all $\theta$, and also has an explicit inversion. The conformal map is a simpler elementary function, but has no explicit inversion except for a few special cases of  rational $\theta/\pi$. A simple Pad\'e approximation, which produces the minimal capacitor (for which there exist implicit transcendental expressions for the acceleration modulus -- the minimal capacity
 \cite{kuzmina,grassmann}),  is inferior to both the conformal and uniformizing maps.

\section{Appendix: some relevant conformal maps}
\label{sec:app}

In this Appendix we record some conformal maps relevant for frequently encountered cases of resurgent functions, and which can also serve as guides in  more complicated arrangements of singularities. General conformal maps can in principle be derived from Schwarz-Christoffel,  but this procedure is rather tedious, and in cases of symmetry the resulting maps can be quite elementary. For a comprehensive list of many known conformal maps, see \cite{Kober}.

A simple but important case is the one-cut domain $\Omega=\mathbb{C}\setminus [1,\infty)$, for which
\begin{equation}
  \label{eq:onecutmap}
z=  \psi(\omega)=\frac{1-\sqrt{1-\omega}}{1+\sqrt{1-\omega}}
  \qquad \text{with inverse }\qquad  
 \omega= \phi(z)=\frac{4z}{(1+z)^2}
\end{equation}
The optimal rate of convergence obtained from  the Maclaurin series of a generic function $F$ whose maximal analyticity domain is this $\Omega$, and is continuous up to $\partial \DD$ is, see \eqref{eq:CauchyEst},
$$|F(\omega_0)- \hat{R}_n(\omega_0) |\sim \frac{|\omega_0|^n}{2\left |\sqrt{1-\omega_0}\right | \left |1+\sqrt{1-\omega_0}\right |^{2n-1}} \|F\|_{\infty};\qquad  \omega_0\in\Omega$$
The constant $C=1/4$ in (\ref{oeq}), the capacity of $1/\partial \Omega$, is simply $\psi'(0)$.

For the domain with two opposite cuts,  $\Omega=\mathbb{C}\setminus (-\infty,-1]\cup[1,\infty)$, the maps are
\begin{equation}
  \label{eq:twocutmap}
z=  \psi(\omega)=\sqrt{\frac{1-\sqrt{1-\omega^2}}{1+\sqrt{1-\omega^2}}}
\qquad \text{with inverse }\qquad  
 \omega=  \phi(z)=\frac{2z}{1+z^2}
\end{equation}
with $\psi(\omega)>0$ for $\omega\in (0,1)$. The capacity of $1/\partial \Omega$ is now $C=1/2=\psi'(0)$. This construction generalizes straightforwardly to $m$ symmetric cuts emanating from the vertices of a regular polygon. See  Appendix \ref{sec:app} and  \cite{Hempel}. This example also generalizes to $\Omega=\mathbb{C}\setminus (-\infty,-a]\cup[b,\infty)$: see  (\ref{eq:cmap-ab}) in Appendix \ref{sec:app}.
\begin{Note}{\rm 
There is a more general principle behind \eqref{eq:twocutmap} worth mentioning:
      \begin{Lemma}
      \label{L:n-fold}
         Let $\Phi$
by a conformal map of $\DD$ to some domain $\mathcal D\subset \CC$, and let $c=\Phi'(0)>0$.  Then, 
$\Phi_n(z):=\Phi(z^n)^{1/n}$  maps $\DD$ conformally to $n$ symmetric
copies of $\mathcal D^{1/n}$, i.e., to
$$\bigcup_{0\le j\le n-1}e^{2\pi i j/n}\mathcal D^{1/n}$$
      \end{Lemma}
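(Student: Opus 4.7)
The plan relies on the implicit normalization $\Phi(0)=0$ (otherwise the image cannot have the claimed rotational symmetry about $0$, and the hypothesis $c=\Phi'(0)>0$ is really only needed to pin down the branch of the $n$-th root). I would proceed in three steps: first extract a genuinely single-valued holomorphic definition of $\Phi_n$, then establish injectivity (hence conformality), and finally identify the image.

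The first step is the one with a real pitfall, and handling it cleanly makes everything else easy. Since $\Phi$ is conformal with $\Phi(0)=0$, the function $g(w):=\Phi(w)/w$ extends holomorphically to $\DD$ and is non-vanishing there, with $g(0)=c>0$. Simple connectivity of $\DD$ then produces a single-valued holomorphic branch $g^{1/n}$, which I normalize by $g(0)^{1/n}=c^{1/n}\in\RR_{>0}$. I would then \emph{define}
\[
\Phi_n(z) := z\,g(z^n)^{1/n},\qquad z\in\DD,
\]
which is manifestly single-valued and holomorphic on $\DD$. Since $\Phi_n(z)^n=z^n g(z^n)=\Phi(z^n)$, this is the intended interpretation of $\Phi(z^n)^{1/n}$ with the natural branch.

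Injectivity is then purely algebraic. If $\Phi_n(z_1)=\Phi_n(z_2)$, raising to the $n$-th power and using injectivity of $\Phi$ gives $z_1^n=z_2^n$, so $z_2=\zeta z_1$ with $\zeta^n=1$. Substituting back into the defining formula yields $\Phi_n(z_2)=\zeta\Phi_n(z_1)$, forcing either $\zeta=1$ (so $z_1=z_2$) or $\Phi_n(z_1)=0$; the latter, via non-vanishing of $g$, happens only at the origin. Conformality is automatic (injective holomorphic maps have non-vanishing derivative), or can be verified directly from the identity $(w g(w))'=\Phi'(w)\neq 0$.

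For the image, the inclusion $\Phi_n(\DD)\subseteq\{v:v^n\in\mathcal{D}\}$ is immediate. For the reverse, given $v$ with $v^n\in\mathcal{D}$, write $v^n=\Phi(u)$ for the unique $u\in\DD$, take any $n$-th root $z_0\in\DD$ of $u$, and observe $\Phi_n(z_0)^n=v^n$, so $\Phi_n(z_0)=\zeta v$ for some $n$-th root of unity $\zeta$; replacing $z_0$ by $\zeta^{-1}z_0$ (still an $n$-th root of $u$ in $\DD$) produces a preimage of $v$. Hence $\Phi_n(\DD)=\{v:v^n\in\mathcal{D}\}$, a set manifestly invariant under multiplication by $e^{2\pi i/n}$, which decomposes as the claimed union $\bigcup_{j=0}^{n-1}e^{2\pi i j/n}\mathcal{D}^{1/n}$ upon taking $\mathcal{D}^{1/n}$ to be any fundamental sector (for instance, the image under $\Phi_n$ of $\{z\in\DD:\arg z\in[0,2\pi/n)\}$). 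The only place a careless argument would go wrong is Step~1; once the factorization $\Phi(w)=w\,g(w)$ is exploited, the remaining verifications are short algebra.
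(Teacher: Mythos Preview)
Your proof is correct and follows essentially the same approach as the paper: both extract the simple zero at the origin via the factorization $\Phi(w)=w\,g(w)$ with $g$ non-vanishing, construct a single-valued $n$-th root (you via the existence of holomorphic roots on simply connected domains, the paper via the monodromy theorem --- the same idea), and then deduce injectivity from $z_1^n=z_2^n$ together with the factored form. Your surjectivity argument is more explicit than the paper's one-line ``the rest follows from injectivity,'' but the strategy is identical.
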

     \begin{proof}
In a neighborhood of zero, $\Phi_n$ is uniquely defined by
$\Phi_n(z)=|c^{1/n}|z H(z^n)$, where $H$ is analytic at zero and
$H(0)=1$. Since $\Phi\ne 0$ on $\DD\setminus\{0\}$, by the monodromy
theorem, $\Phi_n$ extends analytically to $\DD$. Since $\Phi$ is
injective on $\DD$,   $\Phi_n(z)=\Phi_n(v)$ implies $z^n=v^n$. Now,
$\Phi_n(z)=\Phi_n(v)$, written as $|c^{1/n}|z H(z^n)=|c^{1/n}|v H(v^n)$
implies $z=v$, and thus $\Phi_n$ is injective. Since $\Phi$ is onto
$\mathcal{D}$, the rest follows from injectivity.
\end{proof}
With proper adaptations, this construction extends to uniformization maps of Riemann surfaces.}
\end{Note}

{\bf \refstepcounter{minisection} \label{mini3} \arabic{minisection}.} For functions analytic on $\Omega\left(\hat\CC\setminus \{-1,1, \infty\}\right)$ the maps are (compare with \cite{Bateman}, p. 99)
\begin{eqnarray}
z= \psi(\omega)=\frac{\mathbb K\left(\frac{1+\omega}{2}\right)-\mathbb K\left(\frac{1-\omega}{2}\right)}{\mathbb K\left(\frac{1-\omega}{2}\right)+\mathbb K\left(\frac{1+\omega}{2}\right)} 
\qquad \text{with inverse }\qquad  
\omega =  \phi(z)=-1+2 \lambda \left(i\, \frac{1-z}{1+z}\right)
\label{eq:2cut}
\end{eqnarray}
Here $\lambda=\theta_2^4/\theta_3^4$ is the elliptic modular function, $\theta_2,\theta_3$ are Jacobi theta functions, and $\mathbb K(m)=(\pi/2) \,_2F_1(\tfrac12,\tfrac12;1;m)$ is the complete elliptic integral of the first kind of modulus $m=k^2$  \cite{Bateman}. The capacity is $C=\psi'(0)=\pi^{-2}\Gamma \left(\frac{3}{4}\right)^4\approx 0.2285$, more than a factor of two better than the capacity of the conformal map in Equation (\ref{eq:twocutmap}) of Example 2 above. Furthermore, $\hat{R}_n(\omega_0)$ in \eqref{eq:CauchyEst} converges on the whole universal covering of $\hat\CC\setminus \{-1,1, \infty\}$; that is, on all the Riemann sheets of the underlying function. See Figure \ref{Fig:nonconfplot64}.

Note that the improvement in accuracy is particularly dramatic near singular points. Indeed, the leading order asymptotic behavior of $\psi$ near $\omega=1$ is $\psi(\omega)\sim 1+2\pi/\ln(1-\omega)$.

\begin{figure}[h!]
  \centering\includegraphics[scale=0.28]{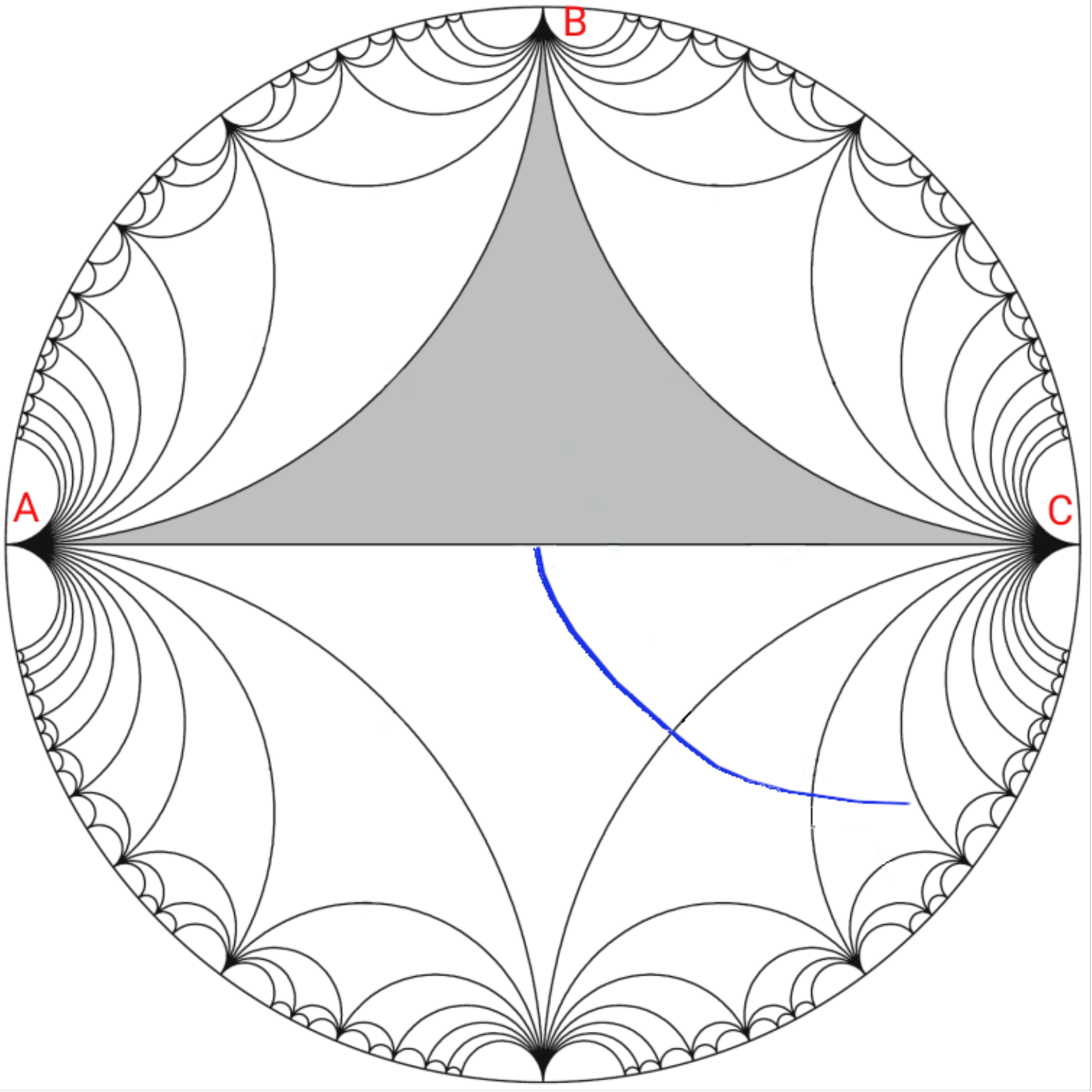} \includegraphics[scale=0.55]{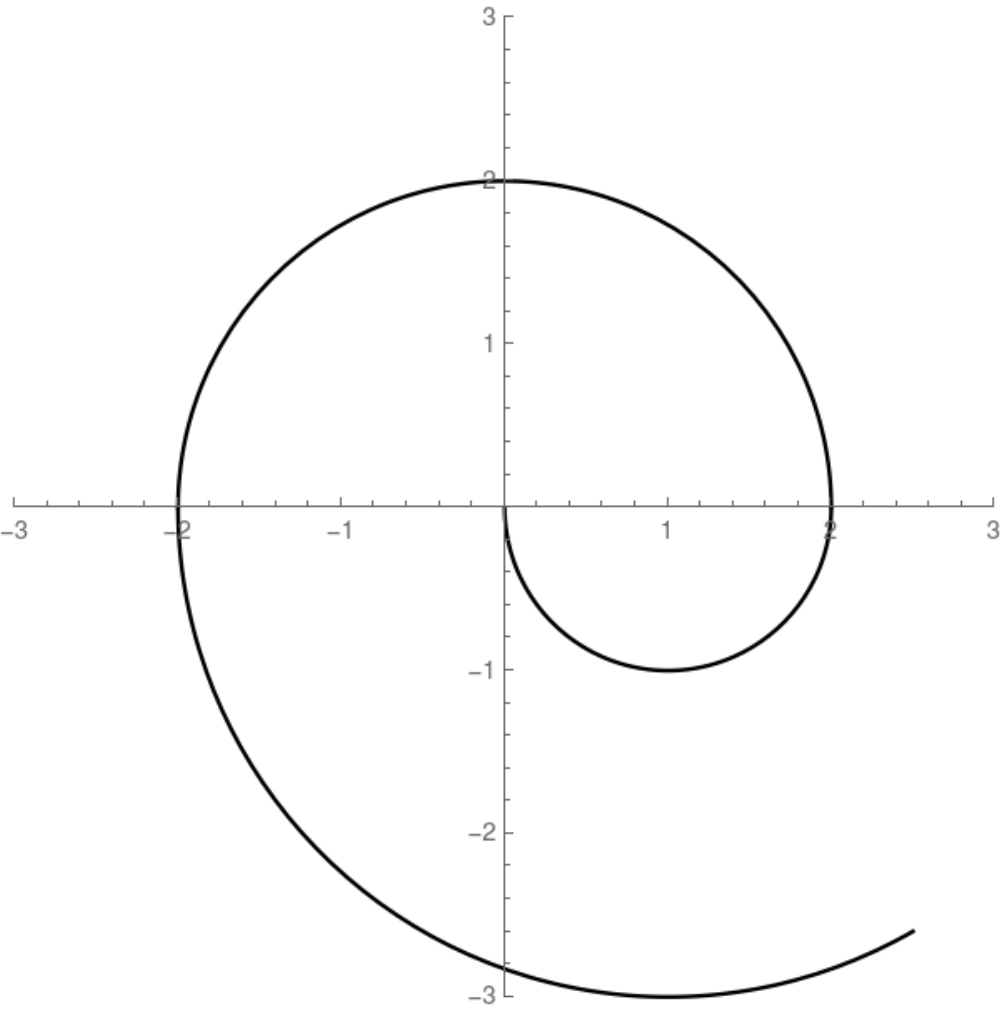}
   \caption{
    Uniformization of $\Omega\left(\hat\CC\setminus \{-1,1, \infty\}\right)$ by the map $\psi$ in (\ref{eq:2cut}).  The blue curve in the left-hand figure corresponds to the spiral path on $\Omega\left(\hat\CC\setminus \{-1,1, \infty\}\right)$, which crosses to two higher sheets.}
\label{Fig:nonconfplot64}
\end{figure}

The function $\psi$ in \eqref{eq:2cut} maps the points $A,B,C$  to $\{-1,\infty,1\}$. The gray geodesic triangle in the Poincar\'e disk is conformally mapped by $\phi$ onto the upper half plane, and successive Schwarz reflections across their circular sides continue $\phi$ to the whole disk, with image onto $\Omega$ (see \cite{Ablowitz}, p. 379).
The union of all reflected triangles is the unit disk $\DD$. The image through $\phi$ of a  curve in $\DD$ crossing a reflection of $(A,B), (B,C), (C,A)$ crosses the real line between $(-\infty,-1), (1,\infty), (-1,1)$ resp. The collection, modulo homotopies, of images  through $\phi$ of all the curves in $\DD$ (each of them traceable using this geometric description) represents $\Omega\left(\hat \CC\setminus \{-1, 1, \infty\}\right)$.

In the left figure of Figure \ref{Fig:nonconfplot64}, the blue path inside the disk is mapped to the spiral path in the middle figure on $\Omega\left(\hat \CC\setminus \{-1, 1, \infty\}\right)$. The improvement of the rate of convergence is determined by the conformal distance to the boundary of the unit disk, and is $\sim 0.228^n$ near zero, and the rate is about $0.83^n$ at the other end of the spiral.

{\bf \refstepcounter{minisection} \label{mini4.0} \arabic{minisection}. } The uniformization map for $\hat\CC\setminus \{e^{-i\theta},e^{i\theta},\infty\}$, another important case in applications,  is given by  (we consider $0\leq \theta\leq \frac{\pi}{2}$, and extend by symmetry)
\begin{eqnarray}\label{eq:UnifSymmetric}
z=\psi(\omega)=\frac{Z(\omega; \theta) -Z(0; \theta)}{1-\left(Z(0; \theta)\right)^*\, Z(\omega; \theta)}
\label{eq:cc2}
\end{eqnarray}
where $Z(\omega; \theta)$ is defined as
\begin{eqnarray}\label{eq:UnifSymmetricZ}
Z(\omega; \theta)\equiv \frac{\mathbb K\left(\frac{1}{2}+\frac{i}{2}\left(\frac{\omega}{\sin\theta}-\cot \theta\right)\right)
-\mathbb K\left(\frac{1}{2}-\frac{i}{2}\left(\frac{\omega}{\sin\theta}-\cot \theta\right)\right)}{\mathbb K\left(\frac{1}{2}+\frac{i}{2}\left(\frac{\omega}{\sin\theta}-\cot \theta\right)\right)
+\mathbb K\left(\frac{1}{2}-\frac{i}{2}\left(\frac{\omega}{\sin\theta}-\cot \theta\right)\right)}
\label{eq:cc2Z}
\end{eqnarray}
The inverse map is given in terms of the modular $\lambda$ function by
\begin{eqnarray}
\omega= \phi(z)= e^{i\, \theta} -2i\,\sin(\theta)\,  \lambda\left(i\left(
\frac{\mathbb K\left(\frac{1}{2}+\frac{i}{2} \cot\theta\right)-\mathbb K\left(\frac{1}{2}-\frac{i}{2} \cot\theta\right) z}
{\mathbb K\left(\frac{1}{2}-\frac{i}{2} \cot\theta\right)+\mathbb K\left(\frac{1}{2}+\frac{i}{2} \cot\theta\right) z}\right)\right)
\end{eqnarray}
These maps are obtained from \eqref{eq:2cut} by a suitable M\"obius transformation and disk automorphism.

{\bf \refstepcounter{minisection} \label{mini4} \arabic{minisection}. } It is straightforward to generalize the two previous examples to $\Omega\left(\hat\CC\setminus \{\omega_1, \omega_2, \infty\}\right)$, where $\omega_1, \omega_2 \in \CC$. The uniformization maps are again expressed in terms of the elliptic function $\mathbb K$ and the elliptic modular function $\lambda$. The important case in the previous example, with two complex conjugate points, $\omega_1=e^{i\theta}=1/\omega_2$, which occurs in many applications, is discussed further in \S \ref{sec:2cut}. 

{\bf  \refstepcounter{minisection} \label{mini5} \arabic{minisection}. }  For uniformization of other Riemann surfaces based on that of $\hat\CC\setminus \{-1,1, \infty\}$, possessing a nontrivial fundamental group, see for example \cite{Bateman}, \S 2.7.2. See also \cite{Hempel} for a collection of explicit uniformization maps of $\Omega\left(\hat\CC\setminus S\right)$, where $S$ is a finite set of points in $\hat\CC$, including for example $S=\{-1,0,1, \infty\}$, $S=\{-3,-1,0,1,3, \infty \}$ and $S=\{0,1, e^{\pi i/3}, \infty\}$, and  when $S$ consists of the $n$-th roots of unity. Uniformizing maps for the four-punctured torus are analyzed in \cite{keen}.
Algebraic functions have compact Riemann surfaces, and some explicit uniformizing maps can be found in Schwarz's table in \cite{Bateman}. 

{\bf  \refstepcounter{minisection} \label{mini6} \arabic{minisection}. }  In certain special cases the uniformizing map produces a meromorphic or rational function, in which case a subsequent Pad\'e approximation becomes exact. For example, the  function $F(\omega)= \sqrt{1-\omega}$ has a compact Riemann surface $\Omega$ (as all algebraic functions do). 
 The uniformizing map  $\omega=\phi(z)=4z/(1+z)^2$ makes $F\circ\phi$ meromorphic, $(F\circ\phi)(z)=(1-z)/(1+z)$,  hence analytic on the Riemann sphere, and Pad\'e $[n,n]$ is exact for $n>0$. A more sophisticated example is the Riemann surface $\Omega$ of functions with three square root branch points, which is uniformized by $(z^2-1)^2/(z^2+1)^2$, \cite{Bateman}, and the functions become rational; the uniformization theorem brings $\Omega$ to $\hat{\CC}$. This is another case where Pad\'e becomes exact.

The maps $\phi_1$ and $\phi_2$ in \eqref{eq:r1r2} take $\DD$ to $\CC\setminus [1,\infty)$ and 
$\CC\setminus (-\infty,-1]\cup [1,\infty)$, respectively. The latter case generalizes to two more general cuts on the real line, $\CC\setminus (-\infty,-a]\cup [b,\infty)$ with $a, b\in\RR^+$, for which
\begin{equation}
\omega=  \phi_{ab}(z)= \frac{4 a b\, z}{a (1+z)^2+b (1-z)^2} \quad \leftrightarrow \quad z=\frac{1-\sqrt{\frac{a (b-\omega)}{b (a+\omega)}}}{1+\sqrt{\frac{a (b-\omega)}{b (a+\omega)}}}
  \label{eq:cmap-ab}
\end{equation}
For a symmetric set of $n$ singularities on the unit circle, $\CC\setminus  \bigcup_{j=0}^{n-1} e^{2 \pi i j/n}[1,\infty)$, the conformal map producing symmetric radial cuts is
\begin{equation}
  \omega=\phi_n(z)=  \frac{2^{2/n} z}{(1+z^n)^{2/n}}
\label{eq:cmap-n}
\end{equation} 
used in \S\ref{sec:composition}. For two complex conjugate radial cuts, $\CC\setminus e^{ i\theta}[1,\infty)\cup e^{ -i\theta}[1,\infty)$, the conformal map is as in (\ref{eq:cc1}), and this extends to a symmetric set of $n$ such paired cuts as:
\begin{eqnarray}
z=\phi_{n, \theta}(z)= c_n(\theta)\frac{z}{(1+z^n)^{2/n} }\, \left(\frac{1+z^n}{1-z^n}\right)^{2\theta/\pi}
\quad, \quad c_n(\theta)=2^{2/n}\left(\frac{n\, \theta}{\pi}\right)^{\theta/\pi}\left(1-\frac{n\, \theta}{\pi}\right)^{1/n-\theta/\pi} 
\end{eqnarray}

For a general finite set of branch points, Pad\'e produces a conformal map in the infinite order limit, and this map corresponds to the minimal capacitor. Recall the discussion in \S\ref{S:empirical}. The analytic description of this minimal capacitor conformal map is as follows
 \cite{grassmann}.
Let $S=\{\omega_1,...,\omega_n\}$ be branch points, and $\mathcal C$ the minimal capacitor, with the point of analyticity placed at infinity. The conformal map $\phi$ that takes $\CC\setminus \DD$ to $\CC\setminus \mathcal{C}$, with $+\infty\mapsto+\infty$, has the Taylor expansion at infinity
\begin{equation}
  \label{eq:confomapinfi}
  \phi(\zeta)=C_B\zeta+\sum_{k=0}^\infty b_k \zeta^{-k}
\end{equation}
where $C_B$ is the capacity. There is a set $\{a_1,...,a_{n-2}\}\subset \mathcal C$ of auxiliary parameters
such that $\phi$ satisfies the equation
\begin{equation}
  \label{eq:confomapeq}
 \log \zeta=\int^{\phi(\zeta)}\sqrt{\frac{\prod_{j=1}^{n-2}(s-a_j)}{\prod_{j=1}^{n}(s-\omega_j)}}ds
\end{equation}
These auxiliary parameters are  the intersection points of the set of analytic arcs of $\mathcal{C}$, the limiting location set of the poles of the diagonal Pad\'e approximation $P_n[F]$  {\em for any} function $F$ having $S$ as the set of branch points, and being analytic in the complement of $\mathcal{C}$. (For example, in the infinite $n$ limit, in Figure \ref{fig:two-cc-cut} the point on the positive real axis near $\omega\approx 4.5$ would tend to the single intersection point for this configuration.)
The analytic arcs $\gamma_j$ ($\gamma'_j$, resp)  joining $a_1$ with $\omega_j, j=1,...,n-1$ ($a_1$ to $a_j, j=2,...,n-2$, resp.) are given by
\begin{equation}
  \label{eq:arcs}
  \Re\int_{\gamma_k}^{\omega}\sqrt{\frac{\prod_{j=1}^{n-2}(s-a_j)}{\prod_{j=1}^{n}(s-\omega_j)}}ds=0\ \ \text{and} \ \ \   \Re\int_{\gamma'_m}^{\omega}\sqrt{\frac{\prod_{j=1}^{n-2}(s-a_j)}{\prod_{j=1}^{n}(s-\omega_j)}}ds=0
\end{equation}
where $k=1,...,n-1$ and $m=2,...,n-2$. In cases of symmetrically distributed branch points, these integrals can be expressed in terms of elementary or elliptic functions 
 \cite{kuzmina}, and in more general cases the minimal capacitor produced by Pad\'e can be found numerically
 \cite{grassmann}. This construction benefits from physical intuition arising from the interpretation of the minimal capacitor  in terms of potential theory (see \S\ref{S:empirical}).

 \vspace{.3cm}
\noindent {\bf Acknowledgments} \\
 We thank R. Costin for numerous helpful discussions and comments. 
We also thank A. Voros for interesting discussions, and Jean \'Ecalle for detailed correspondence, and for valuable comments on an earlier draft.
This work is supported in part by the U.S. Department of Energy, Office of High Energy Physics, Award  DE-SC0010339 (GD).

\end{document}